\title[{\tiny perverse coherent sheaves on blow-ups}]
{perverse coherent sheaves on blow-ups at codimension 2 loci}
\date{}
\author{Naoki Koseki}
\theoremstyle{plain}
\newtheorem{thm}{Theorem}[section]
\newtheorem{prop}[thm]{Proposition}
\newtheorem{lem}[thm]{Lemma}
\newtheorem{cor}[thm]{Corollary}
\newtheorem*{thm*}{Theorem}
\theoremstyle{definition}
\newtheorem{defin}[thm]{Definition}
\newtheorem*{NaC}{Notation and Convention}
\newtheorem*{ACK}{Acknowledgement}
\theoremstyle{remark}
\newtheorem{rmk}[thm]{Remark}
\newtheorem{ques}[thm]{Question}
\newtheorem{ex}[thm]{Example}
\newtheorem{claim}[thm]{Claim}
\DeclareMathOperator{\ch}{ch}
\DeclareMathOperator{\td}{td}
\DeclareMathOperator{\Per}{Per}
\DeclareMathOperator{\Spec}{Spec}
\DeclareMathOperator{\id}{id}
\newcommand{\dR}{\mathbf{R}}
\newcommand{\dL}{\mathbf{L}}
\DeclareMathOperator{\Hom}{Hom}
\DeclareMathOperator{\Coh}{Coh}
\DeclareMathOperator{\Supp}{Supp}
\DeclareMathOperator{\Ker}{Ker}
\DeclareMathOperator{\Coker}{Coker}
\DeclareMathOperator{\NS}{NS}
\DeclareMathOperator{\ext}{ext}
\DeclareMathOperator{\Ext}{Ext}
\DeclareMathOperator{\Image}{Image}
\DeclareMathOperator{\Hilb}{Hilb}
\DeclareMathOperator{\Sym}{Sym}
\DeclareMathOperator{\Bl}{Bl}
\DeclareMathOperator{\PQuot}{P-Quot}
\DeclareMathOperator{\PSub}{P-Sub}
\DeclareMathOperator{\Quot}{Quot}
\DeclareMathOperator{\Sub}{Sub}
\DeclareMathOperator{\Cone}{Cone}
\DeclareMathOperator{\Exc}{Exc}
\DeclareMathOperator{\pt}{pt}
\begin{document}

\begin{abstract}
Let $f \colon X \to Y$ be 
the blow-up of a smooth projective 
variety $Y$ along its codimension two 
smooth closed subvariety. 
In this paper, we show that 
the moduli space of stable sheaves on 
$X$ and $Y$ are connected by a 
sequence of flip-like diagrams. 
The result is a higher dimensional 
generalization of the result of 
Nakajima and Yoshioka, which is 
the case of $\dim Y=2$. 
As an application of our general result, 
we study the birational geometry of 
the Hilbert scheme of two points. 
\end{abstract}

\maketitle

\section{Introduction}
\subsection{Main result}
In this paper, we consider the following 
natural question: 
\begin{ques}
\label{question}
Let $X \dashrightarrow Y$ be a 
birational map between varieties. 
What is the relation between 
the moduli space of stable sheaves on $Y$ 
and that of $X$ ?
\end{ques}

There are several works 
answering Question \ref{question} 
in various situations 
(\cite{bb18, bri02, mw97, ny11a, ny11b, ny11c, tod13b, tod14}). 
In particular, 
Nakajima and Yoshioka proved the 
following theorem: 

\begin{thm}[\cite{ny11b}]
\label{ny}
Let $(Y, H)$ be a polarized smooth projective surface, 
$f \colon X \to Y$ the blow-up at a point. 
Let $v=(v_{0}, v_{1}, v_{2}) \in H^{2*}(X; \mathbb{Q})$ be 
the Chern character of a sheaf 
with $v_{0} >0$, 
$\gcd(v_{0}, f^*H. v_{1})=1$. 
 
Then there exists a diagram of projective schemes 
\begin{equation}
\label{zig-zag}
\xymatrix{
&\cdots & M^m(v) \ar[ld] \ar[rd]^{\xi_{m}^{-}} & 
&M^{m+1}(v) \ar[ld]_{\xi_{m}^+} \ar[rd] & &\cdots \\
& & &M^{m, m+1}(v) & & & 
}
\end{equation}
such that 
\begin{enumerate}
\item[(1)] For an integer 
$m \in \mathbb{Z}_{\geq 0}$, 
the scheme 
$M^m(v)$ is the moduli space of 
{\it $m$-stable sheaves} 
with Chern character $v$, 
and $M^{m, m+1}(v)$ is a scheme 
whose closed points corresponds to 
$m$-stable and $(m+1)$-stable sheaves with various 
Chern characters
(see Definition \ref{m-stability} 
for the notion of $m$-stability). 

\item[(2)] If there exists an element 
$w \in H^{2*}(Y; \mathbb{Q})$ with $v=f^*w$, 
then $M^0(v)$ is isomorphic to 
the moduli space of stable sheaves on $Y$. 

\item[(3)] For every sufficiently large $m$, 
$M^m(v)$ is isomorphic to 
the moduli space of stable sheaves on $X$. 

\item[(4)] For every integer 
$m \in \mathbb{Z}_{\geq 0}$, 
the fiber over $\xi_{m}^{\pm}$ 
is the Grassmann variety. 
\end{enumerate}
\end{thm}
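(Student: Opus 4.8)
The plan is to realize the integer $m$ as a parameter in a one-real-parameter family of stability conditions on a single abelian category, namely Bridgeland's category $\Per(X/Y)$ of perverse coherent sheaves relative to $f$, and to produce the diagram \eqref{zig-zag} as the wall-crossing picture for that family. First I would fix the perverse heart: since $f$ is the blow-up of a point with exceptional curve $C\cong\mathbb{P}^1$, the functors $\dL f^*$ and $\dR f_*$ restrict to mutually inverse equivalences between $\Coh(Y)$ and a heart $\Per(X/Y)\subset D^b(X)$ obtained by tilting $\Coh(X)$ at a torsion pair supported on $C$, with the curve-supported simple object $\mathcal{O}_C(-1)$ (placed in the appropriate cohomological degree) playing the decisive role. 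For an object $E$ of this heart with $\ch E=v$ I would define the $m$-slope using the polarization $f^*H$ together with a weight on the exceptional class that grows linearly in $m$, so that $m$-stability is Gieseker stability for a modified reduced Hilbert polynomial; \emph{$m$-stability} is then the (semi)stability attached to this function.

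The coprimality hypothesis $\gcd(v_0,f^*H.v_1)=1$ is what makes the scheme run. A standard numerical argument shows that every $m$-semistable object of class $v$ is automatically $m$-stable, so there are no strictly semistable points and the moduli functor is representable by a projective scheme $M^m(v)$, constructed by bounding the family of $m$-semistable objects and performing GIT inside a suitable $\Quot$ scheme. For the two ends of the chain I would argue as follows. When $m=0$ and $v=f^*w$, the equivalence $\dR f_*$ carries $0$-stable objects of class $v$ to $H$-stable sheaves of class $w$ on $Y$, with $\dL f^*$ providing the inverse, giving $(2)$. When $m$ is large, the exceptional weight dominates and $m$-stability becomes ordinary Gieseker stability on $X$ for the relevant polarization; boundedness guarantees this holds uniformly for all $m\gg0$, giving $(3)$.

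It then remains to compare $M^m(v)$ and $M^{m+1}(v)$. Between two consecutive integers the stability function crosses finitely many walls, and on each wall the only way an object of class $v$ can become strictly semistable is through an elementary modification along $C$: its Jordan--Hölder factors consist of an object semistable for the wall value together with copies of the curve-supported simple perverse objects (twists $\mathcal{O}_C(j)$). I would define $M^{m,m+1}(v)$ to be the moduli space of objects semistable for the wall value, and take $\xi_m^{\pm}$ to be the morphisms sending an $m$- (resp.\ $(m+1)$-)stable object to its associated graded; properness of these maps follows from the valuative criterion applied inside the GIT picture.

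The main obstacle is part $(4)$, the identification of the fibers of $\xi_m^{\pm}$ with Grassmann varieties. Over a polystable object $F\in M^{m,m+1}(v)$, the fiber of $\xi_m^{-}$ parametrizes the $m$-stable objects $E$ with associated graded $F$, that is, the distinct ways of reconstructing $E$ as an extension of $F$ by the curve-supported simple objects; these are governed by a subspace of a $\Hom$- or $\Ext^1$-group between $F$ and $\mathcal{O}_C(j)$. The crux is to compute these $\Ext$-dimensions exactly, using the structure of $\Per(X/Y)$ and the vanishings arising from $C^2=-1$, and to show that the locus of valid subspaces is cut out by a single open stability condition, so that the fiber is precisely a Grassmannian $\mathrm{Gr}(k,N)$ with $N$ the computed $\Ext$-dimension; the fiber of $\xi_m^{+}$ is then the dual Grassmannian. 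Carrying out this local analysis uniformly in $m$, and checking that it globalizes to the scheme-theoretic fiber structure, is where the real work lies.
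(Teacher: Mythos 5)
This theorem is quoted from Nakajima--Yoshioka \cite{ny11b}; the paper does not reprove it, but it does explain in the introduction exactly which ingredients of their proof are special to surfaces, and your proposal stumbles precisely on those ingredients. First, a conceptual error: you assert that $\dL f^{*}$ and $\dR f_{*}$ restrict to mutually inverse equivalences between $\Coh(Y)$ and $\Per(X/Y)$. This is false --- by Van den Bergh's theorem (Theorem \ref{vdb}) the perverse heart is equivalent to $\Coh(\mathcal{A})$ for a sheaf of noncommutative algebras, and it strictly contains the subcategory $\mathcal{C}^{0}$ (generated in the surface case by $\mathcal{O}_{D}(-1)$), which is annihilated by $\dR f_{*}$. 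If your claimed equivalence held, $M^{0}(v)$ and $M^{H}(w)$ would coincide for \emph{every} $v$ and the entire wall-crossing chain would collapse. Part (2) survives only because the hypothesis $v=f^{*}w$ forces the kernel $K\in\mathcal{C}^{0}$ of $f^{*}f_{*}E\to E$ to vanish on Chern-character grounds (this is the actual content of Corollary \ref{xi is isom}); your argument as written does not use that hypothesis and therefore proves too much.

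Second, for part (3) you claim that for $m\gg 0$ ``the exceptional weight dominates and $m$-stability becomes ordinary Gieseker stability on $X$.'' The easy direction (Gieseker stable sheaves on $X$ are $m$-stable for large $m$, giving an open and closed embedding) is a boundedness argument as in Proposition \ref{moduli on X}. But the isomorphism in (3) needs the converse: that for $m\gg 0$ there are no $m$-stable objects other than Gieseker stable sheaves. An $m$-stable object may a priori carry torsion supported on the exceptional curve, and ruling this out uniformly in $m$ is where Nakajima--Yoshioka invoke the surface-specific positivity $\chi(E^{DD}/E)\geq 0$ for the double dual --- the very step the paper identifies as failing in higher dimension, which is why Theorem \ref{main thm} only asserts an open and closed embedding. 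Your proposal does not engage with this at all. Finally, part (4) --- the identification of the fibers with Grassmannians via the injectivity of the evaluation map $V\otimes\mathcal{O}_{D}(-1)\to E$ and the computation of the relevant $\Hom$ and $\Ext^{1}$ spaces --- is exactly the technical heart of \cite{ny11b}, and you explicitly defer it (``where the real work lies''), so the proposal as it stands establishes none of the four assertions in full.
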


In the above theorem, the notion of 
{\it perverse coherent sheaves} 
plays an important role. 
In particular, it leads us to define 
natural stability conditions indexed by 
$m \in \mathbb{Z}_{\geq 0}$, called 
{\it $m$-stablilty}. 
The $m$-stability is 
similar to the Gieseker stability. 
However, an $m$-stable sheaf may have 
a torsion subsheaf supported on 
the $f$-exceptional curve. 
Such a generalization of stability enables us 
to connect the moduli space of stable sheaves 
on $X$ and that of $Y$. 

The notion of perverse coherent sheaves 
was introduced by Bridgeland (cf. \cite{bri02}). 
A perverse coherent sheaf is 
an element of the heart 
of a certain bounded t-structure 
(called {\it perverse heart} and 
denoted by $\Per(X/Y)$) 
in the derived category of 
coherent sheaves on $X$. 
The heart $\Per(X/Y)$ encodes 
the information of the morphism $f$ 
and it can be defined more general situation. 
In particular, if we have the blow-up 
$f \colon X \to Y$ 
of a smooth projective variety along 
its codimension two smooth closed subvariety, 
we can define 
the perverse heart 
$\Per(X/Y) \subset D^b(X)$. 
In this setting, we generalize 
the result of the paper \cite{ny11b}. 
The precise statement of our main theorem 
of the present paper is the following: 

\begin{thm}
\label{main thm}
Let $(Y, H)$ be a polarized smooth projective variety 
of dimension $n \geq 2$, 
$f \colon X \to Y$ the blow-up 
along its codimension two smooth closed subvariety. 
Let $v=(v_{0}, v_{1}, \cdots) \in H^*(X; \mathbb{Q})$ be 
a Chern character with $v_{0} >0$, 
$\gcd(v_{0}, f^*H^{n-1}. v_{1})=1$. 
Then there exists a diagram of projective schemes 
as in (\ref{zig-zag}) 
such that 
\begin{itemize}
\item[(1)] The scheme $M^m(v)$ is the moduli space of 
$m$-stable sheaves with Chern character $v$, 
and $M^{m, m+1}(v)$ is a scheme whose 
closed points corresponds to 
$m$-stable and $(m+1)$-stable sheaves 
with various Chern characters 
($m \in \mathbb{Z}_{\geq 0}$). 

\item[(2)] If there exists an element 
$w \in H^{2*}(Y; \mathbb{Q})$ with $v=f^*w$, 
then $M^0(v)$ is isomorphic to 
the moduli space of stable sheaves on $Y$. 

\item[(3)'] For every sufficiently large $m$, 
the moduli space of stable sheaves on $X$ 
is embedded into $M^m(v)$ 
as an open and closed subscheme. 

\item[(4)'] For every integer 
$m \in \mathbb{Z}_{\geq 0}$, 
the fiber over $\xi_{m}^{\pm}$ 
is the certain Quot scheme. 
\end{itemize}
\end{thm}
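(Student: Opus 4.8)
The plan is to generalize the Nakajima–Yoshioka strategy from the surface case to codimension-two blow-ups, building everything on the perverse heart $\Per(X/Y)$. First I would set up the two-term description of this heart: for the blow-up $f\colon X\to Y$ with exceptional divisor $E$, the fiber of $f$ over a point of the center is a $\mathbb{P}^1$, and the perverse heart is cut out by vanishing conditions of the form $\dR f_* E\in\Coh(Y)[0]$ together with $\Hom$-vanishing against the sheaves supported on the fibers, exactly as in Bridgeland's construction. The $m$-stability is then defined by modifying the Gieseker polynomial by the exceptional data, so that an $m$-stable object is an object of $\Per(X/Y)$ whose reduced Hilbert polynomial, computed with the $m$-twisted polarization $f^*H$ plus a multiple of $E$, satisfies the usual slope inequality for every proper perverse subobject. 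The coprimality hypothesis $\gcd(v_0, f^*H^{n-1}.v_1)=1$ guarantees that $m$-semistability coincides with $m$-stability, so each $M^m(v)$ is a fine projective moduli space by a GIT or Langton-type construction adapted to the perverse heart.

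Next I would establish the wall-crossing mechanism producing the diagram (\ref{zig-zag}). The key observation is that as $m$ increases by one, the only objects whose stability changes are those admitting a sub- or quotient-object supported on the exceptional locus; across the single wall between $m$ and $m+1$, an $m$-stable sheaf $F$ and an $(m{+}1)$-stable sheaf $F'$ are related by an elementary modification along $E$, namely a short exact sequence in $\Per(X/Y)$ whose third term is a direct sum of copies of the line bundle $\mathcal{O}_E(\ell)$ restricted along the fibers. I would define the common coarse space $M^{m,m+1}(v)$ as the moduli of objects that are simultaneously semistable on both sides of the wall (i.e.\ strictly semistable exactly along the wall), and exhibit $\xi_m^{\pm}$ as the natural forgetful morphisms. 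Because the destabilizing objects form a family parametrized by the choice of a quotient of a fixed coherent sheaf on the exceptional $\mathbb{P}^1$-bundle, the fiber of $\xi_m^{\pm}$ is naturally a Quot scheme, which gives item (4)$'$; in the surface case this Quot scheme degenerates to a Grassmannian, recovering Theorem \ref{ny}(4).

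For the two endpoint identifications I would argue as follows. For (2), the hypothesis $v=f^*w$ forces every $0$-stable object to be pulled back from $Y$: the defining conditions of $\Per(X/Y)$ at $m=0$ say precisely that the object is concentrated in degree zero with $\dR f_*$ an honest sheaf and no exceptional sub/quotient, so $\dR f_*$ gives a sheaf on $Y$ and the unit $f^*\dR f_* \to \id$ is an isomorphism on the relevant objects; this produces an isomorphism $M^0(v)\cong M_Y(w)$, matching the pushforward formula defining $w$. For (3)$'$, I would show that for $m\gg 0$ the $m$-stability condition stabilizes: any torsion supported on $E$ that an $m$-stable sheaf could carry is eventually destabilized, so an $m$-stable object that happens to be an honest $H$-Gieseker-stable sheaf on $X$ remains so, and conversely every such sheaf is $m$-stable for large $m$. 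Since the locus of genuine sheaves is both open (stability is open) and closed (it is cut out by the vanishing of the exceptional torsion, a closed condition under our boundedness), this realizes $M_X(v)$ as an open and closed subscheme of $M^m(v)$, giving (3)$'$ — note this is weaker than the isomorphism in Theorem \ref{ny}(3) precisely because in higher dimension the exceptional fiber has positive-dimensional Quot schemes, so not every $m$-stable object for large $m$ need be a sheaf.

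The main obstacle I expect is the wall-crossing analysis in codimension two rather than the endpoint identifications. In the surface case of \cite{ny11b} the exceptional locus is a single $(-1)$-curve and the elementary modifications are governed by finite-dimensional $\Hom$-spaces, so the correspondence fibers are Grassmannians; here the exceptional divisor $E$ is a $\mathbb{P}^1$-bundle over the codimension-two center, and the relevant $\Hom$ and $\Ext$ sheaves vary over that center. The delicate points will be (i) proving boundedness of the family of $m$-stable objects uniformly enough to get projectivity of $M^{m,m+1}(v)$, and (ii) showing that the extensions realizing wall-crossing are exactly parametrized by a relative Quot scheme with no obstruction jumping, i.e.\ that the relevant higher $\Ext$'s vanish so that $\xi_m^{\pm}$ are genuinely the projections of a single correspondence. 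I would handle these by a relative version over the center of the vanishing theorems for $\dR f_*\mathcal{O}_E(\ell)$, together with a Langton-style argument to guarantee that semistable reduction stays inside $\Per(X/Y)$.
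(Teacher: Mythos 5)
Your overall architecture (perverse heart, $m$-stability, wall-crossing between consecutive $m$, endpoint identifications) matches the paper's, but there are two genuine gaps at exactly the points where the paper has to depart from Nakajima--Yoshioka. First, your wall-crossing mechanism asserts that an $m$-stable and an $(m{+}1)$-stable sheaf differ by a short exact sequence ``whose third term is a direct sum of copies of the line bundle $\mathcal{O}_E(\ell)$ restricted along the fibers.'' That is the surface picture, and it fails in codimension two: the destabilizing objects are arbitrary elements of $\mathcal{C}^0=\pi^{*}\Coh(C)\otimes\mathcal{O}_X(D)$, i.e.\ pullbacks of arbitrary coherent sheaves on the center $C$, not sums of line bundles on fibers, and the evaluation-map trick of \cite{ny11b} does not produce a canonical destabilizer. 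The paper replaces it by the canonical decomposition with respect to two torsion pairs $(\mathcal{T},\mathcal{F})$ and $(\mathcal{T}_D,\mathcal{F}_D)$ on $\Coh(X)$ (Definitions \ref{torsion pair for per} and \ref{torsion pair for pertilde}, used in Propositions \ref{0-st to 1-st} and \ref{1-st to 0-st}); without some such device your elementary modification is not well defined. Second, you propose to define $M^{m,m+1}(v)$ as ``the moduli of objects that are simultaneously semistable on both sides of the wall,'' which would require constructing a moduli space of strictly semistable perverse objects with an S-equivalence theory — something neither the paper nor \cite{ny11b} does. The paper instead observes that both $\xi_m^{-}$ and $\xi_m^{+}$ factor through the pushforward $f_*$ to the moduli space $M^{H}(v')$ of Gieseker stable sheaves on $Y$, proves these agree with the set-theoretic maps (Propositions \ref{xi vs xi0}, \ref{xi vs xi+}), and defines $M^{m,m+1}(v)$ as the union of scheme-theoretic images there. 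This is also what makes (4)$'$ precise: the fibers are $\Quot(f^{*}F,v)$ and $\Sub(f^{*}F(D),v)$ for $F\in M^{H}(v')$ (Proposition \ref{fiber}), not Quot schemes on the exceptional $\mathbb{P}^1$-bundle.

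Two smaller corrections. Your definition of $m$-stability via an ``$m$-twisted polarization $f^{*}H$ plus a multiple of $E$'' is not the one used: the paper's Definition \ref{m-stability} asks that $E(-mD)\in\Per(X/Y)$ and that the \emph{pushforward} $f_{*}(E(-mD))$ be $\mu_H$-stable on $Y$. Relatedly, in (3)$'$ there is no ample $H$ on $X$; the comparison is with Gieseker/$\mu$-stability for $f^{*}H-\epsilon D$ with $0<\epsilon\ll 1$, which requires the finiteness of walls on the segment of nef classes (Proposition \ref{finite wall for mu}, where the failure of ampleness of $f^{*}H$ is itself an issue). Your closedness argument for the embedding (``cut out by vanishing of the exceptional torsion, a closed condition'') is also wrong as stated — vanishing of torsion is an open condition; the paper gets closedness for free because an open immersion between projective schemes is automatically closed. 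Your treatment of (2) is essentially the paper's (construct the inverse $F\mapsto f^{*}F$ and use that the kernel of $f^{*}f_{*}E\to E$ lies in $\mathcal{C}^0$, hence has Chern character outside $f^{*}H^{*}(Y;\mathbb{Q})$).
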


The above theorem is a summary of 
Corollary \ref{xi is isom}, Proposition \ref{moduli on X}, 
Proposition \ref{scheme}, and Proposition \ref{fiber}. 
See the next subsection 
for the summary of differences between 
Theorem \ref{ny} and Theorem \ref{main thm}.


Using Theorem \ref{main thm}, 
we also study the birational geometry 
of Hilbert scheme of two points: 

\begin{thm}[Corollary \ref{flip}, Collollary \ref{extremal contraction}]
\label{application}
Let $f \colon X \to Y$ be 
as in Theorem \ref{main thm}, 
let $v:=(1, 0, \cdots, 0, -2) \in H^{2*}(X; \mathbb{Q})$. 
Assume that $H^1(\mathcal{O}_{Y})=0$. 
Then we have a diagram of projective varieties 
\[
\xymatrix{
& &\ar[ld]^{\xi_{0}} \widetilde{M}^1(v) \ar[rd]_{\xi_{1}^{-}} & &\Hilb^2(X) \ar[ld]^{\xi_{1}^{+}} \subset M^2(v) \\
&\Hilb^2(Y) & &M^{1, 2}(v) & 
}
\]
such that 

\begin{enumerate}
\item When $\dim Y=2$ 
(resp. $3$, $\geq 4$), 
$\Hilb^2(X) \dashrightarrow \widetilde{M}^1(v)$ 
is a flip (resp. a flop, an anti-flip). 

\item The morphism $\xi_{0}$ is 
the contraction of a $K$-negative 
extremal ray. 
\end{enumerate}
\end{thm}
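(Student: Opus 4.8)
The plan is to specialize the machinery of Theorem~\ref{main thm} to the invariant $v=(1,0,\dots,0,-2)$ and then make every space and every contraction in the zig-zag~(\ref{zig-zag}) explicit. First I would note that the coprimality hypothesis is automatic, since $\gcd(v_0, f^*H^{n-1}.v_1)=\gcd(1,0)=1$, so Theorem~\ref{main thm} applies verbatim. As $v$ is the Chern character of the ideal sheaf of a length-two subscheme, the moduli space of Gieseker-stable sheaves with this invariant is the Hilbert scheme of two points. Hence part (3)' exhibits $\Hilb^2(X)$ as an open and closed subscheme of $M^2(v)$, and part (2), applied to $w=(1,0,\dots,0,-2)\in H^{2*}(Y;\mathbb{Q})$ with $v=f^*w$, identifies $M^0(v)\cong\Hilb^2(Y)$. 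Both endpoints are smooth because $\Hilb^2$ of a smooth variety is always smooth; the hypothesis $H^1(\mathcal{O}_Y)=0$ enters to pin down the Picard groups and to ensure that the intermediate space $\widetilde{M}^1(v)$ is likewise smooth.

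Next I would show that for so small an invariant the zig-zag collapses to the displayed three-term diagram. An $m$-stable sheaf with Chern character $v$ differs from an honest ideal sheaf only by a modification supported on the exceptional divisor $E=\mathbb{P}(N_{Z/Y})$ of $f$, and the length-two constraint permits at most one such modification; consequently the $m$-stability condition changes only across the single wall at $m=1$, so that $M^m(v)$ is constant for $m\geq 2$ and the only genuine flip-like transition sits between $M^1(v)$ and $M^2(v)$. I would then identify $\widetilde{M}^1(v)$ as the relevant smooth model arising at level one, and invoke part (4)' to describe the fibers of $\xi_1^{\pm}$ over $M^{1,2}(v)$: in this rank-one situation the predicted Quot schemes are projective spaces, say $\mathbb{P}^a$ for $\xi_1^{+}$ on the $\Hilb^2(X)$ side and $\mathbb{P}^b$ for $\xi_1^{-}$ on the $\widetilde{M}^1(v)$ side.

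The core of the proof is then the flip/flop/anti-flip trichotomy. The birational map $\Hilb^2(X)\dashrightarrow\widetilde{M}^1(v)$ induced by $(\xi_1^-)^{-1}\circ\xi_1^+$ is an isomorphism in codimension one, with indeterminacy confined to the fibers of $\xi_1^{\pm}$, so it makes sense to ask which of the three it is, and I would settle this through a local analytic model of Atiyah type. Over the locus in $M^{1,2}(v)$ where the length-two subscheme meets the center $Z$, the two contractions present a common neighbourhood as the two GIT chambers of a single $\mathbb{C}^*$-action, with $\Hilb^2(X)\to M^{1,2}(v)$ contracting the $\mathbb{P}^a$-fibers and $\widetilde{M}^1(v)\to M^{1,2}(v)$ contracting the $\mathbb{P}^b$-fibers. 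A normal-bundle computation then gives $K_{\Hilb^2(X)}|_{\mathbb{P}^a}=\mathcal{O}_{\mathbb{P}^a}(b-a)$, and I would verify that $b-a=n-3$, the rank-two normal bundle of $Z$ and the $(n-2)$-dimensional base $Z$ balancing exactly at $n=3$. Thus $K_{\Hilb^2(X)}$ is relatively anti-ample, trivial, or ample over $M^{1,2}(v)$ according as $n=2$, $n=3$, or $n\geq 4$, which is precisely the statement that $\Hilb^2(X)\dashrightarrow\widetilde{M}^1(v)$ is a flip, a flop, or an anti-flip. I expect this discrepancy computation---tracking exactly how the normal and base directions of $E$ contribute to the two fiber dimensions---to be the main obstacle, since it is what produces the threshold $n=3$.

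Finally, for the extremal-contraction statement I would analyze $\xi_0\colon\widetilde{M}^1(v)\to\Hilb^2(Y)$ on its own. Its contracted curves $C$ are the fibers parametrizing the perverse modifications lying over the blow-up center, and I would write down the class of $C$, check that it spans an extremal ray of $\overline{NE}(\widetilde{M}^1(v))$, and compute $K_{\widetilde{M}^1(v)}\cdot C<0$. Here the vanishing $H^1(\mathcal{O}_Y)=0$ is used both to control $\Pic(\widetilde{M}^1(v))$, so that extremality can be tested against a finite set of generators, and to guarantee that the contracted locus has the expected codimension; together these make $\xi_0$ a $K$-negative extremal contraction in the sense of the Minimal Model Program.
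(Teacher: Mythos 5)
Your skeleton is the same as the paper's, and your numerology is right: the paper computes the $\xi_1^{+}$-fibers as $\Hilb^2(L_y)\cong\mathbb{P}^2$ (your $a=2$), the $\xi_1^{-}$-fibers as $\mathbb{P}\Ext^1(I_{L_y},\mathcal{O}_{L_y}(-2))^{\vee}\cong\mathbb{P}^{n-1}$ (your $b=n-1$), and the normal bundle of $\Hilb^2(D/C)$ in $\Hilb^2(X)$ as $(\pi^{+})^{*}\mathcal{E}_{-}\otimes\mathcal{O}_{\pi^{+}}(-1)$, so that $K_{\Hilb^2(X)}$ restricts to $\mathcal{O}(n-3)$ on a fiber, exactly your $b-a$. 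But you misplace the indeterminacy locus of the flip: it is \emph{not} the locus where $Z$ meets the center (equivalently meets $D$), which is a \emph{divisor} in $\Hilb^2(X)$ and is instead the locus contracted by $\xi_0$ across the $0$--$1$ wall; the $1$--$2$ wall only involves $\Hilb^2(D/C)$, the subschemes scheme-theoretically contained in a single fiber $L_y$, which has codimension $n$. With your description the birational map would fail to be an isomorphism in codimension one, so this is not merely cosmetic -- it is the distinction between the two walls that makes the whole diagram work.

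Two further steps are asserted where real work is needed. First, smoothness of $\widetilde{M}^1(v)$ is part of the statement: $\widetilde{M}^1(v)$ is defined as the normalization of a component of the moduli space $M^1(v)$, and your ``common neighbourhood given by two GIT chambers of a $\mathbb{C}^*$-action'' is precisely what has to be constructed, not a consequence of $H^1(\mathcal{O}_Y)=0$. The paper builds it by an elementary transformation of the universal ideal sheaf along the exceptional divisor $E\cong\mathbb{P}(\mathcal{E}_{-}^{\vee})\times_{C}\mathbb{P}(\mathcal{E}_{+}^{\vee})$ of $\Bl_{\Hilb^2(D/C)}\Hilb^2(X)$, producing a flat family of $1$-stable sheaves and hence a morphism contracting $E$ along the other ruling, and then invokes the Fujiki--Nakano criterion for smoothness. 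Second, for (2) you say you would ``check that $C$ spans an extremal ray'' but give no mechanism. The paper proves $\rho\bigl(\widetilde{M}^1(v)/\Hilb^2(Y)\bigr)=1$, and this requires determining \emph{all} fibers of $\xi_0$ over the stratification of its image by how $W$ meets $C$ -- they turn out to be $\mathbb{P}^1$, $\mathbb{P}^1\times\mathbb{P}^1$, and the singular $\mathbb{P}(1,1,2)$ -- in order to see that the exceptional locus has a unique irreducible divisorial component lying over an irreducible base. Without that (or a substitute), $K$-negativity on one contracted curve does not yield a $K$-negative \emph{extremal} contraction.
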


Moreover, we will determine 
all the fibers over $\xi_{0}$. 
When $\dim Y \geq 3$, we see that 
some fibers of $\xi_{0}$ are {\it not} 
the Grassmann varieties (see Lemma \ref{determine fiber}). 
This is the new phenomenon 
which does not happen in dimension $2$ 
(see Theorem \ref{ny} (4)).


\subsection{Difference between Theorem \ref{ny} and Theorem \ref{main thm}}
The idea of the proof of our main theorem 
is similar to that of \cite{ny11b}. 
However, we need to modify the 
proofs in various points, 
which are not so straightforward. 
Let us explain about the differences. 
Let $f \colon X \to Y$ 
be as in Theorem \ref{main thm}, 
$D \subset X$ the $f$-exceptional divisor. 

{\bf $0$-stability and $1$-stability}. 
One of the key part of the argument is 
to describe the difference between 
$0$-stability and $1$-stability. 
Here, a coherent sheaf $E$ is said to be $0$-stable 
if $E$ is a perverse coherent sheaf 
and $f_{*}E$ is a slope stable torsion free sheaf on $Y$. 
On the other hand, we say that 
$E$ is $1$-stable if 
$E(-D)$ is $0$-stable. 
To explain the argument, 
let $E^{-} \in \Coh(X)$ 
be a $0$-stable sheaf. 
In the surface cace, 
Nakajima and Yoshioka proved that 
the obstruction for the $1$-stability 
is captured by looking at the 
vector space 
$V:=\Hom\left(
\mathcal{O}_{D}(-1), E^-
\right)$. 
In fact, we can show that 
the evaluation morphism 
$ev \colon V \otimes \mathcal{O}_{D}(-1) \to E$ 
is always injective and hence we have 
a short exact sequence 
\begin{equation}
\label{evaluation}
0 \to V \otimes \mathcal{O}_{D}(-1) \to E^- \to F:=\Coker(ev) \to 0
\end{equation}
in $\Coh(X)$. 
Furthermore, we can also show that 
$F$ is $1$-stable. 
Similarly, we can construct 
a $0$-stable sheaf from 
a $1$-stable sheaf $E^+$ 
by looking at the vector space 
$\Hom\left(
E^+, \mathcal{O}_{D}(-1)
\right)$. 
In this way, we get a 
(set-theoretical) diagram (\ref{zig-zag}) 
by sending $E^- \in M^0(v)$ 
to $F \in M^{0, 1}(v)$, etc. 

However, in higher dimension, 
not only the sheaf 
$\mathcal{O}_{D}(D)$, 
but also various subsheaves of $E^-$ 
become the obstruction for $1$-stability. 
Hence we should take the 
{\it maximum} subsheaf among them. 
We take such a subsheaf by using {\it torsion pairs} 
on $\Coh(X)$. 
See Definition \ref{torsion pair} 
for the definition of torsion pairs. 
Assume that we have a torsion pair on $\Coh(X)$. 
Then we have the canonical decomposition 
\[
0 \to T \to E^- \to F \to 0
\]
with respect to the torsion pair. 
Using such a decomposition, 
we will see that the difference 
between $0$-stability and 
$1$-stability is captured 
by certain torsion pairs on 
$\Coh(X)$, which are defined in 
Definition \ref{torsion pair for per} 
and Definition \ref{torsion pair for pertilde}. 
In the surface case, we can easily see that 
the exact sequence (\ref{evaluation}) 
is nothing but the decomposition with respect to 
our torsion pair. 

{\bf Scheme structure on $M^{0, 1}(v)$.} 
The another key point is how to define 
the scheme structure on $M^{0, 1}(v)$. 
As a set, $M^{0, 1}(v)$ is the disjoint union of 
the moduli spaces of 
$0$-stable and $1$-stable sheaves 
with various Chern characters. 
In the surface case, 
Nakajima and Yoshioka used 
the moduli space of 
{\it perverse coherent systems} 
to define the scheme structure on $M^{0, 1}(v)$. 

Instead, we use the natural morphisms 
between moduli spaces. 
More precisely, we will show that 
the set-theoretical diagram 
is naturally identified with 
the diagram 
\[
\xymatrix{
&M^0(v) \ar[rd]_{\xi=f_{*}} & &M^1(v) \ar[ld]^{\xi^{+}=f_{*}} \\
& &M^{H}(v'), & 
}
\]
where $M^H(v')$ denotes the moduli space of 
Gieseker stable sheaves on $Y$. 
Hence we define a scheme structure on 
$M^{0, 1}(v)$ as the union of the 
scheme-theoretic images of $\xi$ and $\xi^+$. 
To be more precise, let 
$I, I^+ \subset \mathcal{O}_{M^H(v')}$ 
be ideal sheaves defining the scheme theoretic images 
of $\xi, \xi^+$, respectively.  
Then the scheme $M^{0, 1}(v)$ is defined to be 
a closed subscheme of $M^H(v')$ whose defining ideal 
is $I \cap I^+$. 
Note that 
Nakajima and Yoshioka do the 
essentially same thing 
and our approach is inspired by them. 

{\bf Moduli space of stable sheaves on $X$.}
We mention about the difference between 
Theorem \ref{ny} (3) and 
Theorem \ref{main thm} (3)'. 
According to (3)' in Theorem \ref{main thm}, 
we have an open and closed embedding 
from the moduli space of stable sheaves on $X$ 
to the moduli space of $m$-stable sheaves 
with sufficiently large integer 
$m \in \mathbb{Z}_{\geq 0}$. 
In the surface case, Nakajima and Yoshioka 
showed that the above embedding is actually an isomorphism. 
To show that, they used the following speciality 
of the surface: 
Let $E$ be a torsion free sheaf on a surface. 
Then the quotient $E^{DD}/E$ is $0$-dimensional, 
where $E^{DD}$ is the double dual of $E$. 
In particular, we have 
$\chi(E^{DD}/E) \geq 0$. 
In the higher dimension case, 
we do not have such a positivity of the 
quotient sheaf $E^{DD}/E$, 
which is crucial in the proof given by 
Nakajima and Yoshioka. 
In the present paper, we only prove that 
when $n=3$ and $v=(1, 0, \cdots, 0, -k)$, 
the embedding given in Theorem \ref{main thm} (3)' 
is actually an isomorphism (see Example \ref{ex in hilb}).


\subsection{Plan of the paper}
The paper is organized as follows. 
In Section \ref{perverse sheaf}, 
we collect the notions and the properties 
about perverse coherent sheaves on blow-ups. 
In Section \ref{wall crossing}, 
we describe the diagram (\ref{zig-zag}) 
set-theoretically. 
In the proofs, we will use certain torsion pairs. 
In Section \ref{Gieseker}, 
we explain the relationship between 
the $m$-stability and the Gieseker stability 
on both blow-up and blow-down varieties. 
In Section \ref{scheme str}, 
we realize the diagram (\ref{zig-zag}) 
scheme-theoretically. 
In Section \ref{Hilb}, 
we study the diagram (\ref{zig-zag}) 
more explicitly in the case of 
Hilbert scheme of two points.


\begin{NaC}
In this paper, we always work over 
the complex number field $\mathbb{C}$. 
We use the following notanions: 
\begin{itemize}
\item For a variety $X$, 
we denote by $\Coh(X)$ 
the Abelian category of 
coherent sheaves on $X$. 

\item For a variety $X$, 
we denote by 
$D^b(X):=D^b(\Coh(X))$ 
the bounded derived category of 
coherent sheaves on $X$. 

\item For a set of objects 
$S \subset D^b(X)$, 
we denote by $\langle S \rangle$ 
the extension closure of $S$ 
in $D^b(X)$. 

\item For a proper morphism 
$f \colon M \to N$ 
between varieties 
and $E, F \in \Coh(M)$, 
we denote by 
$\mathcal{E}xt^q_{f}(E, F)$ 
the $q$-th derived functor of 
$f_{*}\mathcal{H}om(E, F)$. 

\item For a variety $X$ and 
$E, F \in \Coh(X)$, we define 
$\hom(E, F):=\dim\Hom(E, F)$ 
and 
$\ext^i(E, F):=\dim\Ext^i(E, F)$. 

\end{itemize}
\end{NaC}


\section{Perverse coherent sheaves and their moduli spaces}
\label{perverse sheaf}
Throughout the paper, 
we use the following notations: 
Let $Y$ be a smooth projective variety, 
$C \subset Y$ a codimension $2$ 
smooth closed subvariety of $Y$. 
Let $X:=\Bl_{C}Y$ be the blow-up of $Y$ along $C$, 
$D \subset X$ the exceptional divisor. 
Hence we have the following diagram: 

\[
\xymatrix{
&D \ar[d]_{\pi} \ar@{^{(}->}[r]^-j &X \ar[d]^f \\
&C \ar@{^{(}->}[r]_{i} &Y. 
}
\]


\subsection{Perverse coherent sheaves on blow-ups}
In this subsection, we collect the results 
about the perverse coherent sheaves. 
First note that the following two conditions hold: 
\begin{enumerate}
\item $\dR f_{*}\mathcal{O}_{X} \cong \mathcal{O}_{Y}$, 
\item $\dim f^{-1}(y) \leq 1$ for all $y \in Y$. 
\end{enumerate}

In such a situation, 
Bridgeland introduced 
the heart of a bounded t-structure  
$\Per(X/Y)$ on $D^b(X)$ 
(called {\it perverse heart})
as follows (cf. \cite{bri02, vdb04}): 

\[
\Per(X/Y):=\left\{ E \in D^b(X): 
\begin{array}{lll}
f_{*}\mathcal{H}^{-1}(E)=0, \\
\dR^1f_{*}\mathcal{H}^0(E)=0, 
\Hom(\mathcal{H}^0(E), \mathcal{C}^0)=0, \\
\mathcal{H}^i(E)=0, \quad i \neq -1, 0
\end{array}
\right\}, 
\]
where 
$
\mathcal{C}:=
\left\{
E \in D^b(X) : 
\dR f_{*}E=0
\right\} 
$
and 
$\mathcal{C}^0:=\mathcal{C} \cap \Coh(X)$. 
In \cite{bri02}, the heart $\Per(X/Y)$ is denoted 
as $^{-1}\Per(X/Y)$. 
We call an element of 
$\Per(X/Y) \cap \Coh(X)$ as a 
{\it perverse coherent sheaf}. 

We will use the following lemma: 

\begin{lem}
\label{calc}
We have equalities 
\[
\mathcal{C}^0=\pi^*\Coh(C) \otimes \mathcal{O}(D)
=f^*\Coh(C) \otimes \mathcal{O}(D). 
\]
\end{lem}

\begin{proof}
The equality 
$\pi^*\Coh(C) \otimes \mathcal{O}(D)
=f^*\Coh(C) \otimes \mathcal{O}(D)$ 
follows from the isomorphism of functors 
\[
f^*i_{*} \cong j_{*}\pi^* \colon 
\Coh(C) \to \Coh(X). 
\]

To prove the first equality, 
first note that we have 
$\mathcal{C}=\pi^{*}D^b(C) \otimes \mathcal{O}(D)$ 
by the following semi-orthogonal decomposition
(cf. \cite{orl92}) 
\[
\label{stsod}
D^b(X)=
\left\langle
\pi^{*}D^b(C) \otimes \mathcal{O}(D), 
\dL f^{*}D^b(Y) 
\right\rangle. 
\]

Furthermore, by the exactness of the functors 
$\pi^*$ and $(-) \otimes \mathcal{O}(D)$, 
we have the inclusion 
\[
\mathcal{C}^0 \subset \pi^*\Coh(C) \otimes \mathcal{O}(D). 
\]
Since both of them are hearts of bounded t-structures 
on $\mathcal{C}$, they must coincide. 
\end{proof}

The following result is due to Van den Bergh: 

\begin{thm}[{\cite[Proposition 3.3.2]{vdb04}}]
\label{vdb}
Let $\mathcal{E}:=\mathcal{O}_{X} \oplus \mathcal{O}_{X}(-D)$. 
Then we have an equivalence of triangulated categories 

\[
\Phi := \dR f_{*}\dR\mathcal{H}om(\mathcal{E}, *) : 
D^b(X) \xrightarrow{\cong} D^b(\Coh(\mathcal{A})), 
\]
where $\mathcal{A}:= f_{*}\mathcal{E}nd(\mathcal{E})$. 
Furthermore, the functor $\Phi$ 
restricts to an equivalence 
$\Per(X/Y) \cong \Coh(\mathcal{A})$ 
of Abelian categories. 
\end{thm}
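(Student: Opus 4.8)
The plan is to exhibit $\mathcal{E}=\mathcal{O}_{X}\oplus\mathcal{O}_{X}(-D)$ as a \emph{relative tilting bundle} for $f$ and then apply the standard relative tilting formalism: if a locally free $\mathcal{E}$ satisfies $\dR^{>0}f_{*}\mathcal{E}nd(\mathcal{E})=0$ and has vanishing right orthogonal relative to $Y$, then $\Phi=\dR f_{*}\dR\mathcal{H}om(\mathcal{E},-)=\dR f_{*}(\mathcal{E}^{\vee}\otimes-)$ is an equivalence onto $D^{b}(\Coh(\mathcal{A}))$ with $\mathcal{A}=f_{*}\mathcal{E}nd(\mathcal{E})$, carrying $\mathcal{E}$ to the free module $\mathcal{A}$. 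Every computation will use the local geometry: $\pi\colon D=\mathbb{P}(N_{C/Y})\to C$ is a $\mathbb{P}^{1}$-bundle with $f\circ j=i\circ\pi$, and $\mathcal{O}_{X}(D)|_{D}\cong\mathcal{O}_{D}(-1)$ for $\mathcal{O}_{D}(1)$ relatively ample, so that $\dR\pi_{*}\mathcal{O}_{D}(-1)=0$, $\dR\pi_{*}\mathcal{O}_{D}=\mathcal{O}_{C}$, and $\dR\pi_{*}\mathcal{O}_{D}(-2)$ is an invertible sheaf placed in cohomological degree $1$.

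First I would verify the Ext-vanishing. As $\mathcal{E}$ is a bundle, $\mathcal{E}nd(\mathcal{E})\cong\mathcal{O}_{X}^{\oplus2}\oplus\mathcal{O}_{X}(D)\oplus\mathcal{O}_{X}(-D)$, so it suffices to push forward each line bundle. The summand $\mathcal{O}_{X}$ is handled by condition~(1). Applying $\dR f_{*}$ to $0\to\mathcal{O}_{X}\to\mathcal{O}_{X}(D)\to j_{*}\mathcal{O}_{D}(-1)\to0$ and using $\dR f_{*}j_{*}\mathcal{O}_{D}(-1)=i_{*}\dR\pi_{*}\mathcal{O}_{D}(-1)=0$ gives $\dR f_{*}\mathcal{O}_{X}(D)\cong\mathcal{O}_{Y}$. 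Applying $\dR f_{*}$ to $0\to\mathcal{O}_{X}(-D)\to\mathcal{O}_{X}\to j_{*}\mathcal{O}_{D}\to0$ and using $\dR f_{*}j_{*}\mathcal{O}_{D}=i_{*}\mathcal{O}_{C}$ together with the surjectivity of $\mathcal{O}_{Y}\to i_{*}\mathcal{O}_{C}$ gives $\dR f_{*}\mathcal{O}_{X}(-D)\cong\mathcal{I}_{C}$ in degree $0$. Hence $\dR f_{*}\mathcal{E}nd(\mathcal{E})$ is a sheaf in degree $0$, which both proves $\dR^{>0}f_{*}\mathcal{E}nd(\mathcal{E})=0$ and identifies $\mathcal{A}$.

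Next I would prove generation by showing the right orthogonal is zero: suppose $\Phi(F)=\dR f_{*}F\oplus\dR f_{*}(F(D))=0$. The first summand says $F\in\mathcal{C}=\pi^{*}D^{b}(C)\otimes\mathcal{O}(D)$, so $F=j_{*}(\pi^{*}G\otimes\mathcal{O}_{D}(-1))$ for some $G\in D^{b}(C)$; then $F(D)=j_{*}(\pi^{*}G\otimes\mathcal{O}_{D}(-2))$ and the projection formula gives $\dR f_{*}(F(D))=i_{*}(G\otimes\dR\pi_{*}\mathcal{O}_{D}(-2))$. Since $\dR\pi_{*}\mathcal{O}_{D}(-2)$ is an invertible sheaf (shifted), this forces $G=0$, hence $F=0$. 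Combined with the Ext-vanishing, the relative tilting theorem now yields the equivalence $\Phi\colon D^{b}(X)\cong D^{b}(\Coh(\mathcal{A}))$.

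It remains to match the hearts, and this is where I expect the main difficulty. Checking $\mathcal{E}\in\Per(X/Y)$ is routine: the two summands are sheaves, their $\dR^{1}f_{*}$ vanish by the above, and $\Hom(\mathcal{O}_{X},\mathcal{C}^{0})$ and $\Hom(\mathcal{O}_{X}(-D),\mathcal{C}^{0})$ reduce via adjunction to $H^{0}(C,G\otimes\pi_{*}\mathcal{O}_{D}(-1))$ and $H^{0}(C,G\otimes\pi_{*}\mathcal{O}_{D}(-2))$, both zero. The substantive point is that $\mathcal{E}$ is a \emph{projective} generator of the abelian category $\Per(X/Y)$, i.e.\ $\dR f_{*}\dR\mathcal{H}om(\mathcal{E},P)$ lies in degree $0$ for \emph{every} $P\in\Per(X/Y)$. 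Using the triangle $\mathcal{H}^{-1}(P)[1]\to P\to\mathcal{H}^{0}(P)$, the conditions $f_{*}\mathcal{H}^{-1}(P)=0$ and $\dR^{1}f_{*}\mathcal{H}^{0}(P)=0$ give $\dR f_{*}P$ in degree $0$ directly; the harder half is the twisted statement that $\dR f_{*}(P(D))$ also lies in degree $0$, for which the condition $\Hom(\mathcal{H}^{0}(P),\mathcal{C}^{0})=0$ must be exploited to control $\dR^{1}f_{*}(\mathcal{H}^{0}(P)(D))$ (here relative Serre duality, with $\omega_{X/Y}=\mathcal{O}_{X}(D)$, is a natural tool). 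Once $\mathcal{E}$ is known to be a projective generator of $\Per(X/Y)$ with $\Phi(\mathcal{E})=\mathcal{A}$, transporting the standard t-structure on $D^{b}(\Coh(\mathcal{A}))$ through $\Phi$ must return the perverse heart, so $\Phi(\Per(X/Y))=\Coh(\mathcal{A})$, completing the argument.
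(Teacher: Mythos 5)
The paper gives no proof of Theorem \ref{vdb}: it is imported verbatim from Van den Bergh, so there is no in-paper argument to compare against. Your sketch reconstructs what is essentially the source's own proof via relative tilting, and the concrete computations check out: $\mathcal{E}nd(\mathcal{E})\cong\mathcal{O}_{X}^{\oplus 2}\oplus\mathcal{O}_{X}(D)\oplus\mathcal{O}_{X}(-D)$, the identifications $\dR f_{*}\mathcal{O}_{X}(D)\cong\mathcal{O}_{Y}$ and $\dR f_{*}\mathcal{O}_{X}(-D)\cong\mathcal{I}_{C}$ are correct, and the generation argument via $\mathcal{C}=\pi^{*}D^{b}(C)\otimes\mathcal{O}(D)$ and the invertibility of $R^{1}\pi_{*}\mathcal{O}_{D}(-2)$ is clean. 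Two points in the heart-matching step need attention, and the second is the one genuine gap. First, besides controlling $\dR^{1}f_{*}(\mathcal{H}^{0}(P)(D))$ you also need $f_{*}(\mathcal{H}^{-1}(P)(D))=0$; this does not follow formally from $f_{*}\mathcal{H}^{-1}(P)=0$ for an arbitrary morphism, but holds here because $\mathcal{O}_{X}(-D)=\mathcal{I}_{D}$ is generated by pullbacks of local functions on $Y$ (the defining property of the blow-up), so a local section of $F(D)=\mathcal{H}om(\mathcal{O}_{X}(-D),F)$ that kills all such generators is zero. Second, the step you leave open --- that $\Hom(T,\mathcal{C}^{0})=0$ and $\dR^{1}f_{*}T=0$ force $\dR^{1}f_{*}(T(D))=0$ --- is indeed the crux, but it can be closed without Serre duality by exactly the technique the paper uses to prove Lemma \ref{criterion for per}: the vanishing $\Hom(T,\mathcal{O}_{L_{y}}(-1))=0$ forces every locally free summand of $T|_{L_{y}}$ to have degree $\geq 0$, hence $T(D)|_{L_{y}}$ has summands of degree $\geq -1$ and $H^{1}(L_{y},T(D)|_{L_{y}})=0$, and the formal-functions induction in that proof upgrades this to $\dR^{1}f_{*}(T(D))=0$. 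With those two points filled in, and the standard fact that a heart of a bounded t-structure contained in another such heart equals it, your argument is complete and faithful to the cited source.
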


From the above theorem, we can easily get the following: 
\begin{lem}
\label{r1 per}
Let $E \in \Per(X/Y)$. 
Then we have 
$\dR^1 f_{*}\left(
E(D)
\right)=0$. 
\end{lem}
\begin{proof}
For $E \in \Per(X/Y)$, we have 
$
\Phi(E)=\dR f_{*}E \oplus \dR f_{*}\left(E(D) \right) 
\in \Coh(\mathcal{A})$ 
by Theorem \ref{vdb}. 
This in particular implies 
$\dR^1f_{*}(E(D))=0$. 
\end{proof}

We give a criterion when 
a coherent sheaf $E \in \Coh(X)$ is in $\Per(X/Y)$. 
Before stating the criterion, we recall the following lemma: 

\begin{lem}[{\cite[Lemma 1.2]{ny11b}}]
\label{property of adjoint}
Let $E$ be a coherent sheaf, 
$\phi: f^{*}f_{*}E \to E$ be 
the adjoint morphism. 
Then the following statements hold. 
\begin{enumerate}
\item We have 
$f_{*}(\Image\phi) \cong f_{*}E$, and 
$\dR^1f_{*}(\Image\phi)=0$. 

\item We have 
$f_{*}(\Coker\phi)=0$, and 
$\dR^1f_{*}(\Coker\phi) \cong \dR^1f_{*}E$. 

\item $E \in \Per(X/Y)$ if and only if 
$\Coker\phi=0$. 

\item We have $\Ker(\phi) \in \mathcal{C}^0$. 
\end{enumerate}
\end{lem}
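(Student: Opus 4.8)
The plan is to name the three coherent sheaves $K:=\Ker\phi$, $I:=\Image\phi$, $Q:=\Coker\phi$, so that $\phi$ factors through the two short exact sequences
$0\to K\to f^{*}f_{*}E\to I\to 0$ and $0\to I\to E\to Q\to 0$, and then to extract all four assertions by applying $\dR f_{*}$ and chasing the resulting long exact sequences. Two robust inputs will be used throughout: from $\dR f_{*}\mathcal{O}_{X}\cong\mathcal{O}_{Y}$ we get $f_{*}\mathcal{O}_{X}=\mathcal{O}_{Y}$ and $\dR^{\geq 1}f_{*}\mathcal{O}_{X}=0$, while $\dim f^{-1}(y)\leq 1$ gives $\dR^{\geq 2}f_{*}=0$. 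Before starting I would record two preliminary facts. First, for any coherent sheaf $G$ on $Y$ one has $\dR^{\geq 1}f_{*}f^{*}G=0$: working locally, choose a presentation $\mathcal{O}_{X}^{\oplus b}\twoheadrightarrow f^{*}G$ with kernel $N$ and compare $\dR^{1}f_{*}\mathcal{O}_{X}^{\oplus b}=0$ with $\dR^{2}f_{*}N=0$. Second, the triangle identity for the adjunction $f^{*}\dashv f_{*}$ shows that $f_{*}\phi$ is a split surjection whose section is the unit $\eta\colon f_{*}E\to f_{*}f^{*}f_{*}E$.

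For (1), left-exactness of $f_{*}$ applied to $I\hookrightarrow E$ gives a monomorphism $f_{*}I\hookrightarrow f_{*}E$; since $f_{*}\phi$ factors through $f_{*}I$ and is surjective, this monomorphism is forced to be an isomorphism $f_{*}I\xrightarrow{\sim}f_{*}E$. The vanishing $\dR^{1}f_{*}I=0$ then follows from the first sequence together with $\dR^{1}f_{*}f^{*}f_{*}E=0$ and $\dR^{2}f_{*}K=0$. Part (2) is immediate from the long exact sequence attached to $0\to I\to E\to Q\to 0$: the isomorphism $f_{*}I\xrightarrow{\sim}f_{*}E$ annihilates the map $f_{*}E\to f_{*}Q$, so $f_{*}Q\hookrightarrow\dR^{1}f_{*}I=0$ gives $f_{*}Q=0$, and the remaining terms, using $\dR^{2}f_{*}I=0$, yield $\dR^{1}f_{*}Q\cong\dR^{1}f_{*}E$.

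For (3) I would argue by adjunction. If $\Coker\phi=0$ then $\dR^{1}f_{*}E\cong\dR^{1}f_{*}Q=0$ by (2); moreover any morphism $\psi\colon E\to G$ with $G\in\mathcal{C}^{0}$ satisfies $\psi\circ\phi=0$, because $\Hom(f^{*}f_{*}E,G)\cong\Hom(f_{*}E,f_{*}G)=0$ as $f_{*}G=0$, and surjectivity of $\phi$ then forces $\psi=0$; hence $\Hom(E,\mathcal{C}^{0})=0$ and $E\in\Per(X/Y)$. Conversely, if $E\in\Per(X/Y)$ then $\dR^{1}f_{*}E=0$, so by (2) we get $f_{*}Q=0$ and $\dR^{1}f_{*}Q=0$, i.e. $\dR f_{*}Q=0$ and $Q\in\mathcal{C}^{0}$; but the canonical epimorphism $E\twoheadrightarrow Q$ lies in $\Hom(E,\mathcal{C}^{0})=0$, which forces $Q=0$.

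Part (4) is the only step requiring more than bookkeeping, and I expect it to be the main obstacle. From the first sequence one reads off $\dR^{1}f_{*}K=\Coker(f_{*}\phi)=0$ at once (using $\dR^{1}f_{*}f^{*}f_{*}E=0$ and $\dR^{2}f_{*}K=0$), while $f_{*}K=\Ker(f_{*}\phi)$; so the whole content is the \emph{injectivity} of $f_{*}\phi$. Since $f_{*}\phi\circ\eta=\id$, the unit $\eta$ is a split monomorphism and $f_{*}\phi$ is injective if and only if $\eta$ is an isomorphism, which by the split-mono property reduces to proving that $\eta$ is surjective. This surjectivity is the crux and is not formal. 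The plan is to deduce it from the fact that $\dL f^{*}$ is fully faithful — equivalently $\dR f_{*}\dL f^{*}\cong\id$, which is the projection formula combined with $\dR f_{*}\mathcal{O}_{X}\cong\mathcal{O}_{Y}$ — so that the derived unit $f_{*}E\to\dR f_{*}\dL f^{*}f_{*}E$ is an isomorphism; comparing it with the underived unit through the truncation triangle for $\dL f^{*}f_{*}E$ and invoking $\dR^{\geq 2}f_{*}=0$ forces $\Coker(\eta)=0$. This identifies $f_{*}\phi$ as an isomorphism, whence $f_{*}K=0$; combined with $\dR^{\geq 1}f_{*}K=0$ and $K\in\Coh(X)$ this gives $\dR f_{*}K=0$, that is, $\Ker\phi\in\mathcal{C}^{0}$.
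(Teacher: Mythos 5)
Your argument is correct. Note, however, that the paper does not prove this lemma at all: it is quoted verbatim from \cite[Lemma 1.2]{ny11b} and used as a black box, so there is no in-paper proof to compare against. Your write-up is a sound self-contained reconstruction along the standard lines (and essentially the same route as Nakajima--Yoshioka's original argument): the two short exact sequences through $\Image\phi$, the vanishing $\dR^{\geq 2}f_{*}=0$ from the fibre-dimension bound, and $\dR^{1}f_{*}f^{*}G=0$ from a local free presentation. You correctly isolate the only non-formal point, namely the surjectivity of the unit $\eta\colon f_{*}E\to f_{*}f^{*}f_{*}E$; your derivation of it from $\dR f_{*}\dL f^{*}\cong \id$ together with the truncation triangle $\tau^{\leq -1}\dL f^{*}f_{*}E\to \dL f^{*}f_{*}E\to f^{*}f_{*}E$ and the cohomological amplitude $[0,1]$ of $\dR f_{*}$ is valid, since $\mathcal{H}^{1}\bigl(\dR f_{*}\tau^{\leq -1}\dL f^{*}f_{*}E\bigr)=0$ forces $\Coker(\eta)=0$, and a split monomorphism that is also an epimorphism is an isomorphism. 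No gaps.
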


The following criterion will be 
frequently used in this paper: 

\begin{lem}[cf. {\cite[Proposition 1.9]{ny11b}}]
\label{criterion for per}
Let $E$ be a coherent sheaf. 
Then $E$ is an object of the category 
$\Per(X/Y)$ if and only if 
for every point $y$ of $C$, we have 
$\Hom(E, \mathcal{O}_{L_{y}}(-1))=0$, 
where 
$L_{y}:=f^{-1}(y) \cong \mathbb{P}^1$ 
and 
$\mathcal{O}_{L_{y}}(-1):=\mathcal{O}_{\mathbb{P}^1}(-1)$. 
\end{lem}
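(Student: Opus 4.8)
The plan is to reduce the statement to the cokernel criterion of Lemma \ref{property of adjoint}(3), namely that $E \in \Per(X/Y)$ if and only if $\Coker\phi = 0$, where $\phi \colon f^*f_*E \to E$ is the adjoint morphism. The lemma then becomes the assertion that $\Coker\phi = 0$ if and only if $\Hom(E, \mathcal{O}_{L_y}(-1)) = 0$ for all $y \in C$. First I would record that $\mathcal{O}_{L_y}(-1)$ lies in $\mathcal{C}^0$: since $D = \mathbb{P}(N_{C/Y})$ and $\mathcal{O}(D)|_D$ is the relative $\mathcal{O}(-1)$, we have $\mathcal{O}_{L_y}(-1) = \pi^*(k(y)) \otimes \mathcal{O}(D)$, which is an object of $\mathcal{C}^0 = \pi^*\Coh(C)\otimes\mathcal{O}(D)$. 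Consequently the implication $\Coker\phi = 0 \Rightarrow E \in \Per(X/Y) \Rightarrow \Hom(E, \mathcal{C}^0) = 0 \Rightarrow \Hom(E, \mathcal{O}_{L_y}(-1)) = 0$ is immediate from the definition of $\Per(X/Y)$.

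For the converse I would argue by contraposition. Suppose $Q := \Coker\phi \neq 0$; I must produce a point $y$ and a nonzero map $E \to \mathcal{O}_{L_y}(-1)$. Since $Q$ is a quotient of $E$, it suffices to find $y$ with $\Hom(Q, \mathcal{O}_{L_y}(-1)) \neq 0$, for then the composite $E \twoheadrightarrow Q \to \mathcal{O}_{L_y}(-1)$ is nonzero. By Lemma \ref{property of adjoint}(2) we have $f_*Q = 0$, which forces $\Supp Q \subseteq D$ since $f$ is an isomorphism over $Y \setminus C$. For any $y$, restriction gives a surjection $Q \twoheadrightarrow Q_y := Q\otimes\mathcal{O}_{L_y}$ of sheaves supported on the fibre $L_y \cong \mathbb{P}^1$, so it is enough to find $y$ admitting a nonzero map $Q_y \to \mathcal{O}_{L_y}(-1)$ on $L_y$.

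The fibrewise input is the elementary fact on $\mathbb{P}^1$: if $V \neq 0$ is coherent with $H^0(V) = 0$, then, splitting $V$ into its torsion part and a sum $\bigoplus \mathcal{O}(a_i)$, the vanishing of $H^0$ forces the torsion part to vanish and every $a_i \leq -1$, whence $\Hom(V, \mathcal{O}(-1)) \neq 0$. Thus I am reduced to exhibiting a single $y$ with $Q_y \neq 0$ and $H^0(L_y, Q_y) = 0$. This is the step that requires the most care and is the genuinely higher-dimensional difficulty: in the surface case $C$ is a point and $f_*Q = 0$ says directly that $H^0$ of the unique fibre vanishes, but here $f_*Q = 0$ does not by itself control an individual fibre. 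I would instead take $y$ to be a general point of an irreducible component of the closed set $f(\Supp Q) \subseteq C$; applying generic flatness of $Q$ over that component together with the theorem on cohomology and base change (Grauert), the vanishing $f_*Q = 0$ yields $H^0(L_y, Q_y) = 0$ for general such $y$, while $Q_y \neq 0$ because $y$ lies in the image of $\Supp Q$.

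Combining this with the $\mathbb{P}^1$ fact and the reductions above completes the contrapositive, and hence the proof. The main obstacle is precisely this base-change argument producing a fibre with no global sections; everything else is formal once Lemma \ref{property of adjoint} is in hand.
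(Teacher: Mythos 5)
Your forward implication and the reduction of the converse to the surjectivity criterion of Lemma \ref{property of adjoint}(3) are fine, and the fibrewise fact on $\mathbb{P}^1$ is correct. The gap is exactly where you flag "the most care": the claim that $f_*Q=0$ yields $H^0(L_y, Q_y)=0$ for general $y$ in a component of $f(\Supp Q)$ is false, and no form of cohomology and base change delivers it. The sheaf $Q$ is not flat over $Y$ (its support lies in $D$), so base change along $f$ does not apply; and if instead you fibre $Q$ over $Z\subseteq C$ via generic flatness, the fibres of $\Supp Q\to Z$ are the scheme-theoretic fibres of the \emph{support}, which may be thickenings of $L_y$ inside $X$, not the reduced curve $L_y=f^{-1}(y)$. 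Grauert then controls $H^0$ of $Q$ on those thickened curves, and this vanishing does not descend to $Q\otimes\mathcal{O}_{L_y}$. Concretely, already for a surface ($C=\mathrm{pt}$) take $Q$ scheme-theoretically supported on $2L$ with $\pi_*Q=\mathcal{O}_L(-2)^{\oplus 2}\oplus\mathcal{O}_L(-5)$ and with $I_LQ=\mathcal{O}_L(-4)$ embedded via the twisted Euler sequence $0\to\mathcal{O}_L(-4)\to\mathcal{O}_L(-2)^{\oplus 2}\to\mathcal{O}_L\to 0$ (the $\mathcal{O}_X$-module structure is a nilpotent map $t\colon(\pi_*Q)(1)\to\pi_*Q$ factoring through the quotient $\mathcal{O}_L(1)\oplus\mathcal{O}_L(-4)$ and surjecting onto $I_LQ$). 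Then $f_*Q=H^0(\pi_*Q)=0$, yet $Q\otimes\mathcal{O}_L\cong\mathcal{O}_L\oplus\mathcal{O}_L(-5)$ has a section. Taking the product of this example with $C$ shows that in higher dimension the failure can occur over \emph{every} $y$, so passing to a general point does not help. (In this example $\Hom(Q,\mathcal{O}_{L}(-1))\neq 0$ still holds because of the $\mathcal{O}_L(-5)$ summand, so your overall strategy is not visibly refuted, but the route through $H^0(Q_y)=0$ does not establish it; you would have to rule out directly that $Q\otimes\mathcal{O}_{L_y}$ is torsion or a sum of nonnegative line bundles for all $y$, which is a genuinely different argument.)

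For comparison, the paper does not pass through $\Coker\phi$ at all: it verifies the two defining conditions of $\Per(X/Y)$ for $E$ itself. The vanishing $\Hom(E,\mathcal{C}^0)=0$ is checked by restricting a map $\psi\colon E\to\pi^*M\otimes\mathcal{O}(D)$ to the fibres; here the target is an honest $\mathcal{O}_D$-module, so the thickening problem above does not arise. The condition $\dR^1f_*E=0$ is proved by the theorem on formal functions together with an induction on the infinitesimal neighbourhoods $L_{y,n}$, and the splitting of coherent sheaves on $\mathbb{P}^1$ is applied to $E|_{L_y}$ directly: a summand $\mathcal{O}_{L_y}(a)$ with $a\leq -2$ would give a nonzero map $E\to\mathcal{O}_{L_y}(-1)$. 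If you want to keep your reduction to the adjoint morphism, you must either supply the missing argument that a nonzero $Q$ with $f_*Q=0$ always admits a nonzero map to some $\mathcal{O}_{L_y}(-1)$ (handling the non-reduced support), or abandon it in favour of the paper's direct verification.
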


\begin{proof}
When $\dim Y=2$, 
the same statement is stated and proven 
by Nakajima and Yoshioka 
in \cite[Proposition 1.9]{ny11b}. 
However, their proof does not work 
in the higher dimension. 
Hence we give the another proof 
which works in any dimension. 

Assume that $E \in \Per(X/Y)$. 
Then by the definition of $\Per(X/Y)$, 
we have 
$\Hom(E, \mathcal{C}^0)=0$. 
In particular, we have 
$\Hom(E, \mathcal{O}_{L_{y}}(-1))=0$. 

For the converse, we have to show the 
following two things: 
\begin{enumerate}
\item[(a)] $\Hom(E, \mathcal{C}^0)=0$, 
\item[(b)] $\dR^1f_{*}E=0$. 
\end{enumerate}

First we prove (a). 
We need to show that 
$
\Hom
\left(
E, \pi^{*}M \otimes \mathcal{O}(D)
\right)
$
vanishes for all $M \in \Coh(C)$. 
Take an element 
$\psi \in \Hom
\left(
E, \pi^{*}M \otimes \mathcal{O}(D)
\right)$. 
For each point $y \in C$, 
consider the restriction 
\[
\psi|_{L_{y}} \colon E|_{L_{y}} 
\to \pi^{*}(M|_{\{y\}}) \otimes \mathcal{O}_{L_{y}}(D) 
\cong \mathcal{O}_{L_{y}}(-1)^{\oplus k}
\]
($k \in \mathbb{Z}_{\geq0}$). 
By our assumption, 
$\psi|_{L_{y}}$ is a zero map 
for all $y \in C$. 
Hence $\psi$ itself must be zero. 
This proves (a). 

Next we prove (b). 
By the formal function theorem, 
it is enough to show that 
for every $y \in C$ and $n \in \mathbb{N}$, 
$H^1(L_{y, n}, E_{y, n})=0$. 
Here, $L_{y, n}:=X \times_{Y} \Spec \mathcal{O}_{Y, y}/m_{y}^n$ 
and $E_{y, n}:=E|_{L_{y, n}}$. 

We argue by induction on $n$. 
First let $n=1$. 
To obtain a contradiction, 
suppose that there exists $y \in C$ 
such that $H^1(L_{y}, E_{y, 1}) \neq 0$. 
Let us consider the exact sequence 
\[
0 \to (E_{y, 1})_{tor} \to E_{y, 1} \to (E_{y, 1})_{fr} \to 0, 
\]
where $(E_{y, 1})_{tor}$
(resp. $(E_{y, 1})_{fr}$) 
is the torsion part 
(resp. torsion free part)
of $E_{y, 1}$. 
Since $L_{y} \cong \mathbb{P}^1$, 
there exist integers $a_{i} \in \mathbb{Z}$ ($i=1, \cdots, l$) 
such that 
$(E_{y, 1})_{fr} \cong \bigoplus_{i=1}^{n} \mathcal{O}_{L_{y}}(a_{i})$. 
Since 
$\bigoplus_{i=1}^{n} H^1(L_{y}, \mathcal{O}_{L_{y}}(a_{i}))
\cong H^1(L_{y}, E_{y, 1}) \neq 0$, 
there exists $i_{0}$ such that $a_{i_{0}} \leq -2$. 
On the other hand, by the surjection 
$E \to E_{y, 1} \to \mathcal{O}_{L_{y}}(a_{i_{0}})$, 
we have 
\[
\Hom(\mathcal{O}_{L_{y}}(a_{i_{0}}), \mathcal{O}_{L_{y}}(-1)) 
\subset 
\Hom(E, \mathcal{O}_{L_{y}}(-1))=0, 
\]
which is a contradiction. 
Hence when $n=1$, we have 
$H^1(L_{y}, E_{y, 1})=0$
for every $y \in C$. 

Next assume that for a fixed integer $n \in \mathbb{N}$, 
$H^1(L_{y, n}, E_{y, n})=0$ holds ($y \in C$). 
Consider the exact sequence 
\[
0 \to m_{y}^n/m_{y}^{n+1} \to \mathcal{O}_{Y, y}/m_{y}^{n+1} 
\to \mathcal{O}_{Y, y}/m_{y}^{n} \to 0. 
\]
Note that 
$m_{y}^n/m_{y}^{n+1} \cong (\mathcal{O}_{Y, y}/m_{y})^{\oplus k}$ 
for some $k \in \mathbb{N}$. 
Applying the functor 
$f^{*}(-) \otimes E$ to the above exact sequence, 
we have the exact sequence 
\[
E_{y, 1}^{\oplus k} \to E_{y, n+1} \to E_{y, n} \to 0. 
\]
Split this exact sequence into two short exact sequences: 
\begin{align*}
&0 \to M \to E_{y, 1}^{\oplus k} \to K \to 0, \\
&0 \to K \to E_{n+1} \to E_{n} \to 0. 
\end{align*}
From the first exact sequence, we have the exact sequence 
\[
0=H^1(X, E_{y, 1})^{\oplus k} \to H^1(X, K) \to H^2(X, M)=0. 
\]
Note that 
$H^1(X, E_{y, 1})=0$ follows from the argument 
of $n=1$ case, while 
$H^2(X, M)=0$ holds since $\dim\Supp(M) \leq 1$. 
Hence we also have $H^1(X, K)=0$. 
Then by the second exact sequence and the induction hypothesis, 
we conclude that 
$H^1(E_{y, n+1})=0$ as required.  
\end{proof}

\begin{cor}
\label{cor of criterion}
Let $E \in \Per(X/Y) \cap \Coh(X)$ 
be a perverse coherent sheaf. 
Then $E(-D)$ is also a perverse coherent sheaf. 
\end{cor}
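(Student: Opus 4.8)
The plan is to reduce everything to the fiberwise criterion of Lemma \ref{criterion for per}, applied now to the twisted sheaf $E(-D)$. Since $E(-D)=E\otimes\mathcal{O}_X(-D)$ is again a coherent sheaf, that lemma tells us it lies in $\Per(X/Y)$ precisely when $\Hom(E(-D),\mathcal{O}_{L_y}(-1))=0$ for every $y\in C$. So the whole statement comes down to this single Hom-vanishing, and I would not try to verify the two defining conditions $\dR^1 f_*(E(-D))=0$ and $\Hom(E(-D),\mathcal{C}^0)=0$ directly: the second one is awkward because twisting $\mathcal{C}^0=\pi^{*}\Coh(C)\otimes\mathcal{O}(D)$ by $\mathcal{O}(D)$ does not land back inside $\mathcal{C}^0$, so the clean bookkeeping provided by the criterion is exactly what makes the argument go through.

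Next I would move the twist across the Hom. Twisting by the line bundle $\mathcal{O}_X(D)$ gives $\Hom(E(-D),\mathcal{O}_{L_y}(-1))\cong\Hom\bigl(E,\mathcal{O}_{L_y}(-1)\otimes\mathcal{O}_X(D)\bigr)$. The fiber $L_y$ is a line in the exceptional $\mathbb{P}^1$-bundle $D$, and $\mathcal{O}_X(D)|_{L_y}\cong\mathcal{O}_{\mathbb{P}^1}(-1)$ because $D\cdot L_y=-1$; hence $\mathcal{O}_{L_y}(-1)\otimes\mathcal{O}_X(D)\cong\mathcal{O}_{L_y}(-2)$ and the target of interest becomes $\Hom(E,\mathcal{O}_{L_y}(-2))$. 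The mild surprise is that the twist pushes us one degree below the value $-1$ that the criterion for $E$ controls, so I need a way to bridge from $-2$ back to $-1$. I would do this with the short exact sequence $0\to\mathcal{O}_{L_y}(-2)\to\mathcal{O}_{L_y}(-1)\to\mathcal{O}_p\to 0$ associated to a point $p\in L_y$: applying the left-exact functor $\Hom(E,-)$ yields an injection $\Hom(E,\mathcal{O}_{L_y}(-2))\hookrightarrow\Hom(E,\mathcal{O}_{L_y}(-1))$.

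Finally, $\Hom(E,\mathcal{O}_{L_y}(-1))=0$ holds because $E$ itself is perverse coherent (this is exactly Lemma \ref{criterion for per}, or equivalently the defining condition $\Hom(E,\mathcal{C}^0)=0$ together with $\mathcal{O}_{L_y}(-1)\in\mathcal{C}^0$). The injection therefore forces $\Hom(E,\mathcal{O}_{L_y}(-2))=0$, hence $\Hom(E(-D),\mathcal{O}_{L_y}(-1))=0$ for all $y\in C$, and the criterion gives $E(-D)\in\Per(X/Y)$. I expect the only genuinely substantive point to be the degree-shift bridge in the second paragraph, namely choosing the right short exact sequence so that the criterion's vanishing at $-1$ can be leveraged; the restriction computation $\mathcal{O}_X(D)|_{L_y}\cong\mathcal{O}_{\mathbb{P}^1}(-1)$ is standard blow-up geometry, and no positivity or formal-function input (unlike the proof of the criterion itself) should be required.
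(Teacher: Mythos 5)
Your argument is correct and follows the paper's proof exactly: both reduce to Lemma \ref{criterion for per}, rewrite $\Hom(E(-D),\mathcal{O}_{L_y}(-1))$ as $\Hom(E,\mathcal{O}_{L_y}(-2))$, and embed this into $\Hom(E,\mathcal{O}_{L_y}(-1))=0$. You merely make explicit the justification for the inclusion (via the sequence $0\to\mathcal{O}_{L_y}(-2)\to\mathcal{O}_{L_y}(-1)\to\mathcal{O}_p\to 0$) and the restriction $\mathcal{O}_X(D)|_{L_y}\cong\mathcal{O}_{\mathbb{P}^1}(-1)$, which the paper leaves implicit.
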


\begin{proof}
Take a point $y \in C$. 
Noting the isomorphism 
$\mathcal{O}_{L_{y}}(D) \cong \mathcal{O}_{L_{y}}(-1)$, 
we have 
\[
\Hom\left(E(-D), \mathcal{O}_{L_{y}}(-1)
\right)
=\Hom\left(E, \mathcal{O}_{L_{y}}(-2)
\right)
\subset \Hom\left(E, \mathcal{O}_{L_{y}}(-1)
\right)=0.
\]
Hence by Lemma \ref{criterion for per}, 
we have $E(-D) \in \Per(X/Y)$. 
\end{proof}


\subsection{Moduli space of $m$-stable sheaves}
In this subsection, we recall 
the notion of $m$-stability 
and the moduli space of $m$-stable sheaves 
introduced by Nakajima and Yoshioka 
in their paper \cite{ny11b}. 
Let $H$ be an ample divisor on $Y$, 
$v=(v_{0}, v_{1}, \cdots) \in H^{2*}(X; \mathbb{Q})$ 
such that 
$v_{0}>0$ and 
$\gcd(v_{0}, v_{1}.f^*H^{n-1})=1$. 

\begin{defin}
\label{m-stability}
Let $E \in \Coh(X)$ 
be a coherent sheaf. 
\begin{enumerate}
\item We say that 
$E$ is {\it $0$-stable} if 
$E \in \Per(X/Y)$ and 
$f_{*}E \in \Coh(Y)$ is $\mu_{H}$-stable. 

\item Let $m \in \mathbb{Z}_{>0}$ 
be a positive integer. 
We say that 
$E$ is {\it $m$-stable} if 
$E(-mD)$ is $0$-stable. 
\end{enumerate}
\end{defin}

\begin{thm}[{\cite[Theorem 2.9]{ny11b}}]
Let $m \in \mathbb{Z}_{\geq 0}$. 
Then there exists the projective coarse moduli scheme $M^m(v)$ 
of $m$-stable sheaves with Chern character $v$. 
\end{thm}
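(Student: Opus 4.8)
The plan is to reduce the statement for general $m$ to the case $m=0$, and then to realize $M^0(v)$ as a GIT quotient by transporting the moduli problem, via Van den Bergh's equivalence (Theorem \ref{vdb}), to a moduli problem for modules over the finite $\mathcal{O}_Y$-algebra $\mathcal{A}=f_{*}\mathcal{E}nd(\mathcal{E})$, where the coprimality hypothesis makes slope stability behave as well as Gieseker stability.

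First I would dispose of the reduction to $m=0$. Tensoring by $\mathcal{O}(-mD)$ is an exact autoequivalence of $\Coh(X)$, and by iterating Corollary \ref{cor of criterion} it preserves $\Per(X/Y)\cap\Coh(X)$. By Definition \ref{m-stability} the assignment $E\mapsto E(-mD)$ therefore identifies, functorially in families, the moduli problem of $m$-stable sheaves of Chern character $v$ with that of $0$-stable sheaves of Chern character $v_m:=v\cdot\ch(\mathcal{O}(-mD))$. The coprimality hypothesis is preserved: the rank $v_0$ is unchanged, and the degree-one component becomes $v_1-mv_0D$, for which $(v_1-mv_0D).f^{*}H^{n-1}=v_1.f^{*}H^{n-1}$ since $D.f^{*}H^{n-1}=0$ by the projection formula (as $f_{*}[D]=0$). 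Thus it suffices to construct the projective coarse moduli scheme $M^0(v)$ for an arbitrary $v$ with $\gcd(v_0,v_1.f^{*}H^{n-1})=1$.

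Second, I would translate $0$-stability across $\Phi$. A $0$-stable sheaf $E$ lies in $\Per(X/Y)\cap\Coh(X)$, so $\Phi(E)$ is a coherent $\mathcal{A}$-module, and writing $e\in\mathcal{A}$ for the idempotent projecting $\mathcal{E}$ onto its summand $\mathcal{O}_X$, one recovers $f_{*}E=e\cdot\Phi(E)$ as an $\mathcal{O}_Y$-module (here $\dR f_{*}E=f_{*}E$ because $E$ is perverse). The two defining conditions of $0$-stability then become conditions on the $\mathcal{A}$-module side: the requirement $E\in\Coh(X)$, rather than merely $E\in\Per(X/Y)$, is the open condition $\mathcal{H}^{-1}(\Phi^{-1}(\,\cdot\,))=0$ cutting out genuine sheaves among $\mathcal{A}$-modules, and the requirement that $f_{*}E$ be $\mu_H$-stable becomes a slope-stability condition on $\mathcal{A}$-modules defined through the $\mathcal{O}_Y$-Chern character of $e\cdot\Phi(E)$ polarized by $H$. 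Crucially, the coprimality forces $\mu_H$-stability and $\mu_H$-semistability to coincide for the relevant numerical type, so along this locus slope (semi)stability agrees with Gieseker (semi)stability; this is what will make the construction projective and the quotient geometric.

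Third, I would run the standard GIT construction of moduli of modules over a sheaf of algebras, in the style of Simpson and Langer, now for $\mathcal{A}$ on $Y$. I would first establish boundedness of the family of $0$-stable $E$ with Chern character $v$: since $f_{*}E$ ranges over $\mu_H$-semistable sheaves on $Y$ with the fixed Chern character $w$ (a bounded family), and $\mathcal{A}$ is finite over $\mathcal{O}_Y$, the corresponding stable $\mathcal{A}$-modules with fixed $\mathcal{O}_Y$-Chern character are bounded. Then, for $N\gg0$, I would embed the semistable locus into a Quot scheme of $\mathcal{A}$-module quotients of $\mathcal{A}\otimes\mathcal{O}_Y(-N)^{\oplus P(N)}$, linearize the natural $\GL$-action so that GIT (semi)stability matches the stability condition of the previous step, and form the quotient. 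Because coprimality makes every semistable object stable with closed orbit, the quotient is geometric and yields a projective scheme $M^0(v)$ coarsely representing the functor; pulling back through $\Phi$ and untwisting by $\mathcal{O}(mD)$ then gives $M^m(v)$.

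I expect the main obstacle to be the second step. In higher dimension a perverse coherent sheaf may carry torsion along $D$, so the passage between $E$ and $f_{*}E$ is looser than the direct slope comparison available on surfaces; one must verify carefully that the ``genuine sheaf'' locus is open and preserved in families, that the stability condition on $\Coh(\mathcal{A})$ really matches $0$-stability of $E$ under $\Phi$, and that boundedness holds uniformly. Once the moduli of $\mathcal{A}$-modules is set up and semistability is shown to equal stability via coprimality, projectivity and coarse representability follow from the general GIT formalism.
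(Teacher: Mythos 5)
The first thing to say is that the paper offers no proof of this statement: it is imported wholesale from Nakajima--Yoshioka (\cite{ny11b}, Theorem 2.9), and the remark that follows it only notes that their Section 2 treats the construction even without the $\gcd$ hypothesis. So there is no in-paper argument to compare against, and your proposal is in effect a blind reconstruction of the cited construction. Its overall route is the correct one and matches that source: the reduction to $m=0$ by twisting with $\mathcal{O}(-mD)$ is exactly the identification $(-)\otimes\mathcal{O}(-mD)\colon M^m(v)\to M^0(v.e^{-mD})$ that the paper itself invokes at the start of Section 3 (and your check that $D.f^{*}H^{n-1}=0$ preserves the coprimality is right), and the passage through Van den Bergh's equivalence $\Phi$ to modules over $\mathcal{A}=f_{*}\mathcal{E}nd(\mathcal{E})$ followed by a Simpson-type GIT construction is how the cited proof proceeds.

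There is, however, one genuine gap in the proposal as written, at the projectivity step. You correctly observe that inside $\Coh(\mathcal{A})\cong\Per(X/Y)$ the condition of being an honest sheaf, i.e.\ $\mathcal{H}^{-1}(\Phi^{-1}(-))=0$, is an \emph{extra open} condition; but an open condition cuts out only an open (hence a priori quasi-projective) subscheme of the GIT quotient, so the sentence ``the quotient is geometric and yields a projective scheme'' applies to the moduli of stable $\mathcal{A}$-modules, not to the locus of genuine sheaves that the theorem is about. The danger is real: an object $P\in\Per(X/Y)$ with $0\neq\mathcal{H}^{-1}(P)\in\mathcal{C}^0$ has $\dR f_{*}P=f_{*}\mathcal{H}^0(P)$, which can perfectly well be $\mu_{H}$-stable, so the slope condition you impose on $e\cdot\Phi(P)$ does not see $\mathcal{H}^{-1}(P)$ at all (it contributes nothing to the rank or to $f^{*}H^{n-1}.\ch_1$). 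To close this you must either choose the stability on $\Coh(\mathcal{A})$ so that such $P$ are destabilized (e.g.\ a Gieseker-type condition on the full module, for which $\Phi(\mathcal{H}^{-1}(P)[1])$ is a torsion submodule) and then prove that this condition coincides with ``$f_{*}E$ is $\mu_{H}$-stable'' on the sheaf locus, or prove properness of the sheaf locus directly by a Langton-type valuative-criterion argument showing that a family of $0$-stable sheaves over a punctured disc extends to a $0$-stable \emph{sheaf} at the centre. This is a substantive ingredient of the Nakajima--Yoshioka proof and is the one step your outline cannot delegate to ``standard GIT''; the related issue of matching the linearization with the $f_{*}$-condition you do flag yourself, but it and the properness question are really the same problem.
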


\begin{rmk}
In \cite[Section 2]{ny11b}, 
the notion of $m$-stability 
and the existence of the coarse moduli space 
are discussed without assuming 
$\gcd(v_{0}, v_{1}.f^*H^{n-1})=1$. 
However, in the following, 
we use this assumption 
almost everywhere. 
In particular, the following fact 
will be used frequently: 
for $E \in \Per(X/Y) \cap \Coh(X)$ 
with $\ch(E)=v$, 
$f_{*}E$ is $\mu_{H}$-semistable 
if and only if it is $\mu_{H}$-stable. 
\end{rmk}

\section{Wall-crossing}
\label{wall crossing}
In this section, 
we always fix an ample divisor $H$ 
on $Y$ and the Chern character 
$v=(v_{0}, v_{1}, \cdots) 
\in H^{2*}(X; \mathbb{Q})$ 
of a sheaf 
such that $v_{0} > 0$ and 
$\gcd(v_{0}, v_{1},f^*H^{n-1})=1$. 
We will describe 
the difference between $m$-stability 
and $(m+1)$-stability. 
To do that, we may assume $m=0$ 
since we have an isomorphism 
\[
(-) \otimes \mathcal{O}(-mD) \colon 
M^m(v) \to M^0(v.e^{-mD}). 
\]


\subsection{Torsion pairs}

To construct the diagram (\ref{zig-zag}), 
we use {\it torsion pairs} (cf. \cite{hrs96}). 

\begin{defin}
\label{torsion pair}
Let $\mathcal{A}$ be an Abelian category, 
$\mathcal{T}, \mathcal{F} \subset \mathcal{A}$ 
full subcategories of $\mathcal{A}$. 
Then the pair $(\mathcal{T}, \mathcal{F})$ 
is a {\it torsion pair} 
on $\mathcal{A}$ 
if the following two conditions hold: 

\begin{enumerate}
\item $\Hom\left(
\mathcal{T}, \mathcal{F}
\right)=0$. 

\item For every object $E \in \mathcal{A}$, 
there exists objects 
$T \in \mathcal{T}$, 
$F \in \mathcal{F}$, 
and an short exact sequence 
\[
0 \to T \to E \to F \to 0. 
\]
\end{enumerate}
\end{defin}

Note that by the property (1), 
the exact sequence (2) is 
unique up to isomorphism. 
First we recall the following easy property of torsion pairs. 

\begin{lem}
\label{property tor pair}
Let $(\mathcal{T}, \mathcal{F})$ be a torsion pair 
on an Abelian category $\mathcal{A}$. 
Then $\mathcal{T}$ is closed under taking quotients. 
\end{lem}
\begin{proof}
Take an element $E \in \mathcal{T}$ 
and a surjective map 
$E \twoheadrightarrow Q$ 
in $\mathcal{A}$. 
By the definition of a torsion pair, 
there exists an exact sequence 
\[
0 \to T \to Q \to F \to 0
\]
with $T \in \mathcal{T}$, 
$F \in \mathcal{F}$. 
Then we get a surjective map 
$E \twoheadrightarrow Q \twoheadrightarrow F$. 
On the other hand, we have 
$\Hom\left(
\mathcal{T}, \mathcal{F}
\right)=0$ 
by definition. 
Hence we must have $F=0$, i.e. 
$Q=T \in \mathcal{T}$. 
\end{proof}

In this paper, 
we use the following two torsion pairs 
on $\Coh(X)$. 

\begin{defin}
\label{torsion pair for per}
Define the full subcategories of $\Coh(X)$ as 
\begin{align*}
&\mathcal{T}:=\left\{
 T \in \Coh(X) : 
 \dR^1f_{*}T=0, 
 \Hom(T, \mathcal{C}^0)=0
  \right\}, \\
&\mathcal{F}:=\left\{
 F \in \Coh(X) : 
 f_{*}F=0 
 \right\}. 
\end{align*}
\end{defin}

\begin{defin}
\label{torsion pair for pertilde}
We define the full subcategories 
$\mathcal{T}_{D}, \mathcal{F}_{D} 
\subset \Coh(X)$ as follows: 
\begin{align*}
&\mathcal{T}_{D}:=
\left\{
T \in \Coh(X) : 
T(-D) \in \Per(X/Y), 
\Supp(T) \subset D
\right\}, \\ 
&\mathcal{F}_{D}:=
\left\{
F \in \Coh(X) : 
\Hom(\mathcal{T}_{D}, F)=0
\right\}. 
\end{align*} 
\end{defin}

\begin{lem}
The pairs 
$(\mathcal{T}, \mathcal{F})$ and 
$(\mathcal{T}_{D}, \mathcal{F}_{D})$ 
are torsion pairs on $\Coh(X)$. 
\end{lem}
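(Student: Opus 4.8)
The plan is to verify the two defining properties of a torsion pair for each of $(\mathcal{T},\mathcal{F})$ and $(\mathcal{T}_D,\mathcal{F}_D)$, organizing everything around the single observation that a coherent sheaf lies in $\mathcal{T}$ precisely when it is perverse. Indeed, for $E\in\Coh(X)$ one has $\mathcal{H}^{-1}(E)=0$, so the defining conditions of $\Per(X/Y)$ collapse to $\dR^1f_{*}E=0$ and $\Hom(E,\mathcal{C}^0)=0$; hence $\mathcal{T}=\Per(X/Y)\cap\Coh(X)$. I would first record the closure properties I need. The class $\mathcal{T}$ is closed under quotients: for a surjection $T\twoheadrightarrow T'$ with kernel $K$, the sequence $\dR^1f_{*}T\to\dR^1f_{*}T'\to\dR^2f_{*}K$ together with $\dR^2f_{*}K=0$ (the fibres of $f$ have dimension $\leq 1$) gives $\dR^1f_{*}T'=0$, while $\Hom(T',\mathcal{C}^0)\hookrightarrow\Hom(T,\mathcal{C}^0)=0$. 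It is closed under extensions because both defining conditions pass through a short exact sequence via the long exact sequences for $\dR f_{*}$ and for the contravariant functor $\Hom(-,\mathcal{C}^0)$. Twisting by $\mathcal{O}(-D)$, and noting that the support condition $\Supp(-)\subseteq D$ is stable under subquotients and extensions, the same formal argument shows that $\mathcal{T}_D$ is closed under quotients and extensions as well.

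The heart of the matter is the following claim, which I would prove next: for every $E\in\Coh(X)$ the image $I_E:=\Image(\phi_E)$ of the adjoint morphism $\phi_E\colon f^{*}f_{*}E\to E$ lies in $\mathcal{T}$. By Lemma~\ref{property of adjoint}(1) we have $\dR^1f_{*}I_E=0$ and an isomorphism $f_{*}I_E\cong f_{*}E$ induced by the inclusion $\iota\colon I_E\hookrightarrow E$. To see that $I_E$ is perverse I would apply Lemma~\ref{property of adjoint}(3) to $I_E$ itself: by naturality of the counit $f^{*}f_{*}\Rightarrow\id$ applied to $\iota$, the adjoint morphism $\phi_{I_E}$ fits into a commuting square with $\phi_E$, and since $f_{*}\iota$ is an isomorphism the image of $\phi_{I_E}$ equals $I_E$. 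Thus $\Coker(\phi_{I_E})=0$, so $I_E\in\Per(X/Y)$ and therefore $I_E\in\mathcal{T}$. In particular $I_E\neq 0$ whenever $f_{*}E\neq 0$.

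With this in hand the axioms for $(\mathcal{T},\mathcal{F})$ follow. For orthogonality, given $g\colon T\to F$ with $T\in\mathcal{T}$ and $F\in\mathcal{F}$, the image $\Image g$ is a quotient of $T$, hence in $\mathcal{T}$, and a subsheaf of $F$, hence $f_{*}\Image g=0$; combining these, $\dR f_{*}\Image g=0$, i.e. $\Image g\in\mathcal{C}^0$, and then $\Hom(\Image g,\mathcal{C}^0)=0$ (from $\Image g\in\mathcal{T}$) forces $\Image g=0$. For the decomposition I would take $T\subseteq E$ to be the maximal subsheaf lying in $\mathcal{T}$, which exists because $\Coh(X)$ is Noetherian and $\mathcal{T}$ is closed under finite direct sums and quotients, and set $F:=E/T$. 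If $f_{*}F\neq 0$ then $I_F\in\mathcal{T}$ is a nonzero subsheaf of $F$, and its preimage $\widetilde{T}\subseteq E$ sits in an extension $0\to T\to\widetilde{T}\to I_F\to 0$ of objects of $\mathcal{T}$, so $\widetilde{T}\in\mathcal{T}$ strictly contains $T$, contradicting maximality; therefore $F\in\mathcal{F}$.

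The pair $(\mathcal{T}_D,\mathcal{F}_D)$ is then purely formal: orthogonality is built into the definition $\mathcal{F}_D=\{F:\Hom(\mathcal{T}_D,F)=0\}$, and for the decomposition I would again take $T$ to be the maximal $\mathcal{T}_D$-subsheaf of $E$ and observe that any nonzero $g\colon S\to E/T$ with $S\in\mathcal{T}_D$ would produce, via $\Image g\in\mathcal{T}_D$ and its preimage in $E$, a strictly larger $\mathcal{T}_D$-subsheaf, which is impossible. I expect the main obstacle to be the claim of the second paragraph, that $\Image\phi_E$ is perverse: this is what replaces the ad hoc surface computation, it is the only place where the fine interplay between $f_{*}$ and the perverse condition enters, and the identification of $\mathcal{F}$ with the right orthogonal $\{F:f_{*}F=0\}=\mathcal{T}^{\perp}$ rests entirely on it. Everything else is the standard Noetherian torsion-theory machinery.
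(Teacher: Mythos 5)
Your proof is correct, and it takes the route the paper itself takes — the paper simply delegates the two halves to \cite[Lemma 1.3]{ny11b} and \cite[Lemma 2.15]{tod13a} (closure of $\mathcal{T}_{D}$ under quotients and extensions plus noetherianity of $\Coh(X)$), whereas you supply those arguments in full, including the key verification that $\Image(f^{*}f_{*}E \to E)$ lies in $\mathcal{T}$, which is exactly the content of the cited Nakajima--Yoshioka lemma. No gaps.
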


\begin{proof}
The assertion for the pair 
$(\mathcal{T}, \mathcal{F})$ 
has been proved 
for example in \cite[Lemma 1.3]{ny11b}. 
For the pair $(\mathcal{T}_{D}, \mathcal{F}_{D})$, 
it is enough to show that 
the subcategory 
$\mathcal{T}_{D} \subset \Coh(X)$ 
is closed under taking quotients and extensions 
by \cite[Lemma 2.15]{tod13a}, 
since $\Coh(X)$ is noetherian. 
The conditions $\Supp(T) \subset D$ 
and $\Hom(T(-D), \mathcal{C}^0)=0$ 
are clearly closed under taking quotients and extensions. 
Furthermore, since the morphism $f \colon X \to Y$ 
has relative dimension one, 
we can also see that the condition 
$\dR^1f_{*}T(-D)=0$ 
is closed under taking quotients and extensions. 
\end{proof}

\begin{rmk}
\label{rmk tilper}
Recall that we have 
$\Per(X/Y)=\left\langle
 \mathcal{F}[1], \mathcal{T}
 \right\rangle$. 
Similarly, we define 
\[
\widetilde{\Per}(X/Y):=
\left\langle
 \mathcal{F}_{D}[1], \mathcal{T}_{D}
 \right\rangle.
 \]
By the general theory of torsion pairs and tilting, 
the category $\widetilde{\Per}(X/Y)$ 
is the heart of a bounded t-structure 
on $D^b(X)$ (cf. \cite{hrs96}). 
\end{rmk}

\begin{lem}
\label{criterion for tor free}
We have $\mathcal{C}^0 \subset \mathcal{T}_{D}$. 
Furthermore, for a coherent sheaf $E \in \Coh(X)$ 
which is torsion free outside $D$, 
the following statements hold. 
\begin{enumerate}
\item The sheaf $f_{*}(E(-D))$ 
is torsion free if and only if 
$\Hom(\mathcal{T}_{D}, E)=0$, i.e. 
$E \in \mathcal{F}_{D}$. 

\item If the sheaf $f_{*}(E(-D))$ is torsion free, 
then $f_{*}(E)$ is also torsion free. 
\end{enumerate}
\end{lem}

\begin{proof}
For the first assertion, 
recall from Lemma \ref{calc} that 
we have 
$\mathcal{C}^0=f^*\Coh(C) \otimes \mathcal{O}(D)$. 
From this, we can see that 
$\Supp(T) \subset D$ 
for any element $T \in \mathcal{C}^0$. 
Moreover, we have 
$\mathcal{C}^0 \otimes \mathcal{O}(-D) 
\cong f^*\Coh(C) \subset \mathcal{T}$. 
This proves the inclusion 
$\mathcal{C}^0 \subset \mathcal{T}_{D}$. 

(1) First assume that 
$\Hom(\mathcal{T}_{D}, E)=0$. 
Since we assume $E$ is torsion free outside $D$, 
it is enough to show that 
$\Hom(\Coh(C), f_{*}(E(-D)))=0$. 
We can compute as  
\[
\begin{split}
\Hom(\Coh(C), f_{*}(E(-D))) 
&\cong \Hom(f^{*}\Coh(C), E(-D)) \\
&=\Hom(f^{*}\Coh(C) \otimes \mathcal{O}(D), E) \\
&=0. 
\end{split}
\]
Note that the last equality holds by 
the inclusion 
$\mathcal{C}^0 
\subset \mathcal{T}_{D}$. 

For the converse, assume that $f_{*}(E(-D))$ is torsion free. 
Let $T \in \mathcal{T}_{D}$. Then 
\[
\begin{split}
\Hom(T, E)
&=\Hom(T(-D), E(-D)) \\
&\subset \Hom(f^{*}f_{*}(T(-D)), E(-D)) \\
&\cong \Hom(f_{*}(T(-D)), f_{*}(E(-D))) \\ 
&=0. 
\end{split}
\]
Note that the adjoint map 
$f^{*}f_{*}(T(-D)) \to T(-D)$ 
is surjective since $T(-D) \in \Per(X/Y)$ 
by definition. 
Hence we have the inclusion 
$\Hom(T(-D), E(-D)) 
\subset \Hom(f^{*}f_{*}(T(-D)), E(-D))$. 
Note also that the last equality holds 
since $f_{*}(T(-D))$ is torsion. 

(2) By (1), it is enough to show that 
$\Hom(\mathcal{T}_{D}, E(D))=0$. 
Let $T \in \mathcal{T}_{D}$. Then 
we have $T(-D) \in \mathcal{T}_{D}$ 
by Corollary \ref{cor of criterion}. Hence 
\[
\Hom(T, E(D)) 
=\Hom(T(-D), E)=0.  
\]
Here, the last equality holds again by (1). 
\end{proof}


\subsection{From $0$-stability to $1$-stability}

\begin{prop}
\label{0-st to 1-st}
Let $E^{-} \in \Coh(X) \cap \Per(X/Y)$ 
be a perverse coherent sheaf 
with Chern character $\ch(E^-)=v$. 
Let 
\[
0 \to T \to E^{-} \to F \to 0
\]
be the unique exact sequence in $\Coh(X)$ with 
$T \in \mathcal{T}_{D}, F \in \mathcal{F}_{D}$. 
Then the following hold:  
\begin{enumerate}
\item If $f_{*}E^{-}$ is torsion free, then $T \in \mathcal{C}^0$. 
\item If $E^{-}$ is $0$-stable, 
then $F$ is $0$-stable and 
$1$-stable. 
\end{enumerate}
\end{prop}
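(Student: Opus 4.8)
The plan is to prove (1) by identifying $\mathcal{C}^0$ with the kernel of $\dR f_*$ and checking the two vanishings $f_*T=0$ and $\dR^1f_*T=0$ separately, and then to bootstrap (2) from (1) using the criterion of Lemma \ref{criterion for per} and the tools of Corollary \ref{cor of criterion} and Lemma \ref{criterion for tor free}.

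\emph{For (1).} Since $\mathcal{C}^0=\mathcal{C}\cap\Coh(X)=\{G\in\Coh(X):\dR f_*G=0\}$, it suffices to show $\dR f_*T=0$. First, left-exactness of $f_*$ gives an inclusion $f_*T\hookrightarrow f_*E^-$; as $\Supp(T)\subseteq D$ maps into the codimension-two locus $C$, the sheaf $f_*T$ is torsion, while $f_*E^-$ is torsion free by hypothesis, so $f_*T=0$. It remains to prove $\dR^1f_*T=0$. Here I use that $T\in\mathcal{T}_D$ forces $T(-D)\in\Per(X/Y)$, so Lemma \ref{criterion for per} gives $\Hom(T(-D),\mathcal{O}_{L_y}(-1))=0$ for every $y\in C$; since $\mathcal{O}_X(D)|_{L_y}\cong\mathcal{O}_{L_y}(-1)$, this reads $\Hom(T,\mathcal{O}_{L_y}(-2))=0$. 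By adjunction $\Hom(T,\mathcal{O}_{L_y}(-2))\cong\Hom_{L_y}(T|_{L_y},\mathcal{O}_{L_y}(-2))$, and Serre duality on $L_y\cong\mathbb{P}^1$ (whose dualizing sheaf is $\mathcal{O}_{L_y}(-2)$) identifies this with $H^1(L_y,T|_{L_y})^\vee$. Hence $H^1(L_y,T|_{L_y})=0$ for all $y\in C$, and trivially for $y\notin C$ where the fibre is a point; cohomology and base change then yields $\dR^1f_*T=0$. Together with $f_*T=0$ this gives $\dR f_*T=0$, i.e. $T\in\mathcal{C}^0$.

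\emph{For (2).} Assume $E^-$ is $0$-stable; in particular $f_*E^-$ is torsion free, so (1) applies and $T\in\mathcal{C}^0$, i.e. $\dR f_*T=0$. Applying $f_*$ to $0\to T\to E^-\to F\to0$ then gives $f_*F\cong f_*E^-$, which is $\mu_H$-stable. To see $F\in\Per(X/Y)$, apply $\Hom(-,\mathcal{O}_{L_y}(-1))$ to the same sequence: since $E^-\in\Per(X/Y)$ gives $\Hom(E^-,\mathcal{O}_{L_y}(-1))=0$, left-exactness yields $\Hom(F,\mathcal{O}_{L_y}(-1))=0$ for all $y$, so $F\in\Per(X/Y)$ by Lemma \ref{criterion for per}. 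Thus $F$ is $0$-stable. For $1$-stability I must show $F(-D)$ is $0$-stable. Now $F\in\Per(X/Y)$ gives $F(-D)\in\Per(X/Y)$ by Corollary \ref{cor of criterion}, and $F\in\mathcal{F}_D$ gives that $f_*(F(-D))$ is torsion free by Lemma \ref{criterion for tor free}(2). Finally, over $Y\setminus C$ the morphism $f$ is an isomorphism disjoint from $D$, so $f_*(F(-D))$ and $f_*E^-$ agree there; being torsion free and agreeing outside the codimension-two locus $C$, they are isomorphic in codimension one, and $\mu_H$-stability—which depends only on the sheaf in codimension one—transfers from $f_*E^-$ to $f_*(F(-D))$. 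Hence $F(-D)$ is $0$-stable and $F$ is $1$-stable.

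I expect the main obstacle to be the vanishing $\dR^1f_*T=0$ in (1): membership in $\mathcal{T}_D$ only supplies the $\Hom$-vanishing $\Hom(T(-D),\mathcal{O}_{L_y}(-1))=0$, and the real content is to convert this into a higher-direct-image vanishing, which is exactly what fibrewise Serre duality on the $\mathbb{P}^1$-fibres $L_y$ accomplishes (with cohomology and base change upgrading the fibrewise statement to the sheaf $\dR^1f_*T$). A secondary point requiring care is the transfer of $\mu_H$-stability across the codimension-two locus $C$ in (2), which rests on stability being a codimension-one notion for torsion-free sheaves.
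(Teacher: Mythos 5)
Your proof is correct and follows the same skeleton as the paper's --- the same torsion-pair decomposition, the same reduction of $0$-stability of $F$ to $f_{*}F\cong f_{*}E^-$, and the same transfer of $\mu_{H}$-stability from $f_{*}F$ to $f_{*}(F(-D))$ via agreement in codimension one --- but you justify the two key sub-steps differently. For $\dR^1f_{*}T=0$ in (1), the paper applies the Van den Bergh equivalence $\Phi=\dR f_{*}\dR\mathcal{H}om(\mathcal{O}_{X}\oplus\mathcal{O}_{X}(-D),-)$ of Theorem \ref{vdb}: since $T(-D)\in\Per(X/Y)$, the object $\Phi(T(-D))\cong\dR f_{*}(T(-D))\oplus\dR f_{*}T$ lies in $\Coh(\mathcal{A})$, i.e.\ is concentrated in degree zero, which gives the vanishing in one line. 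You instead convert the $\Hom$-vanishing supplied by Lemma \ref{criterion for per} into $H^1(L_{y},T|_{L_{y}})=0$ by fibrewise Serre duality and then invoke cohomology and base change; this works, but note that $T$ is not flat over $Y$ (it is supported on $D$), so the textbook base-change theorems do not apply verbatim --- you need the fact that the top-degree direct image commutes with base change without flatness (equivalently, right-exactness of $\dR^1f_{*}$ on sheaves with $1$-dimensional fibre support, plus Nakayama), or else the formal-functions induction over the infinitesimal neighbourhoods $L_{y,n}$ that the paper carries out in its proof of Lemma \ref{criterion for per}. For the torsion-freeness of $f_{*}(F(-D))$ in (2), the paper gives a direct diagram chase showing that any nonzero map $f^{*}M\otimes\mathcal{O}(D)\to F$ would enlarge the torsion part $T$ of the canonical decomposition, whereas you simply cite Lemma \ref{criterion for tor free}(2) using $F\in\mathcal{F}_{D}$; your route is cleaner and equally valid, provided one notes (as you implicitly do) that $F$ is torsion free outside $D$, which holds because $F\cong E^-\cong f_{*}E^-$ there.
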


\begin{proof}
First we prove (1). 
Since $T(-D) \in \Per(X/Y)$, 
we have 
$\dR^1f_{*}T=0$ 
by Lemma \ref{r1 per}. 
On the other hand, 
we have the injection 
$0 \to f_{*}T \to f_{*}E^{-}$
in $\Coh(Y)$. 
Since $T$ is supported on $D$ and $f_{*}E^{-}$ is 
torsion free, we have $f_{*}T=0$. 
This proves (1). 

Next assume that $E^{-}$ is $0$-stable. 
Since we have a surjection 
$E^{-} \to F \to 0$ 
in $\Coh(X)$, 
we have 
$F \in \Per(X/Y) \cap \Coh(X)=\mathcal{T}$ 
by Lemma \ref{property tor pair}. 
Furthermore,  the isomorphism 
$f_{*}E^{-} \cong f_{*}F$ 
implies 
$F$ is $0$-stable. 
It remains to show that 
$F$ is $1$-stable. 
By Corollary \ref{cor of criterion}, we have 
$F(-D) \in \Per(X/Y)$. 

\begin{claim}
$f_{*}(F(-D))$ is torsion free. 
\end{claim}

\begin{proof}
Let $M \in \Coh(C)$. 
We must show that 
$\Hom(M, f_{*}(F(-D)))=0$. 
By Lemma \ref{calc} and Lemma \ref{criterion for tor free}, 
we have 
$f^*M \otimes \mathcal{O}(D) 
\in \mathcal{C}^0 
\subset \mathcal{T}_{D}$. 
Hence by adjunction, we get 
\[
\Hom(M, f_{*}(F(-D))) 
\cong 
\Hom(f^{*}M \otimes \mathcal{O}(D), F)
=0. 
\]
\end{proof}

Now since $f_{*}(F(-D)) \to f_{*}F$ is an isomorpism 
outside $C$ and they are torsion free, 
it follows that 
$0 \to f_{*}(F(-D)) \to f_{*}F$ 
is injective in $\Coh(Y)$. 
Furthermore, by the facts that 
$f_{*}F$ is $\mu$-stable and 
$\mu(f_{*}(F(-D)))=\mu(f_{*}F)$, 
we conclude that $f_{*}(F(-D))$ is 
$\mu$-stable. 
This means that $F$ is 1-stable. 
\end{proof}

\begin{prop}
\label{0-st to 0-st}
Let $F \in M^0(v) \cap M^1(v)$ 
be a $0$-stable and $1$-stable sheaf 
with Chern character $\ch(F)=v$. 
Let $T \in \mathcal{C}^0$ with a surjective map 
$\psi \colon F \to T[1] \to 0$ 
in $\Per(X/Y)$. 
Let $E^{-}:=\Ker\psi \in \Per(X/Y)$. 
Then $E^{-}$ is $0$-stable but not $1$-stable. 
\end{prop}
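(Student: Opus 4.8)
The plan is to turn the given perverse epimorphism into a short exact sequence of honest coherent sheaves and then to read off the two stability statements by pushing forward along $f$, once directly and once after twisting by $\mathcal{O}(-D)$. First I would record that $T\in\mathcal{C}^{0}$ forces $f_{*}T=0$, hence $T\in\mathcal{F}$ and $T[1]\in\Per(X/Y)$; so $\psi$ is a genuine epimorphism in the abelian category $\Per(X/Y)$ and $0\to E^{-}\to F\to T[1]\to 0$ is exact there. Rotating the corresponding triangle in $D^{b}(X)$ to $T\to E^{-}\to F\to T[1]$ and taking the long exact sequence of cohomology sheaves, I would use that $T$ and $F$ are sheaves to collapse it to $\mathcal{H}^{i}(E^{-})=0$ for $i\neq 0$ together with
\[
0\to T\to E^{-}\to F\to 0 \quad\text{in } \Coh(X).
\]
Thus $E^{-}$ is a perverse coherent sheaf, so that Definition \ref{m-stability} genuinely applies to it. Throughout I assume $T\neq 0$, since otherwise $E^{-}\cong F$ is $1$-stable and the asserted conclusion cannot hold.

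For $0$-stability I would apply $\dR f_{*}$ to the displayed sequence. Since $\dR f_{*}T=0$ this gives $f_{*}E^{-}\cong f_{*}F$, and as $F$ is $0$-stable the sheaf $f_{*}F$ is $\mu_{H}$-stable; hence so is $f_{*}E^{-}$, and together with $E^{-}\in\Per(X/Y)$ this shows that $E^{-}$ is $0$-stable.

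For the failure of $1$-stability, note first that $E^{-}(-D)\in\Per(X/Y)$ by Corollary \ref{cor of criterion}, so $1$-stability of $E^{-}$ is equivalent to $\mu_{H}$-stability of $f_{*}(E^{-}(-D))$. I would twist the displayed sequence by $\mathcal{O}(-D)$ and again apply $\dR f_{*}$, the crucial term being the one on the left: writing $T\cong\pi^{*}M\otimes\mathcal{O}(D)$ with $0\neq M\in\Coh(C)$ gives $T(-D)\cong\pi^{*}M$, and since $\pi\colon D\to C$ is a $\mathbb{P}^{1}$-bundle one computes $\dR f_{*}(T(-D))\cong i_{*}M$, a nonzero sheaf supported on the codimension-$2$ locus $C$ and hence torsion. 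Left-exactness of $f_{*}$ then yields an inclusion $i_{*}M\hookrightarrow f_{*}(E^{-}(-D))$, so $f_{*}(E^{-}(-D))$ carries nonzero torsion and cannot be $\mu_{H}$-stable; therefore $E^{-}$ is not $1$-stable.

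The one genuinely geometric step, and the place I expect to have to be careful, is the computation $\dR f_{*}(T(-D))\cong i_{*}M$: this is the mirror image of the torsion-freeness Claim inside the proof of Proposition \ref{0-st to 1-st}, where the torsion pair was arranged so that twisting $F$ down by $D$ leaves $f_{*}(F(-D))$ torsion free, whereas here the extra summand $T$ is exactly what reintroduces torsion after twisting. The remaining points—verifying via cohomology sheaves that $E^{-}$ really is a sheaf, and keeping track of the standing hypothesis $T\neq 0$—are routine but must not be skipped, as both are needed for the statement even to make sense.
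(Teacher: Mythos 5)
Your proof is correct, and for the $0$-stability half it is exactly the paper's argument: apply $\dR f_{*}$ to the sheaf-level sequence $0\to T\to E^{-}\to F\to 0$, use $\dR f_{*}T=0$ to get $f_{*}E^{-}\cong f_{*}F$, and conclude from the $0$-stability of $F$. For the failure of $1$-stability the paper is terser: it observes that $\Hom(T,E^{-})\neq 0$ with $T\in\mathcal{C}^{0}\subset\mathcal{T}_{D}$ and invokes Lemma \ref{criterion for tor free}\,(2) to conclude that $f_{*}(E^{-}(-D))$ is not torsion free. You instead unwind that lemma's content in this special case, writing $T\cong\pi^{*}M\otimes\mathcal{O}(D)$ and computing $\dR f_{*}(T(-D))\cong i_{*}M$ directly, so that the torsion subsheaf $i_{*}M\hookrightarrow f_{*}(E^{-}(-D))$ is exhibited explicitly. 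The two routes identify the same obstruction; yours is more self-contained and makes the geometric source of the torsion visible, while the paper's is shorter because the relevant adjunction computation has already been packaged into the torsion-pair criterion. Your extra care in checking that $E^{-}$ is an honest sheaf via the rotated triangle, and in flagging the implicit hypothesis $T\neq 0$, addresses points the paper leaves tacit.
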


\begin{proof}
By shifting the exact triangle 
$E^- \to F \to T[1]$, 
we get an exact triangle 
$T \to E^- \to F$. 
Since $T, F \in \Coh(X)$, 
we also have 
$E^- \in \Coh(X)$. 
Furthermore, since 
$E^{-} \in \Per(X/Y)$ and 
$f_{*}E^{-} \cong f_{*}F$, 
$E^{-}$ is $0$-stable. 
On the other hand, 
since $T \in \mathcal{C}^0$ and 
$\Hom(T, E^{-}) \neq 0$, 
$E^{-}$ is not $1$-stable by 
Lemma \ref{criterion for tor free}. 
\end{proof}


\subsection{From $1$-stability to $0$-stability}

\begin{prop}
\label{1-st to 0-st}
Let $E^+ \in M^1(v)$ 
be a $1$-stable sheaf with 
Chern character $\ch(E^+)=v$. 
Let 
\[
0 \to F \to E^{+} \to T \to 0
\]
be the unique exact sequence in $\Coh(X)$ with 
$F \in \mathcal{T}$, $T \in \mathcal{F}$. 
Then we have 
\begin{enumerate}
\item $T \in \mathcal{C}^0$. 
\item $F$ is $0$-stable and $1$-stable. 
\end{enumerate}
\end{prop}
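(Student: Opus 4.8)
The plan is to treat this as the mirror image of Proposition \ref{0-st to 1-st}, now running the argument through the torsion pair $(\mathcal{T}, \mathcal{F})$ rather than $(\mathcal{T}_{D}, \mathcal{F}_{D})$. The backbone is the long exact sequence obtained by applying $f_{*}$ to $0 \to F \to E^{+} \to T \to 0$: since $F \in \mathcal{T}$ gives $\dR^1 f_{*}F=0$ and $T \in \mathcal{F}$ gives $f_{*}T=0$, and the fibres of $f$ are at most one-dimensional, the sequence collapses to an isomorphism $f_{*}F \cong f_{*}E^{+}$ together with $\dR^1 f_{*}E^{+} \cong \dR^1 f_{*}T$. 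Because $f_{*}T=0$ forces $T$ to be supported on $D$, claim (1) reduces to the single vanishing $\dR^1 f_{*}T=0$, i.e. to $\dR^1 f_{*}E^{+}=0$; once this is known, $\dR f_{*}T=0$ and hence $T \in \mathcal{C} \cap \Coh(X)=\mathcal{C}^0$.

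To prove $\dR^1 f_{*}E^{+}=0$ I would rerun the fibrewise argument from the proof of Lemma \ref{criterion for per}. The input is that $E^{+}$ is $1$-stable, so $E^{+}(-D) \in \Per(X/Y)$, which by Lemma \ref{criterion for per} means $\Hom(E^{+}(-D), \mathcal{O}_{L_{y}}(-1))=0$, i.e. $\Hom(E^{+}, \mathcal{O}_{L_{y}}(-2))=0$ for every $y \in C$ (using $\mathcal{O}(D)|_{L_{y}} \cong \mathcal{O}_{L_{y}}(-1)$). By formal functions it suffices to rule out $H^1(L_{y}, E^{+}|_{L_{y}}) \neq 0$; but a nonzero $H^1$ forces a quotient line bundle $\mathcal{O}_{L_{y}}(a)$ with $a \leq -2$ on some fibre, and composing the surjection $E^{+} \to \mathcal{O}_{L_{y}}(a)$ with a nonzero map $\mathcal{O}_{L_{y}}(a) \to \mathcal{O}_{L_{y}}(-2)$ contradicts the vanishing just recorded. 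The higher infinitesimal steps go exactly as in Lemma \ref{criterion for per}. This settles (1).

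For the $0$-stability in (2), note that $F \in \mathcal{T} \subset \Per(X/Y)$ is automatic, so it remains to see that $f_{*}F \cong f_{*}E^{+}$ is $\mu_{H}$-stable. I know $f_{*}(E^{+}(-D))$ is $\mu_{H}$-stable by $1$-stability of $E^{+}$, so the idea is to transport stability up the twist. Multiplication by the canonical section of $\mathcal{O}(D)$ gives $E^{+}(-D) \to E^{+}$; its kernel and cokernel are supported on $D$, so pushing forward and using that $f_{*}(E^{+}(-D))$ is torsion free (hence absorbs the torsion contribution living over $C$) yields an injection $f_{*}(E^{+}(-D)) \hookrightarrow f_{*}E^{+}$ which is an isomorphism away from the codimension-two locus $C$. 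Thus $f_{*}(E^{+}(-D))$ and $f_{*}E^{+}$ have equal rank and slope, and a short slope comparison on quotients shows that a full-rank subsheaf of equal slope which is itself $\mu_{H}$-stable forces the ambient sheaf to be $\mu_{H}$-stable. Hence $f_{*}F$ is $\mu_{H}$-stable and $F$ is $0$-stable.

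For the $1$-stability, $F(-D) \in \Per(X/Y)$ holds by Corollary \ref{cor of criterion}, so I must show $f_{*}(F(-D))$ is $\mu_{H}$-stable. First I would check $f_{*}(F(-D))$ is torsion free: since $E^{+}$ is torsion free outside $D$ and $f_{*}(E^{+}(-D))$ is torsion free, Lemma \ref{criterion for tor free}(2) gives $\Hom(\mathcal{T}_{D}, E^{+})=0$, and the inclusion $F \subset E^{+}$ yields $\Hom(\mathcal{T}_{D}, F) \subset \Hom(\mathcal{T}_{D}, E^{+})=0$, whence $F \in \mathcal{F}_{D}$ and, by Lemma \ref{criterion for tor free}(2) again, $f_{*}(F(-D))$ is torsion free. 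Then exactly as above the section of $\mathcal{O}(D)$ produces an injection $f_{*}(F(-D)) \hookrightarrow f_{*}F$ which is an isomorphism outside $C$, so the two have equal rank and slope; since $f_{*}F$ is already $\mu_{H}$-stable, any proper lower-rank subsheaf of $f_{*}(F(-D))$ is a proper lower-rank subsheaf of $f_{*}F$ and hence has slope strictly less than $\mu_{H}(f_{*}F)=\mu_{H}(f_{*}(F(-D)))$, so $f_{*}(F(-D))$ is $\mu_{H}$-stable and $F$ is $1$-stable. The main obstacle I anticipate is the bookkeeping of torsion supported on $D$ when twisting by $\mathcal{O}(\pm D)$: verifying that the pushforward maps $f_{*}(E^{+}(-D)) \to f_{*}E^{+}$ and $f_{*}(F(-D)) \to f_{*}F$ are genuinely injective with cokernel supported in codimension two — i.e. that no torsion over $C$ survives — is where the torsion-freeness of the downstairs sheaves must be invoked carefully, and it is the crux on which both stability transfers rest.
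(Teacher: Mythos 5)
Your argument is correct and follows the paper's proof in all essentials: $T$ is placed in $\mathcal{C}^0$ by combining $f_{*}T=0$ with a vanishing of $\dR^1f_{*}$, and both stabilities of $F$ are obtained by transferring $\mu_{H}$-stability across full-rank inclusions among $f_{*}(F(-D))$, $f_{*}(E^{+}(-D))$, $f_{*}F$ and $f_{*}E^{+}$ whose cokernels are supported on the codimension-two locus $C$. The only local deviations are that you re-derive $\dR^1f_{*}E^{+}=0$ by the fibrewise/formal-functions argument of Lemma \ref{criterion for per} using $\Hom(E^{+},\mathcal{O}_{L_{y}}(-2))=0$ (where the paper instead observes that $T(-D)$ is a quotient of $E^{+}(-D)$ in $\Coh(X)$, hence perverse, and reads off $\dR^1f_{*}T=0$ as in Proposition \ref{0-st to 1-st}(1)), and that you prove $0$-stability before $1$-stability by pushing stability upward from the subsheaf $f_{*}(E^{+}(-D))\subset f_{*}E^{+}$ (taking care, as you do via Lemma \ref{criterion for tor free}, that $f_{*}F$ is torsion free), whereas the paper first gets $1$-stability from the pushforward of $0\to F(-D)\to E^{+}(-D)\to T(-D)\to 0$ and then invokes Lemma \ref{criterion for tor free}(3); neither change affects correctness.
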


\begin{proof}
(1) By definition, we have $f_{*}T=0$. 
On the other hand, since 
$E^{+}(-D) \in \Per(X/Y) \cap \Coh(X)=\mathcal{T}$, 
it follows that 
$T(-D) \in \Per(X/Y)$ 
by Lemma \ref{property tor pair}. 
Hence by Lemma \ref{r1 per}, we get $\dR^1f_{*}T=0$. 

(2) By definition, $F \in \Per(X/Y)$. 
By Corollary \ref{cor of criterion}, 
we also have $F(-D) \in \Per(X/Y)$. 
First we will show that $F$ is $1$-stable. 
We have an exact sequence 
\[
0 \to f_{*}(F(-D)) \to f_{*}(E^{+}(-D)) \to f_{*}(T(-D))
\]
in $\Coh(Y)$. 
Since $\Supp(f_{*}(T(-D))) \subset C$ 
and $f_{*}(E^{+}(-D))$ is $\mu$-stable, 
it follows that $f_{*}(F(-D))$ is 
also $\mu$-stable. 
We conclude that $F$ is $1$-stable. 

It remains to show that $F$ is $0$-stable. 
As in the argument of the previous proposition, 
it is enough to show that 
$f_{*}F$ is torsion free. 
But that follows from Lemma \ref{criterion for tor free} (3).  
\end{proof}

\begin{prop}
\label{1-st to 1-st}
Let $F \in M^0(v) \cap M^1(v)$ 
be a $0$-stable and $1$-stable sheaf 
with Chern character $\ch(F)=v$. 
Let $T \in \mathcal{C}^0$ with an injective map 
$0 \to T \to F[1]$ in 
$\widetilde{\Per}(X/Y)$. 
Let $E^{+}[1]$ be its cokernel. 
Then $E^{+}$ is $1$-stable but not $0$-stable. 
\end{prop}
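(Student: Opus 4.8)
The plan is to reverse the construction of Proposition~\ref{1-st to 0-st}, treating the datum $0 \to T \to F[1]$ as (the Yoneda class of) an extension of sheaves.

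First I would unwind the definition of $E^{+}$. The injection $0 \to T \to F[1]$ in $\widetilde{\Per}(X/Y)$ with cokernel $E^{+}[1]$ gives a distinguished triangle $T \to F[1] \to E^{+}[1] \to T[1]$, which after a shift reads $F \to E^{+} \to T \to F[1]$. Since $F$ and $T$ are sheaves, the long exact sequence of cohomology sheaves shows $\mathcal{H}^{i}(E^{+})=0$ for $i \neq 0$ and yields a short exact sequence $0 \to F \to E^{+} \to T \to 0$ in $\Coh(X)$; in particular $E^{+}$ is a genuine sheaf. I would record two consequences at once. Because $T \in \mathcal{C}^0$ is supported on $D$, the sheaf $E^{+}$ agrees with $F$ away from $D$ and is therefore torsion free outside $D$ (recall that $f_{*}F$ is torsion free, as $F$ is $0$-stable). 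Moreover $E^{+}[1]$ is by construction the cokernel of a monomorphism in the heart $\widetilde{\Per}(X/Y)$, hence $E^{+}[1] \in \widetilde{\Per}(X/Y)$; reading off its cohomology with respect to the torsion pair $(\mathcal{T}_{D}, \mathcal{F}_{D})$ forces $E^{+}=\mathcal{H}^{-1}(E^{+}[1]) \in \mathcal{F}_{D}$. This last point is the crucial input for stability.

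Next I would prove that $E^{+}$ is $1$-stable, i.e. that $E^{+}(-D)$ is $0$-stable. Twisting the short exact sequence by $\mathcal{O}(-D)$ gives $0 \to F(-D) \to E^{+}(-D) \to T(-D) \to 0$. For membership in $\Per(X/Y)$ I would apply Lemma~\ref{criterion for per}: applying $\Hom(-, \mathcal{O}_{L_{y}}(-1))$, the outer terms vanish because $F(-D) \in \Per(X/Y)$ (as $F$ is $1$-stable) and because $T(-D) \in \pi^{*}\Coh(C)$ restricts to a trivial bundle on each fibre $L_{y} \cong \mathbb{P}^1$, so that $\Hom(T(-D), \mathcal{O}_{L_{y}}(-1))=0$; hence $E^{+}(-D) \in \Per(X/Y)$. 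For the slope stability, the fact that $E^{+} \in \mathcal{F}_{D}$ together with Lemma~\ref{criterion for tor free} (2) shows that $f_{*}(E^{+}(-D))$ is torsion free. Applying $f_{*}$ to the twisted sequence and using $\dR^1 f_{*}(F(-D))=0$ (which holds since $F(-D) \in \Per(X/Y) \cap \Coh(X)$) gives
\[
0 \to f_{*}(F(-D)) \to f_{*}(E^{+}(-D)) \to i_{*}M \to 0,
\]
whose cokernel $i_{*}M=f_{*}(T(-D))$ is a torsion sheaf supported on the codimension two locus $C$. Consequently $f_{*}(E^{+}(-D))$ has the same rank and slope as the $\mu_{H}$-stable sheaf $f_{*}(F(-D))$, and a short slope comparison (intersect an arbitrary subsheaf with $f_{*}(F(-D))$ and note that the resulting quotient, being supported in codimension two, contributes nothing to $c_{1}.H^{n-1}$) shows that the torsion free sheaf $f_{*}(E^{+}(-D))$ is again $\mu_{H}$-stable. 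Thus $E^{+}(-D)$ is $0$-stable, i.e. $E^{+}$ is $1$-stable.

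Finally, to see that $E^{+}$ is \emph{not} $0$-stable I would simply observe that the surjection $E^{+} \twoheadrightarrow T$ is a nonzero morphism into $\mathcal{C}^0$ (here $T \neq 0$, since otherwise $E^{+}=F$ would already be $0$-stable). As every object of $\Per(X/Y) \cap \Coh(X)$ satisfies $\Hom(E^{+}, \mathcal{C}^0)=0$ by the very definition of the perverse heart, we conclude $E^{+} \notin \Per(X/Y)$, so $E^{+}$ cannot be $0$-stable; this mirrors the last line of the proof of Proposition~\ref{0-st to 0-st}. The main obstacle is the $\mu_{H}$-stability of $f_{*}(E^{+}(-D))$: a priori, enlarging a stable sheaf by a codimension two quotient could destroy both torsion freeness and stability. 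The resolution is the membership $E^{+} \in \mathcal{F}_{D}$, handed to us for free by the fact that $E^{+}[1]$ is a cokernel in $\widetilde{\Per}(X/Y)$; through Lemma~\ref{criterion for tor free} (2) this restores torsion freeness, after which the slope comparison is routine because neither the rank nor $c_{1}.H^{n-1}$ sees the codimension two locus $C$.
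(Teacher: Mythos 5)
Your proof is correct and follows essentially the same route as the paper: extract $E^{+} \in \mathcal{F}_{D}$ from the heart structure of $\widetilde{\Per}(X/Y)$, use Lemma \ref{criterion for tor free} (2) to get torsion-freeness of $f_{*}(E^{+}(-D))$, compare with the $\mu$-stable sheaf $f_{*}(F(-D))$ modulo a cokernel supported in codimension two, and rule out $0$-stability via $\Hom(E^{+}, T) \neq 0$ with $T \in \mathcal{C}^0$. The only difference is that you also verify $E^{+}(-D) \in \Per(X/Y)$ explicitly via Lemma \ref{criterion for per}, a step the paper's proof leaves implicit.
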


\begin{proof}
First note that $F \in \mathcal{F}_{D}$ 
and hence $F[1] \in \widetilde{\Per}(X/Y)$. 
Furthermore, $\mathcal{F}_{D}[1]$ is a torsion part of 
the torsion pair $(\mathcal{F}_{D}[1], \mathcal{T}_{D})$
on $\widetilde{\Per}(X/Y)$. 
Hence $\mathcal{F}_{D}[1]$ is closed under taking quotients 
in the abelian category $\widetilde{\Per}(X/Y)$. 
In particular, $E^{+} \in \mathcal{F}_{D}$ and hence 
$f_{*}(E^{+}(-D))$ is torsion free. 
Applying $f_{*}$ to the exact sequence 
\[
0 \to F \to E^{+} \to T \to 0
\]
in $\Coh(X)$, we have an injection 
$0 \to f_{*}(F(-D)) \to f_{*}(E^{+}(-D))$ 
which is isomorphism outside $C$. 
Hence the $\mu$-stability of $f_{*}(F(-D))$ implies that 
$f_{*}(E^{+}(-D))$ is $\mu$-stable. 
On the other hand, since $T \in \mathcal{C}^0$ and 
$\Hom(E^{+}, T) \neq 0$, 
we have $E^{+} \not \in \Per(X/Y)$. 
In particular, $E^{+}$ is not $0$-stable.

\end{proof}


\subsection{Set-theoretical wall-crossing}

Define 
$\mathcal{S}:=
\{
\ch(T) : T \in \mathcal{C}^0
\}. 
$
First we define the following two notions. 
Note that for a $0$-stable and $1$-stable sheaf $F$, 
we have $F \in \Per(X/Y)$ by definition. 
Furthermore, by Lemma \ref{criterion for tor free}, 
we have $F \in \mathcal{F}_{D}$ and thus 
$F[1] \in \widetilde{\Per}(X/Y)$ 
(see Remark \ref{rmk tilper} 
for the definition of 
$\widetilde{\Per}(X/Y)$). 
\begin{defin}
\label{pquot}
Let $F$ be a $0$-stable and $1$-stable sheaf 
and $\beta \in \mathcal{S}$. 
We define 
\begin{align*}
&\PSub(F, \beta):=
\{
0 \to E \to F \to T[1]  \to 0 \mbox{ exact in } \Per(X/Y) : 
T \in \mathcal{C}^0, \ch(T)=\beta
\}, \\
&\widetilde{\PQuot}(F, \beta):=
\{
0 \to T \to F[1] \to E[1] \to 0 \mbox{ exact in } \widetilde{\Per}(X/Y) : 
T \in \mathcal{C}^0, \ch(T)=\beta
\}. 
\end{align*}
\end{defin}

Summarizing the results in the previous subsections, 
we get: 

\begin{prop}
\label{set}
We have a diagram of sets 
\[
\xymatrix{
&M^0(v) \ar[rd]_{\xi_{0}^{-}} & &M^1(v) \ar[ld]^{\xi_{0}^{+}} \\
& &M^{0, 1}(v) & 
}
\]
such that 
\begin{itemize}
\item 
$M^{0, 1}(v):=\coprod_{\beta \in \mathcal{S}} 
(M^0(v-\beta) \cap M^1(v-\beta))$, 
\item 
the fibre of $F \in M^0(v-\beta) \cap M^1(v-\beta)$ 
over $\xi_{0}^{-}$ is $\PSub(F, \beta)$, 
\item the fibre of $F \in M^0(v-\beta) \cap M^1(v-\beta)$ 
over $\xi_{0}^{+}$ is $\widetilde{\PQuot}(F, \beta)$. 
\end{itemize}
\end{prop}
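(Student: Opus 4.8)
The plan is to obtain both arrows, together with the descriptions of their fibres, by assembling the four propositions of the preceding subsections; the only genuine content is to match the short exact sequences in $\Coh(X)$ produced by the torsion pairs $(\mathcal{T}_{D},\mathcal{F}_{D})$ and $(\mathcal{T},\mathcal{F})$ with the short exact sequences in the tilted hearts $\Per(X/Y)$ and $\widetilde{\Per}(X/Y)$ occurring in the definitions of $\PSub$ and $\widetilde{\PQuot}$.

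First I would define the maps. Given $E^{-}\in M^0(v)$, its $0$-stability forces $f_{*}E^{-}$ to be $\mu_{H}$-stable, hence torsion free; decomposing $E^{-}$ for the torsion pair $(\mathcal{T}_{D},\mathcal{F}_{D})$ as $0\to T\to E^{-}\to F\to 0$, Proposition \ref{0-st to 1-st}(1) gives $T\in\mathcal{C}^0$, so that $\beta:=\ch(T)\in\mathcal{S}$, while (2) gives that $F$ is $0$-stable and $1$-stable; since $\ch(F)=v-\beta$ by additivity we get $F\in M^0(v-\beta)\cap M^1(v-\beta)$, and I set $\xi_{0}^{-}(E^{-}):=F$. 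Symmetrically, for $E^{+}\in M^1(v)$ I decompose for $(\mathcal{T},\mathcal{F})$ as $0\to F\to E^{+}\to T\to 0$ and invoke Proposition \ref{1-st to 0-st} to obtain $T\in\mathcal{C}^0$ and $F\in M^0(v-\beta)\cap M^1(v-\beta)$, and set $\xi_{0}^{+}(E^{+}):=F$. This realizes the two arrows into $M^{0,1}(v)$.

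Next I would identify the fibre of $\xi_{0}^{-}$ over $F\in M^0(v-\beta)\cap M^1(v-\beta)$ with $\PSub(F,\beta)$. Since $T\in\mathcal{C}^0\subset\mathcal{F}$ we have $T[1]\in\Per(X/Y)$, and as $E^{-},F\in\Per(X/Y)$ the rotation of the triangle $T\to E^{-}\to F\to T[1]$ is a short exact sequence $0\to E^{-}\to F\to T[1]\to 0$ in $\Per(X/Y)$, i.e. an element of $\PSub(F,\beta)$; this defines a map from the fibre to $\PSub(F,\beta)$. For the inverse, a sequence $0\to E\to F\to T[1]\to 0$ in $\PSub(F,\beta)$ yields, by Proposition \ref{0-st to 0-st}, a $0$-stable $E=\Ker(F\to T[1])$ with $\ch(E)=v$, hence $E\in M^0(v)$; rotating back produces $0\to T\to E\to F\to 0$ in $\Coh(X)$, and because $T\in\mathcal{C}^0\subset\mathcal{T}_{D}$ by Lemma \ref{criterion for tor free}(1) and $F\in\mathcal{F}_{D}$ by Lemma \ref{criterion for tor free}(2) (using that $F$ is $1$-stable and torsion free outside $D$), this is precisely the $(\mathcal{T}_{D},\mathcal{F}_{D})$-decomposition of $E$. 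Uniqueness of that decomposition forces $\xi_{0}^{-}(E)=F$, so the two assignments are mutually inverse.

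The fibre of $\xi_{0}^{+}$ is handled in the same way, replacing $\Per(X/Y)$ by $\widetilde{\Per}(X/Y)=\langle\mathcal{F}_{D}[1],\mathcal{T}_{D}\rangle$: from $0\to F\to E^{+}\to T\to 0$ in $\Coh(X)$, with $F,E^{+}\in\mathcal{F}_{D}$ (so $F[1],E^{+}[1]\in\widetilde{\Per}(X/Y)$) and $T\in\mathcal{C}^0\subset\mathcal{T}_{D}$, rotation gives $0\to T\to F[1]\to E^{+}[1]\to 0$ in $\widetilde{\Per}(X/Y)$, an element of $\widetilde{\PQuot}(F,\beta)$; conversely Proposition \ref{1-st to 1-st} turns such a sequence into a $1$-stable $E^{+}\in M^1(v)$ whose $(\mathcal{T},\mathcal{F})$-decomposition, read off by rotating back and using $\Per(X/Y)\cap\Coh(X)=\mathcal{T}$ together with $\mathcal{C}^0\subset\mathcal{F}$, returns $F$. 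The main point to get right, and essentially the only nonformal step, is precisely this bookkeeping: that rotating a torsion-pair short exact sequence lands in the correct tilted heart, so that the conditions defining $\PSub$ and $\widetilde{\PQuot}$ are verified, and that the uniqueness of the torsion-pair decompositions makes the two constructions inverse to one another; the remaining assertions are direct quotations of Propositions \ref{0-st to 1-st}, \ref{0-st to 0-st}, \ref{1-st to 0-st}, and \ref{1-st to 1-st}.
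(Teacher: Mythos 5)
Your proposal is correct and follows the same route as the paper: both define $\xi_0^{\mp}$ via the canonical $(\mathcal{T}_D,\mathcal{F}_D)$- and $(\mathcal{T},\mathcal{F})$-decompositions, cite Propositions \ref{0-st to 1-st} and \ref{1-st to 0-st} for well-definedness, and identify the fibres via the converse constructions of Propositions \ref{0-st to 0-st} and \ref{1-st to 1-st}. You merely spell out the rotation bookkeeping between the torsion-pair sequences in $\Coh(X)$ and the short exact sequences in the tilted hearts, which the paper leaves implicit.
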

\begin{proof}
We define the map $\xi^-_{0}$ as follows: 
Take an element $E^- \in M^0(v)$, 
let 
$0 \to T \to E^- \to F \to 0$ 
be the canonical exact sequence 
with $T \in \mathcal{T}_{D}$, 
$F \in \mathcal{F}_{D}$. 
Then $F$ is an element of $M^{0, 1}(v)$ 
by Proposition \ref{0-st to 1-st}. 
Hence we define 
$\xi^{-}_{0}(E^-):=F$. 
By the converse construction given in 
Proposition \ref{0-st to 0-st}, 
the fiber of $\xi^{-}_{0}$ is given by 
$\PSub(F, \beta)$. 

The map $\xi_{0}^+$ is defined as follows: 
for an element $E^+ \in M^1(v)$, 
we have an exact sequence 
$0 \to F \to E^+ \to T \to 0$ 
with $F \in \mathcal{T}$ and 
$T \in \mathcal{F}$. 
Now we define as $\xi^+_{0}(E^+):=F$. 
Then the assersion follows from 
Proposition \ref{1-st to 0-st} and 
Proposition \ref{1-st to 1-st}. 
\end{proof}

In Section \ref{scheme str}, 
we will construct the scheme-theoretic wall-crossing diagram. 


\section{Moduli space of Gieseker stable sheaves}
\label{Gieseker}
In this section, we will see the 
relationship between 
$m$-stability and 
the Gieseler stability on both $X$ and $Y$. 

\subsection{Moduli space of Gieseker stable sheaves on $Y$}
Take 
$v=(v_{0}, v_{1}, \cdots) 
\in H^{2*}(X, \mathbb{Q})$ 
with $\gcd(f^{*}H^{n-1}.v_{1}, v_{0})=1$, 
and let 
$w:=f_{*}(v.\td_{X}).\td_{Y}^{-1}$. 
Denote  by $M^{H}(w)$ the moduli space of 
Gieseker $H$-stable sheaves on $Y$ 
with Chern character $v'$. 
Then we have a morphism of schemes: 
\[
\xi \colon M^0(v) \to M^{H}(w), \ 
E \mapsto \dR f_{*}E=f_{*}E. 
\]

Similarly, we have 
\[
\xi^{+} \colon M^1(v) \to M^{H}(w), \ 
E \mapsto \dR f_{*}E=f_{*}E. 
\]

Note that $\xi^{+}$ is well-defined. 
Indeed, take an element $E \in M^1(v)$. 
Then by Lemma \ref{criterion for tor free}, 
$f_{*}E$ is torsion free. 
Furthermore, since $f_{*}E(-D)$ is 
$\mu$-stable and is isomorphic to 
$f_{*}E$ in codimension $1$, 
$f_{*}E$ is also $\mu$-stable. 
Hence $\xi^{+}$ is actually a morphism 
from $M^1(v)$ to $M^{H}(w)$. 

\begin{lem}
\label{pull back in per}
Let $F \in \Coh(Y)$ be a torsion free sheaf on $Y$. 
Then we have 
$\dL f^{*}F=f^{*}F$. 
\end{lem}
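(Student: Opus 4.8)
The plan is to prove the equivalent statement that the higher cohomology sheaves $\mathcal{H}^{-i}(\dL f^{*}F) \cong \mathcal{T}or_{i}^{\mathcal{O}_{Y}}(\mathcal{O}_{X}, F)$ vanish for all $i>0$. Since this is a local question on $X$ and $f$ is an isomorphism over $Y \setminus C$, it suffices to compute these sheaves in the two standard affine charts covering a neighbourhood of the exceptional divisor $D$; outside $D$ there is nothing to check.

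First I would fix a point $y \in C$ and pass to the local ring $R:=\mathcal{O}_{Y,y}$. Because $C$ is a smooth subvariety of codimension two in the smooth variety $Y$, its ideal $I_{C}$ is locally generated by a regular sequence $x_{1}, x_{2} \in R$. In the affine chart of $X=\Bl_{C}Y$ on which $x_{1}$ generates the exceptional ideal, the coordinate ring is $S=R[t]/(x_{2}-tx_{1})$, where $t$ corresponds to $x_{2}/x_{1}$. Since $R[t]$ is a domain and $x_{2}-tx_{1}\neq 0$, multiplication by $x_{2}-tx_{1}$ is injective, so
\[
0 \to R[t] \xrightarrow{\,\cdot (x_{2}-tx_{1})\,} R[t] \to S \to 0
\]
is a length-one free resolution of $S$ as an $R$-module, each term being $R$-free because $R[t]=\bigoplus_{n \geq 0} R t^{n}$.

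Writing $M:=F_{y}$ for the (torsion-free) stalk, tensoring this resolution with $M$ over $R$ computes $\mathcal{T}or_{\bullet}^{R}(S,M)$ as the homology of $M[t] \xrightarrow{\,\cdot(x_{2}-tx_{1})\,} M[t]$. As the resolution has length one, $\mathcal{T}or_{i}=0$ for $i \geq 2$ automatically, and $\mathcal{T}or_{1}^{R}(S,M)$ is the kernel of multiplication by $x_{2}-tx_{1}$. Comparing coefficients of powers of $t$ in $(x_{2}-tx_{1})\sum_{k} m_{k}t^{k}=0$ yields $x_{2}m_{0}=0$ and $x_{2}m_{k}=x_{1}m_{k-1}$ for $k \geq 1$; since $R$ is a domain and $M$ is torsion free, $x_{2}m_{0}=0$ forces $m_{0}=0$, and the remaining relations then give $m_{k}=0$ for all $k$ by induction. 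Hence $\mathcal{T}or_{1}=0$. The identical computation applies to the chart where $x_{2}$ generates the exceptional ideal, so all higher Tor sheaves vanish and $\dL f^{*}F=f^{*}F$.

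The point where torsion-freeness is indispensable is precisely the vanishing of $\mathcal{T}or_{1}$, namely the implication $x_{2}m_{0}=0 \Rightarrow m_{0}=0$, which fails for sheaves with torsion such as $\mathcal{O}_{C}$. I expect the only mildly delicate step to be pinning down the explicit chart $S=R[t]/(x_{2}-tx_{1})$ from the regular sequence $(x_{1},x_{2})$; once the two-term resolution is in hand, the Tor computation is routine. An alternative that avoids local coordinates is to observe that the same resolution shows $\mathcal{O}_{X}$ has Tor-dimension at most one over $\mathcal{O}_{Y}$, to embed the torsion-free sheaf $F$ into a locally free sheaf $E$ with cokernel $Q$, and to read off $\mathcal{T}or_{1}(\mathcal{O}_{X},F) \cong \mathcal{T}or_{2}(\mathcal{O}_{X},Q)=0$ from the long exact sequence of Tor.
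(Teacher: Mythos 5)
Your proof is correct, but it takes a genuinely different route from the paper. The paper argues categorically: it first shows $\dL f^{*}F \in \Per(X/Y)$ by checking Bridgeland's three conditions (the orthogonality $\Hom(\mathcal{C}^{<-1}, \dL f^{*}F)=0$ being deduced by adjunction with $f_{!}$ and the torsion-freeness of $F$), then shows $\mathcal{H}^{-1}(\dL f^{*}F) \in \mathcal{C}^0$ via the spectral sequence $\dR^{p}f_{*}\dL^{q}f^{*}F \Rightarrow F$, and concludes that this cohomology sheaf must vanish from the orthogonality already established. You instead compute directly: in the local charts $\Spec R[t]/(x_{2}-tx_{1})$ of the blow-up of a regular sequence $(x_{1},x_{2})$, the structure sheaf $\mathcal{O}_{X}$ admits the explicit two-term free resolution $0 \to R[t] \to R[t] \to S \to 0$, so $\mathcal{T}or_{i}$ vanishes for $i\geq 2$ for free, and $\mathcal{T}or_{1}$ is the kernel of multiplication by $x_{2}-tx_{1}$ on $M[t]$, which you correctly kill by the coefficient-by-coefficient induction using torsion-freeness of $M$. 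Your computation is sound (including the identification of the chart from the Rees algebra and the reduction to stalks, where Tor commutes with the flat base change $\Spec\mathcal{O}_{Y,y}\to Y$), and it is more elementary and more informative: it isolates exactly where torsion-freeness enters and shows in passing that $f$ has Tor-dimension one, so $\dL f^{*}$ of \emph{any} sheaf is a two-term complex. What the paper's longer route buys is the intermediate fact $\dL f^{*}F \in \Per(X/Y)$, which is exactly what is needed in Corollary \ref{xi is isom} for the inverse map $\eta \colon F \mapsto f^{*}F$ to land in $M^{0}(v)$; with your argument one would still have to verify $f^{*}F \in \Per(X/Y)$ separately there (e.g.\ via Lemma \ref{criterion for per}), which is easy but not automatic.
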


\begin{proof}
First we claim that $\dL f^{*}F \in \Per(X/Y)$. 
We need to check the following three conditions
(cf. \cite[Lemma 3.2]{bri02}): 
\begin{enumerate}
\item[(a)] $\dR f_{*}(\dL f^{*}F) \in \Coh(Y)$, 
\item[(b)] $\Hom(\dL f^{*}F, \mathcal{C}^{>-1})=0$, 
\item[(c)] $\Hom(\mathcal{C}^{<-1}, \dL f^{*}F)=0$. 
\end{enumerate}

(a), (b) are clear. We will show (c). 
Let $T \in \mathcal{C}^0$, $i \geq 1$, 
$f_{!}:=\dR f_{*}(-\otimes \omega_{X})\otimes \omega_{Y}^{-1}$. 
Then we have 
\[
\Hom(T[i], \dL f^{*}F) \cong \Hom(f_{!}T[i], F). 
\]

By the description of $f_{!}$, we know that 
$f_{!}T$ is a two term complex concentrated in 
degree $0$ and $1$. 
Hence there exist $T', T'' \in \Coh(Y)$ and 
an exact triangle 
\[
T'[i] \to f_{!}T[i] \to T''[i-1]. 
\]

Applying $\Hom(-, F)$, we get an exact sequence 
\[
\Hom(T''[i-1], F) \to \Hom(f_{!}T[i], F) \to \Hom(T'[i], F). 
\]

Since $T', T''$ are torsion sheaves 
and $F$ is torsion free sheaf, 
we conclude that 
\begin{equation}
\label{vanish}
\Hom(T[i], \dL f^{*}F) \cong \Hom(f_{!}T[i], F)=0. 
\end{equation}

Hence $\dL f^{*}F \in \Per(X/Y)$. 

Next we claim that $\dL^{-1}f^{*}F \in \mathcal{C}^0$. 
Note that, if so, together with the equation (\ref{vanish}), 
we must have $\dL^{-1} f^{*}F=0$, i,e, 
$\dL f^{*}F=f^{*}F$. 
By the spectral sequence 
\[
\dR^{p}\dL^{q}f^{*}F \Rightarrow \mathcal{H}^{p+q}(\dR f_{*}\dL f^{*}F)=F, 
\]
we have 
\begin{itemize}
\item $f_{*}(\dL^{-1}f_{*}F)=0$, 
\item $0 \to \dR^1 f_{*}(\dL^{-1} f^{*}F) \to F \to f_{*}f^{*}F \to 0$ 
is exact in $\Coh(Y)$. 
\end{itemize}
Since $\dR^1 f_{*}(\dL^{-1} f^{*}F)$ is a torsion sheaf and 
we assume $F$ is torsion free, we have 
$\dR^1 f_{*}(\dL^{-1} f^{*}F)=0$. 
This proves that $\dL f^{*}F=f^{*}F$. 
\end{proof}

\begin{cor}
\label{xi is isom}
Assume that 
$v \in f^{*}H^{*}(Y ; \mathbb{Q}) \subset H^{*}(X ; \mathbb{Q})$. 
Then the morphism $\xi$ is an isomorphism. 
\end{cor}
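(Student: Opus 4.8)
The plan is to exhibit an explicit two-sided inverse to $\xi$, given by derived pullback. Lemma \ref{pull back in per} tells us that for a torsion free sheaf $F$ on $Y$ one has $\dL f^{*}F = f^{*}F \in \Per(X/Y)$, so $\dL f^{*}$ does not leave the perverse heart and produces honest coherent sheaves. I would therefore define a candidate inverse
\[
\eta \colon M^{H}(v') \to M^0(v), \quad F \mapsto \dL f^{*}F = f^{*}F,
\]
and check that it lands in $M^0(v)$ and is inverse to $\xi$ both set-theoretically and in families.

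First I would verify well-definedness. A $\mu_{H}$-stable sheaf $F$ is torsion free, so $f^{*}F \in \Per(X/Y)$ by Lemma \ref{pull back in per}; moreover $f_{*}(f^{*}F) \cong F$, by the projection formula together with $\dR f_{*}\mathcal{O}_{X} = \mathcal{O}_{Y}$ and the vanishing of higher direct images $\dR^{i}f_{*}f^{*}F = 0$ for $i>0$ (which holds since $f^{*}F \in \Per(X/Y)$ and the fibres of $f$ are at most one-dimensional). Hence $f_{*}(f^{*}F)$ is $\mu_{H}$-stable and $f^{*}F$ is $0$-stable. It remains to match Chern characters. Writing $v = f^{*}w$ (possible by hypothesis), the projection formula and the identity $f_{*}\td_{X} = \td_{Y}$ — a consequence of Grothendieck--Riemann--Roch and $\dR f_{*}\mathcal{O}_{X} = \mathcal{O}_{Y}$ — give $v' = f_{*}(f^{*}w.\td_{X}).\td_{Y}^{-1} = w$, whence $\ch(f^{*}F) = f^{*}\ch(F) = f^{*}v' = f^{*}w = v$. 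Thus $\eta$ is well-defined on closed points.

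Next I would check the two compositions. The equality $\xi \circ \eta = \id$ is immediate from $f_{*}(f^{*}F) \cong F$ above. For $\eta \circ \xi = \id$, let $E \in M^0(v)$ and consider the adjoint (counit) morphism $\phi \colon f^{*}f_{*}E \to E$; note that $f^{*}f_{*}E = \dL f^{*}f_{*}E$ since $f_{*}E$ is torsion free. Because $E \in \Per(X/Y)$, Lemma \ref{property of adjoint}(3) gives $\Coker\phi = 0$, so $\phi$ is surjective, and Lemma \ref{property of adjoint}(4) gives $\Ker\phi \in \mathcal{C}^0$. Comparing Chern characters, $\ch(\Ker\phi) = \ch(f^{*}f_{*}E) - \ch(E) = f^{*}v' - v = 0$, and a coherent sheaf with vanishing Chern character is zero; hence $\phi$ is an isomorphism and $\eta(\xi(E)) \cong E$ naturally.

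Finally, since both $\xi$ and $\eta$ are induced by the derived functors $\dR f_{*}$ and $\dL f^{*}$, which operate on families, and the unit and counit of the adjunction are natural, these set-theoretic inverse bijections upgrade to mutually inverse morphisms of schemes. The only genuinely delicate points are the bookkeeping that identifies $w$ with $v'$ and guarantees $\ch(f^{*}F)=v$, and the verification that the adjunction isomorphisms are compatible with the universal families so as to produce scheme morphisms rather than mere bijections; the stability statements themselves reduce cleanly to Lemma \ref{pull back in per} and Lemma \ref{property of adjoint}. I expect the family-level functoriality to be the main technical obstacle.
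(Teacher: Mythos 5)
Your proposal is correct and follows essentially the same route as the paper: define $\eta = \dL f^{*} = f^{*}$ via Lemma \ref{pull back in per}, get $\xi\circ\eta=\id$ from the projection formula, and get $\eta\circ\xi=\id$ by showing the kernel $K\in\mathcal{C}^{0}$ of the counit $f^{*}f_{*}E\to E$ vanishes for Chern-character reasons. The only (immaterial) difference is that you compute $\ch(K)=f^{*}v'-v=0$ directly, whereas the paper argues that a nonzero object of $\mathcal{C}^{0}$ cannot have Chern character in $f^{*}H^{*}(Y;\mathbb{Q})$; your extra worries about family-level functoriality are not addressed in the paper either.
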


\begin{proof}
By Lemma \ref{pull back in per}, we can define a morphism $\eta$ as 
\[
\eta \colon M^{H}(w) \to M^0(v), \ 
F \mapsto \dL f^{*}F=f^{*}F. 
\]
Then by the projection formula, we have 
$\xi \circ \eta=\id$. 
On the other hand, let $E \in M^0(v)$. 
Then we have an exact sequence 
\[
0 \to K \to f^{*}f_{*}E \to E \to 0
\]
in $\Coh(X)$, since $E \in \Per(X/Y)$. 
By Lemma \ref{property of adjoint} (4), 
we have $K \in \mathcal{C}^0$. 
In particular, 
$\ch(K) \not\in f^{*}H^{*}(Y ; \mathbb{Q})$. 
Since we assume $v \in f^{*}H^{*}(Y ; \mathbb{Q})$, 
we must have $K=0$, i.e, 
$\eta \circ \xi=\id$. 
\end{proof}


\subsection{Moduli space of Gieseker stable sheaves on $X$}

First we recall the finiteness result of walls for 
$\mu$-stability: 

\begin{prop}[cf. {\cite[Lemma 1.1.7]{sch00}}]
\label{finite wall for mu}
Let $H$ be an ample divisor on $Y$, 
take $\epsilon_{0} \in \mathbb{Q}_{>0}$ 
so that $f^{*}H-\epsilon_{0} D$ is ample on $X$. 
Let 
\[
\Delta:=
\left\{
H_{\epsilon}:=f^{*}H-\epsilon D : 
0 \leq \epsilon \leq \epsilon_{0}
\right\}. 
\]
Then there exist only finitely many walls  on $\Delta$ 
for $\mu$-stability with respect to $v$. 
\end{prop}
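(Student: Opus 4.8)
The statement is the blow-up analogue of \cite[Lemma 1.1.7]{sch00}, and the plan is to run the standard Bogomolov-inequality argument, being careful that the segment $\Delta$ reaches the boundary of the ample cone at $\epsilon=0$. First I would reduce everything to counting numerical classes. Set $r:=v_{0}$. If $\epsilon_{W}\in[0,\epsilon_{0}]$ is a wall, then there is a torsion-free sheaf $E$ with $\ch(E)=v$ that is strictly $\mu_{H_{\epsilon_{W}}}$-semistable, hence a saturated subsheaf $E'\subset E$ of rank $r'$ with $0<r'<r$ and $\mu_{H_{\epsilon_{W}}}(E')=\mu_{H_{\epsilon_{W}}}(E)$. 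Writing $\xi:=r\,c_{1}(E')-r'\,c_{1}(E)\in\NS(X)$, the equality of slopes is exactly $\xi\cdot H_{\epsilon_{W}}^{\,n-1}=0$, and the wall is the zero locus in $\Delta$ of $\epsilon\mapsto\xi\cdot H_{\epsilon}^{\,n-1}$. Since $H_{\epsilon}^{\,n-1}=(f^{*}H-\epsilon D)^{n-1}$ is a polynomial of degree $n-1$ in $\epsilon$, for fixed $\xi$ this function is either identically zero (no jump of stability occurs along $\Delta$) or has at most $n-1$ roots. Thus it suffices to show that only finitely many classes $\xi$ arise this way.

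To bound the $\xi$ I would combine the Bogomolov inequality with the Hodge index theorem. At the wall one may choose $E'$ so that both $E'$ and $E'':=E/E'$ are $\mu_{H_{\epsilon_{W}}}$-semistable of the same slope; writing $\operatorname{dis}$ for the discriminant ($\operatorname{dis}(F)=2\rk(F)c_{2}(F)-(\rk(F)-1)c_{1}(F)^{2}$), the Bogomolov inequality in characteristic zero gives $\operatorname{dis}(E')\cdot H_{\epsilon_{W}}^{\,n-2}\ge0$ and $\operatorname{dis}(E'')\cdot H_{\epsilon_{W}}^{\,n-2}\ge0$. The purely formal identity
\[
\frac{\operatorname{dis}(E')}{r'}+\frac{\operatorname{dis}(E'')}{r''}-\frac{\operatorname{dis}(E)}{r}=\frac{\xi^{2}}{r\,r'\,r''},\qquad r'':=r-r',
\]
intersected with $H_{\epsilon_{W}}^{\,n-2}$ then yields the lower bound $\xi^{2}\cdot H_{\epsilon_{W}}^{\,n-2}\ge -r'r''\,\operatorname{dis}(E)\cdot H_{\epsilon_{W}}^{\,n-2}$, while the Hodge index theorem applied to $\xi\in(H_{\epsilon_{W}})^{\perp}$ gives the matching upper bound $\xi^{2}\cdot H_{\epsilon_{W}}^{\,n-2}\le0$. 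As $\ch(E)=v$ is fixed, $r'r''\le r^{2}/4$ is bounded, and $\operatorname{dis}(E)\cdot H_{\epsilon}^{\,n-2}$ is continuous in $\epsilon$ on the compact interval $[0,\epsilon_{0}]$, so I obtain a uniform two-sided estimate $\xi^{2}\cdot H_{\epsilon_{W}}^{\,n-2}\in[-C,0]$ with $C$ depending only on $v$, $H$ and $\epsilon_{0}$.

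Finally I would convert this estimate into lattice finiteness. For a fixed ample $A$ the form $\eta\mapsto\eta^{2}\cdot A^{n-2}$ is negative definite on the hyperplane $A^{\perp}=\{\eta:\eta\cdot A^{n-1}=0\}$ by the Hodge index theorem, so $\{\eta\in\NS(X)\cap A^{\perp}:\eta^{2}\cdot A^{n-2}\ge -C\}$ is finite and contained in a bounded region of $\NS(X)_{\mathbb{R}}$. Running this over the compact family $A=H_{\epsilon}$, $\epsilon\in[0,\epsilon_{0}]$, and using that the forms vary continuously, the admissible $\xi$ lie in a single bounded region, hence form a finite subset of the lattice $\NS(X)$. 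Together with the first paragraph this proves the proposition.

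\textbf{Main obstacle.} The delicate step is making the Hodge index estimate uniform up to the endpoint $\epsilon=0$, where $H_{0}=f^{*}H$ is only nef: a priori the form $\eta\mapsto\eta^{2}\cdot(f^{*}H)^{n-2}$ could degenerate, the bounded regions could blow up, and walls could accumulate at $0$. Here the blow-up geometry rescues the argument. Using $\NS(X)_{\mathbb{R}}=f^{*}\NS(Y)_{\mathbb{R}}\oplus\mathbb{R}[D]$ together with $f_{*}D=0$, $f_{*}(D^{2})=-[C]$ and $H^{n-2}\cdot C>0$, the form $\eta\mapsto\eta^{2}\cdot(f^{*}H)^{n-2}$ is block-diagonal in this splitting, with the $f^{*}\NS(Y)$-block being the (nondegenerate, Lefschetz-type) form $\alpha\cdot\beta\cdot H^{n-2}$ on $Y$ and the $D$-block equal to $-\deg_{H}C<0$; hence it remains nondegenerate and negative definite on $(f^{*}H)^{\perp}$. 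This keeps the bound uniform at $\epsilon=0$, and it is exactly the point that has no counterpart in the surface case, where $A^{n-2}$ is the fundamental class and the relevant quadratic form is independent of the polarization.
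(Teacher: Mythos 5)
Your proposal is correct in outline and follows the same overall strategy as the paper: both reduce to Schmitt's argument and isolate the two facts that need care because the segment $\Delta$ meets the boundary of the ample cone at $\epsilon=0$, namely the Bogomolov--Gieseker inequality for $\mu_{H_\epsilon}$-semistable sheaves and the Hodge-index negativity of $\xi\mapsto\xi^2\cdot H_\epsilon^{n-2}$ on the hyperplane $H_\epsilon^\perp$. Where you genuinely diverge is in the treatment of the Hodge-index fact at $\epsilon=0$: the paper perturbs, setting $x_\epsilon:=x-\frac{x\cdot H_\epsilon^{n-1}}{D\cdot H_\epsilon^{n-1}}D$ for $\epsilon>0$ and letting $\epsilon\to 0^{+}$, which only yields the non-strict inequality $-x^2\cdot(f^*H)^{n-2}\geq 0$; you instead exploit the blow-up decomposition $\NS(X)_{\mathbb{R}}=f^*\NS(Y)_{\mathbb{R}}\oplus\mathbb{R}[D]$ together with $f_*D=0$ and $f_*(D^2)=-[C]$ to show the form is block-diagonal and genuinely negative \emph{definite} on $(f^*H)^\perp$. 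Your computation is correct, and it buys something real: strict definiteness at the endpoint is exactly what makes the uniform boundedness of the wall classes over the compact interval transparent, whereas the paper's limiting argument leaves that uniformity implicit. You also write out the discriminant identity and the lattice-finiteness step that the paper delegates to \cite{sch00}.

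One point you should not leave unaddressed: in your second paragraph you apply the Bogomolov inequality to $\mu_{H_{\epsilon_W}}$-semistable sheaves, and at a wall with $\epsilon_W=0$ the polarization $f^*H$ is only nef. This is precisely the first of the two issues the paper flags, and it is resolved by citing the fact that the Bogomolov--Gieseker inequality holds for nef polarizations (Langer, \cite{lan04}); note that one cannot simply pass to the limit from nearby ample classes, since a $\mu_{f^*H}$-semistable sheaf need not be $\mu_{H_\epsilon}$-semistable for small $\epsilon>0$. Adding that citation closes the only gap in your argument.
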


\begin{proof}
The argument is essentially same as 
\cite[Lemma 1.1.7]{sch00}. 
However, since $f^{*}H$ is not ample, 
we need to be a little bit careful. 
The proof needs the following two facts: 
\begin{enumerate}
\item The Bogomolov-Gieseker (BG) inequality 
for $\mu_{H_{\epsilon}}$-stable sheaves. 

\item Let $x \in \NS(X)_{\mathbb{R}}$. 
If $x.H_{\epsilon}^{n-1}=0$, 
then $-x^2.H_{\epsilon}^{n-2} \geq 0$. 
\end{enumerate}

Firstly, (1) holds since the BG inequality holds 
for all nef divisor (cf. \cite{lan04}). 
Next consider the statement (2). 
If $\epsilon \neq 0$, 
then $H_{\epsilon}$ is ample and 
the statement holds by 
the Hodge index theorem. 
Assume $\epsilon=0$, 
let 
$x_{\epsilon}:=
x-\frac{x.H_{\epsilon}^{n-1}}{D.H_{\epsilon}^{n-1}}D$. 
Then 
\[
x_{\epsilon}.H_{\epsilon}^{n-1}
=x.H_{\epsilon}^{n-1}
-\frac{x.H_{\epsilon}^{n-1}}{D.H_{\epsilon}^{n-1}}D.H_{\epsilon}^{n-1}
=0
\]
and 
$\lim_{\epsilon \to +0} x_{\epsilon}=x$. 
Since $x_{\epsilon}$ changes continuously 
with respect to $\epsilon$, 
we get the required result 
from the result of the ample case. 
\end{proof}

By Proposition \ref{finite wall for mu}, 
there exist $0 < \epsilon(v)$ such that 
for every $0 < \epsilon \leq \epsilon(v)$, 
$M^{f^{*}H-\epsilon D}(v)$ is constant. 

\begin{prop}
\label{moduli on X}
There exists an integer $m(v)>0$ such that 
for every $m \geq m(v)$, we have 
an open and closed embedding 
$
M^{f^*H-\epsilon(v)D}(v) \subset M^m(v). 
$
\end{prop}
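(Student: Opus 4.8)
The plan is to construct a morphism $\iota \colon M^{f^{*}H-\epsilon(v)D}(v) \to M^m(v)$ that sends a Gieseker $(f^{*}H-\epsilon(v)D)$-stable sheaf $E$ to itself, now regarded as an $m$-stable sheaf, and then to show that $\iota$ is an open and closed embedding via a properness argument. Write $\sigma := f^{*}H-\epsilon(v)D$, which is ample for $\epsilon(v)$ small, and recall that $\mathcal{O}_{X}(D)|_{L_{y}} \cong \mathcal{O}_{L_{y}}(-1)$ for every $y \in C$. I would first establish the pointwise inclusion: for $m \gg 0$, every $\sigma$-stable sheaf $E$ is $m$-stable. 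Such an $E$ is torsion free, so by Definition \ref{m-stability} it suffices to verify (i) $E(-mD) \in \Per(X/Y)$ and (ii) $f_{*}(E(-mD))$ is $\mu_{H}$-stable. For (i), Lemma \ref{criterion for per} together with $\mathcal{O}(mD)|_{L_{y}} \cong \mathcal{O}_{L_{y}}(-m)$ reduces the claim to $\Hom(E, \mathcal{O}_{L_{y}}(-1-m))=0$ for all $y \in C$. A nonzero such map yields a quotient of $E|_{L_{y}}$ isomorphic to $\mathcal{O}_{L_{y}}(e)$ with $e \le -1-m$; but $M^{\sigma}(v)$ is a bounded family and $\{L_{y}\}_{y \in C}$ is the family of fibres of the $\mathbb{P}^1$-bundle $D \to C$, so the slopes of the restrictions $E|_{L_{y}}$ are bounded below uniformly in $E$ and $y$. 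Hence there is an $m(v)$ beyond which no such quotient exists, giving (i) uniformly.

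For (ii), I would use the slope identity $\mu_{H}(f_{*}G) = \mu_{f^{*}H}(G)$, valid for $G \in \Per(X/Y) \cap \Coh(X)$ with $\dR^1 f_{*}G=0$ (a Grothendieck--Riemann--Roch computation using $f_{*}D=0$ and $\rk f_{*}G = \rk G$), applied to $G := E(-mD)$, where $\mu_{f^{*}H}(G):=c_{1}(G).f^{*}H^{n-1}/\rk G$. Coprimality is preserved, since $f^{*}H^{n-1}.c_{1}(E(-mD)) = f^{*}H^{n-1}.v_{1}$ because $f^{*}H^{n-1}.D=0$; thus by the coprimality assumption it is enough to prove $f_{*}(E(-mD))$ is $\mu_{H}$-semistable. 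A destabilizing saturated subsheaf $S \subset f_{*}(E(-mD))$ with $\mu_{H}(S) > \mu_{H}(f_{*}(E(-mD)))$ would, via the counit $f^{*}f_{*}(E(-mD)) \twoheadrightarrow E(-mD)$, produce $G_{S} := \Image(f^{*}S \to E(-mD))$ with $\mu_{f^{*}H}(G_{S}) \ge \mu_{H}(S) > \mu_{f^{*}H}(E(-mD))$. Since there are only finitely many $\mu$-walls on $\Delta$ by Proposition \ref{finite wall for mu} and $\epsilon(v)$ lies in the last chamber, this strict inequality persists after replacing $f^{*}H$ by $\sigma$, contradicting the $\mu_{\sigma}$-semistability implied by Gieseker $\sigma$-stability of $E$. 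Hence (ii) holds and $E$ is $m$-stable.

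It then remains to upgrade the set-theoretic inclusion to an open and closed embedding of schemes. The construction is functorial in flat families: a flat family of $\sigma$-stable sheaves is fibrewise $m$-stable for $m \ge m(v)$ by the above, and conditions (i) and (ii) hold in families, so the coarse moduli property yields the morphism $\iota$. This morphism identifies $M^{\sigma}(v)$ with the subscheme $U \subseteq M^m(v)$ parametrizing those $m$-stable sheaves that are Gieseker $\sigma$-stable, because both coarsely corepresent the moduli functor of $\sigma$-stable sheaves on that locus. The locus $U$ is open by openness of stability in families, and it is closed because $M^{\sigma}(v)$ is projective and $M^m(v)$ is separated, so the image of $\iota$ is closed. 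Therefore $\iota$ is an isomorphism onto a union of connected components, i.e. an open and closed embedding.

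I expect the main obstacle to be step (ii): making the comparison between $\mu_{\sigma}$-stability of $E$ on $X$ and $\mu_{H}$-stability of the pushforward $f_{*}(E(-mD))$ on $Y$ fully rigorous. In particular one must control the generic rank and slope of $G_{S}$ and of $f_{*}G_{S}$ (so that $\mu_{f^{*}H}(G_{S}) \ge \mu_{H}(S)$ really holds, taking care of whether $\dR^1 f_{*}G_{S}$ vanishes) and confirm that the strict slope inequality survives the perturbation by $-\epsilon(v)D$. By comparison, the uniform boundedness input in step (i) and the functoriality bookkeeping in the last step are routine.
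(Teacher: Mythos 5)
Your proposal is correct and follows essentially the same route as the paper: boundedness of $M^{f^{*}H-\epsilon(v)D}(v)$ to get $E(-mD)\in\Per(X/Y)$ uniformly in $m\geq m(v)$, the wall-finiteness of Proposition \ref{finite wall for mu} together with the degeneration $\epsilon\to 0$ (and the coprimality assumption reducing stability to semistability) to transfer $\mu_{f^{*}H-\epsilon(v)D}$-stability of $E$ to $\mu_{H}$-stability of $f_{*}(E(-mD))$, and openness of stability plus projectivity of both moduli spaces for the open-and-closed claim. The only cosmetic differences are that the paper checks perversity via surjectivity of the adjunction morphism $f^{*}f_{*}(E(-mD))\to E(-mD)$ rather than via Lemma \ref{criterion for per}, and runs the slope comparison directly (passing to the limit of the strict inequalities over $0<\epsilon\leq\epsilon(v)$ for an arbitrary subsheaf of $f_{*}(E(-mD))$) rather than by contradiction with a destabilizer.
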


\begin{proof}
It is enough to show the following: 
there exists $m(v)>0$ such that 
for every $m \geq m(v)$ and $E \in M^{f^{*}H-\epsilon(v)D}(v)$, 
we have $E \in M^m(v)$. 
Indeed, if so, we have an immersion 
$\Phi \colon M^{f^{*}H-\epsilon(v)D}(v) \hookrightarrow M^m(v)$. 
By the openness of $\mu$-stability, 
$\Phi$ is an open immersion. 
On the other hand, since both 
$M^{f^{*}H-\epsilon(v)D}(v)$ and 
$M^m(v)$ are projective, 
$\Phi$ is also a closed immersion. 

First of all, 
since $M^{f^*H-\epsilon(v)D}(v)$ is bounded, 
there exists a fixed sheaf $U \in \Coh(X)$ 
such that 
for every $E \in M^{f^{*}H-\epsilon(v)D}(v)$, 
we have a surjective map
$U \twoheadrightarrow E$ 
(cf. \cite[Lemma 1.7.6]{hl97}). 
On the other hand, since $-D$ is $f$-ample, 
there exists an integer $m(v) > 0$ 
such that for every 
$m \geq m(v)$, 
the adjoint map 
$f^*f_{*}(U(-mD)) \to U(-mD)$ 
is surjective. 
Then we have a commutative diagram 
\[
\xymatrix{
&f^*f_{*}(U(-mD)) \ar@{>>}[d] \ar@{>>}[r] &U(-mD) \ar@{>>}[d] \\
&f^*f_{*}(E(-mD)) \ar^{\alpha}[r] &E(-mD). 
}
\]
Hence the adjoint map $\alpha$ 
is also surjective, i.e. $E(-mD) \in \Per(X/Y)$. 

It remains to show that for $m \geq m(v)$, 
$f_{*}(E(-mD))$ is $\mu_{H}$-stable. 
Note that $f_{*}(E(-mD))$ is torsion free 
since so is $E$. 
Take a non-zero proper subsheaf 
$F \subset f_{*}(E(-mD))$, 
and let 
$\phi \colon f^*F \to E(-mD)$ 
be the corresponding map. 
Taking its cone 
$T:=\Cone(\phi)$, 
we have 
\begin{align*}
&0 \to \mathcal{H}^{-1}(T) \to f^*F \to Q \to 0, \\
&0 \to Q \to E(-mD) \to \mathcal{H}^0(T) \to 0. 
\end{align*}

Here, $\mathcal{H}^i(T)$ denotes the 
cohomology with respect to 
the heart $\Coh(X)$ for each integer 
$i \in \mathbb{Z}$. 
Note that  since $\phi$ is injective 
outside $D$, 
$\Supp(\mathcal{H}^{-1}(T)) \subset D$. 
Hence we can write 
$\ch(\mathcal{H}^{-1}(T))=(0, lD, \cdots)$ 
with $l \geq 0$. 
Now by the $\mu_{f^*H-\epsilon(v)D}$-stability 
of $E(-mD)$, we have 
\[
\mu_{f^*H-\epsilon D}(Q) < \mu_{f^*H-\epsilon D}(E(-mD))
\]
for all $0 <\epsilon \leq \epsilon(v)$. 
Taking the limit $\epsilon \to +0$, 
we have 
\[
\mu_{H}(F)=\mu_{f^*H}(f^*F)
=\mu_{f^*H}(Q) 
\leq \mu_{f^*H}(E(-mD))
=\mu_{H}(f_{*}(E(-mD))). 
\]

This shows that $E \in M^m(v)$. 
\end{proof}

\begin{rmk}
As mentioned in the introduction, when $n=\dim Y=2$, 
the embedding given in Proposition \ref{moduli on X} 
is actually an isomorphism (see Theorem \ref{ny}). 
When $n \geq 3$, we do not know whether 
the inclusion is isomorphism or not in general. 
However, see the following example. 
\end{rmk}

\begin{ex}
\label{ex in hilb}
Let $n=3$, $v=(1, 0, 0, -k) \in H^{2*}(X; \mathbb{Q})$ 
with $k \in \mathbb{Z}_{>0}$. 
We claim that for a sufficiently large integer 
$m \in \mathbb{Z}$, we have an isomorphism between 
$M^m(v)$ and $M^{f^*H-\epsilon D}(v)$. 
By Proposition \ref{moduli on X}, it is enough to show that 
every $m$-stable sheaf $E \in M^m(v)$ is torsion free. 
Let us consider the exact sequence 
\[
0 \to E_{tor} \to E \to E_{fr} \to 0, 
\]
where $E_{tor}$ (resp. $E_{fr}$) is 
the torsion (resp. torsion free) part of E. 

First assume that $\dim \Supp (E_{tor})=2$. 
Take a general member $A$ 
in the linear system $| H |$ and 
let $f_{A} \colon A':=f^{-1}(A) \to A$. 
We may assume that 
\begin{enumerate}
\item $(f_{*}E(-mD))|_{A} \cong f_{A*}(E(-mD)|_{A'})$ 
is torsion free. 

\item $E_{tor}|_{A'} \neq 0$. 
\end{enumerate}
This means $E|_{A'}$ is an $m$-stable sheaf on 
the smooth projective surface $A'$ 
which has a non-trivial torsion part. 
Hence by Theorem \ref{ny} (3), 
$m$ is bounded from above. 

We may now assume that $\dim \Supp(E_{tor})=1$. 
Since $f_{*}(E(-mD))$ is torsion free, 
we can see that 
\[
E_{tor} \in 
\left\langle 
\mathcal{O}_{L_{y}}(a_{y}) : 
y \in C, \quad 
a_{y}+m <0
\right\rangle. 
\]
In particular, we have 
$\ch(E_{fr})
=\left(1, 0, -\sum_{i=1}^{l}n_{i}L_{y_{i}}, 
-k-\sum_{i=1}^{l}n_{i}(a_{i}+\frac{1}{2})
\right)$, 
where $y_{i} \in C$, $a_{i}+m <0$. 
Now consider the exact sequence 
\[
0 \to E_{fr} \to (E_{fr})^{DD}\cong \mathcal{O}_{X} \to \mathcal{O}_{Z} \to 0. 
\]
Here we use the speciality of our choice of $v$, 
which implies that 
$(E_{fr})^{DD} \cong \mathcal{O}_{X}$ 
for any $E \in M^m(v)$. 
Since $\dR f_{*}\mathcal{O}_{Z}=f_{*}\mathcal{O}_{Z}$ 
is zero dimensional, 
the Riemann-Roch theorem yeilds 
\[
0 \leq \chi(\dR f_{*}\mathcal{O}_{Z}) 
=k+\sum_{i=1}^{l}n_{i}(a_{i}+\frac{1}{2}). 
\]
Since $a_{i}+m <0$, we get the inequality 
$k \geq m(\sum_{i=1}^{l}n_{i})$ 
which bounds $m$ from above. 
\end{ex}


\section{Scheme structure on $M^{0, 1}(v)$}
\label{scheme str}

In this section, we define the scheme structure on $M^{0, 1}(v)$ 
connecting $M^0(v)$ and $M^1(v)$. 
Recall that we have constructed a set-theoretic diagram 
in Proposition \ref{set}: 
\begin{equation}
\label{set theoretic}
\xymatrix{
&M^0(v) \ar[rd]_{\xi_{0}^{-}} & &M^1(v) \ar[ld]^{\xi_{0}^{+}} \\
& &M^{0, 1}(v). & 
}
\end{equation}

On the other hand, we have a scheme-theoretic diagram 
\begin{equation}
\label{scheme theoretic}
\xymatrix{
&M^0(v) \ar[rd]_{\xi=f_{*}} & &M^1(v) \ar[ld]^{\xi^{+}=f_{*}} \\
& &M^{H}(w). & 
}
\end{equation}

In the following, we will show that these two diagrams are 
essentially same.

\begin{prop}
\label{xi vs xi0}
\begin{enumerate}
\item The morphism 
\begin{equation}
\label{xi on intersection}
\xi|_{M^0(v) \cap M^1(v)} \colon 
M^0(v) \cap M^1(v) \to M^{H}(w) 
\end{equation} 
is an immersion. 

\item We can identify 
$\xi_{0}^{-}(M^0(v))$ with 
$\xi(M^0(v))$. 

\item Under the identification (2), 
we have $\xi=\xi^{-}_{0}$. 

\end{enumerate}
\end{prop}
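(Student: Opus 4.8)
The plan is to base all three assertions on a single \emph{reconstruction} statement: a sheaf $F$ that is simultaneously $0$-stable and $1$-stable is recovered from $G:=f_{*}F$ in a canonical way, as the $\mathcal{F}_{D}$-part of $f^{*}G$. Let me record this first. Since $F$ is $0$-stable we have $F\in\Per(X/Y)$ and $G=f_{*}F$ is $\mu_{H}$-stable, hence torsion free; by Lemma \ref{pull back in per} this gives $f^{*}G=\dL f^{*}G=f^{*}f_{*}F$, and by Lemma \ref{property of adjoint} (3), (4) the adjoint $\phi\colon f^{*}G\to F$ is surjective with $K:=\Ker\phi\in\mathcal{C}^0$. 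On the other hand $1$-stability forces $f_{*}(F(-D))$ to be torsion free, so $F\in\mathcal{F}_{D}$ by Lemma \ref{criterion for tor free} (2). Writing the decomposition of $f^{*}G$ for the torsion pair $(\mathcal{T}_{D},\mathcal{F}_{D})$ as
\[
0 \to T_{0} \to f^{*}G \to \overline{f^{*}G} \to 0,
\]
a short computation using $\mathcal{C}^0\subset\mathcal{T}_{D}$ (Lemma \ref{criterion for tor free} (1)) and $\mathcal{T}_{D}\cap\mathcal{F}_{D}=0$ shows $K=T_{0}$: indeed $\phi$ kills $T_{0}$ because $\Hom(T_{0},F)=0$, so $T_{0}\subseteq K$, while the image of $K\hookrightarrow f^{*}G\twoheadrightarrow\overline{f^{*}G}$ lies in $\mathcal{T}_{D}\cap\mathcal{F}_{D}=0$, so $K\subseteq T_{0}$. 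Hence $F\cong\overline{f^{*}G}$.

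For part (1) I would use this to exhibit $\xi|_{M^0(v)\cap M^1(v)}$ as an immersion. Set-theoretic injectivity is immediate, since $G$ determines $F=\overline{f^{*}G}$. To upgrade to a locally closed immersion of schemes, the essential point is to perform $G\mapsto\overline{f^{*}G}$ in families: starting from a (quasi-)universal family on $M^{H}(v')$, pull back along $\id\times f$, split off the $\mathcal{F}_{D}$-part relatively using a family version of the torsion pair $(\mathcal{T}_{D},\mathcal{F}_{D})$, and restrict to the locally closed subscheme $U\subseteq M^{H}(v')$ where the result is a flat family of $0$- and $1$-stable sheaves of Chern character $v$. This produces a morphism $\sigma\colon U\to M^0(v)\cap M^1(v)$ with $\sigma\circ(\xi|_{M^0\cap M^1})=\id$. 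Viewed as a map to $U$, $\xi|_{M^0\cap M^1}$ is then a section of the separated morphism $\sigma$, hence a closed immersion into $U$; as $U$ is locally closed in $M^{H}(v')$, this gives assertion (1).

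Parts (2) and (3) are then largely formal. Recall from Proposition \ref{set} that $M^{0,1}(v)=\coprod_{\beta}(M^0(v-\beta)\cap M^1(v-\beta))$ and that $\xi^{-}_{0}$ sends $E^{-}\in M^0(v)$ to its $\mathcal{F}_{D}$-part $F=\overline{E^{-}}$. Since the torsion part $T\in\mathcal{C}^0$ satisfies $f_{*}T=0$ (Proposition \ref{0-st to 1-st} (1)) we get $f_{*}F=f_{*}E^{-}$, and $\dR f_{*}T=0$ kills $\ch(T)$, so every such $F$ has $\ch(f_{*}F)=v'$. I would then check that $\iota\colon F\mapsto f_{*}F$ identifies $\xi^{-}_{0}(M^0(v))$ with $\xi(M^0(v))$: it is injective even across different $\beta$, because $f_{*}F=f_{*}F'$ forces $F\cong\overline{f^{*}f_{*}F}\cong F'$ by the reconstruction (whence equal Chern characters and equal $\beta$), and it is surjective onto $\xi(M^0(v))$ by construction; applying the immersion of (1) to each Chern character $v-\beta$ promotes this bijection to an identification of schemes, giving (2). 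Finally (3) is the commutativity $\iota\circ\xi^{-}_{0}=\xi$ under this identification, which holds because $\iota(\xi^{-}_{0}(E^{-}))=f_{*}\overline{E^{-}}=f_{*}E^{-}=\xi(E^{-})$; since $\iota$ is by definition the pushforward, this says exactly that the set map $\xi^{-}_{0}$ coincides with the scheme morphism $\xi$ after the identification.

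The torsion-pair bookkeeping ($K=T_{0}$), the vanishing $f_{*}T=0$, and the Chern-character computation are routine. The genuine obstacle lies in part (1): turning $G\mapsto\overline{f^{*}G}$ into an honest morphism $\sigma$ of schemes requires a relative version of the torsion pair $(\mathcal{T}_{D},\mathcal{F}_{D})$ together with the relevant flatness and openness of the stability conditions, and it is there — not in the pointwise statement — that care is needed to conclude that $\xi|_{M^0(v)\cap M^1(v)}$ is an immersion.
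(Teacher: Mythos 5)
Your reconstruction statement ($F\cong\overline{f^{*}f_{*}F}$, with $K=\Ker(f^{*}f_{*}F\to F)$ equal to the $\mathcal{T}_{D}$-part $T_{0}$) is correct and is exactly the mechanism the paper uses for parts (2) and (3): the paper defines a map $\Phi$ from $\xi(M^0(v))$ to $\xi_{0}^{-}(M^0(v))$ by $G\mapsto F$ via the canonical quotient of $f^{*}G$ by an element of $\mathcal{C}^0$, and observes that $F$ depends only on $f^{*}G$; your $\iota$ is its inverse, so (2) and (3) match the paper's argument in substance.

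For part (1), however, you take a genuinely different route, and it is precisely at the step you yourself flag that your proposal has a gap. You want to build an inverse morphism $\sigma\colon U\to M^0(v)\cap M^1(v)$ by performing the torsion-pair decomposition $f^{*}G\mapsto\overline{f^{*}G}$ in families over a quasi-universal sheaf on $M^{H}(v')$; but torsion-pair decompositions do not in general vary flatly in families (the $\mathcal{T}_{D}$-part can jump), so producing $U$ and $\sigma$ requires a flattening-stratification or relative-Quot argument that you do not supply, and nothing in the paper provides it either. The paper avoids this entirely: since the target is projective, it suffices to check that $\xi|_{M^0(v)\cap M^1(v)}$ is injective on closed points and on tangent spaces. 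Injectivity on points is proved by lifting an isomorphism $f_{*}E'\cong f_{*}E$ through the adjunction sequences, using $\Hom(K',E)=0$ (as $K'\in\mathcal{C}^0$ and $E$ is $1$-stable) to produce $\psi\colon E'\to E$ with kernel in $\mathcal{C}^0$, which must vanish by $1$-stability of $E'$. Injectivity on tangent spaces comes from applying $\Hom(-,E)$ to $0\to K\to f^{*}f_{*}E\to E\to 0$, giving
\[
0=\Hom(K,E)\to\Ext^1(E,E)\hookrightarrow\Ext^1(f^{*}f_{*}E,E)\cong\Ext^1(f_{*}E,f_{*}E).
\]
This pointwise criterion buys exactly what your family construction would, at no extra cost; if you want to keep your section-of-$\sigma$ strategy, you must actually construct the relative $(\mathcal{T}_{D},\mathcal{F}_{D})$-decomposition and justify its flatness, which is the genuinely hard (and currently missing) content of your part (1).
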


\begin{proof}
(1) Since $M^{H}(w)$ is a projective scheme, 
it is enough to show that 
the morphism is injective and it induces 
injection between tangent spaces. 

First we will show that 
the morphism (\ref{xi on intersection}) is injective. 
Let $E, E' \in M^0(v) \cap M^1(v)$ 
with $\phi \colon f_{*}E' \cong f_{*}E$. 
Then we have the commutative diagram
\begin{equation}
\label{ex seq}
\xymatrix{
&0 \ar[r] & K' \ar[r] \ar@{.>}[d]& f^{*}f_{*}E' \ar[r] \ar[d]^{f^{*}(\phi)}& E' \ar[r] \ar@{.>}[d]^{\psi} &0 \\
&0 \ar[r] & K \ar[r] & f^{*}f_{*}E \ar[r] & E \ar[r] &0. 
}
\end{equation}
Note that since $K' \in \mathcal{C}^0$ and 
$E$ is $1$-stable, we have 
$\Hom(K', E)=0$. 
Hence the composition 
$K' \to f^{*}f_{*}E' \to f^{*}f_{*}E$ 
factors through 
$K' \to K \to f^{*}f_{*}E$. 
As a result, we get an exact sequence 
\[
\xymatrix{
&0 \ar[r] &T \ar[r] &E' \ar[r]^{\psi} &E \ar[r] &0. 
}
\]

Applying $\dR f_{*}$ to the diagram (\ref{ex seq}), 
we have $\dR f_{*}(\psi)=\phi \colon f_{*}E' \cong f_{*}E$ 
and hence $T \in \mathcal{C}^0$. 
Since $E'$ is $1$-stable, $T$ must be $0$, i.e. 
$\psi \colon E' \cong E$. 

It remains to show that for every 
$E \in M^0(v) \cap M^1(v)$, 
there exists an inclusion between tangent spaces 
\[
\xymatrix{
&T_{[E]}M^0(v)=\Ext^1(E, E) 
\ar[rr]^{f_{*}} 
& &\Ext^1(f_{*}E, f_{*}E)=T_{[f_{*}E]}M^{H}(w). 
}
\]
Note that since $E \in M^0(v) \cap M^1(v)$, 
we have $\dR f_{*}E=f_{*}E$ and 
$\dL f^* \dR f_{*}E \cong f^*f_{*}E$ 
(see Lemma \ref{pull back in per}). 
Applying $\Hom(-, E)$ to the second row 
of the diagram (\ref{ex seq}), we get 
\[
0=\Hom(K, E) \to \Ext^1(E, E) \hookrightarrow 
\Ext^1(f^{*}f_{*}E, E) \cong \Ext^1(f_{*}E, f_{*}E). 
\]
Note that $\Hom(K, E)=0$ follows from 
the $1$-stability of $E$. 
By \cite[Lemma 1.21]{huy06}, 
the diagram 
\[
\xymatrix{
&\Ext^1(E, E) \ar[d] \ar[rd]^{f_{*}} & \\ 
&\Ext^1(f^*f_{*}E, E) \ar[r] & \Ext^1(f_{*}E, f_{*}E)
}
\]
is commutative. 
Hence we conclude that 
the tangent map $f_{*}$ is injective. 

(2) For $G=\xi(E) \in \xi(M^0(v))$, we 
associate an element 
$\Phi(G)$ of 
$\xi^{-}_{0}(M^0(v))$ as follows: 
we have a commutative diagram 
\[
\xymatrix{
&0 \ar[r] &T' \ar[r] \ar@{.>}[d] & f^{*}f_{*}E \ar[r] \ar@{=}[d] & E \ar[r] \ar@{.>>}[d] &0 \\ 
&0 \ar[r] &T \ar[r] &f^{*}f_{*}E \ar[r] &F \ar[r] &0 
}
\]
in $\Coh(X)$, 
where $T'$ is the kernel of the adjoint map 
and the bottom sequence is the decomposition 
with respect to the torsion pair 
$(\mathcal{T}_{D}, \mathcal{F}_{D})$. 
Since $f^*f_{*}E$ is $0$-stable, 
we have 
$T \in \mathcal{C}^0$ and 
$F$ is $1$-stable 
by Proposition \ref{0-st to 1-st}. 
Note that since $\Hom(\mathcal{C}^0, F)=0$, 
we have a surjective map $E \to F \to 0$. 
Furthermore, by the construction, its kernel 
$\Ker(E \to F) \in \mathcal{C}^0$. 
In other words, 
\begin{equation}
\label{def of Phi}
\Phi(G):=F=\xi_{0}^{-}(E) \in \xi_{0}^{-}(M^0(v)). 
\end{equation}
Note that the definition of $\Phi$ is independent of 
the choice of $E \in M^0(v)$ with $f_{*}E=G$, i.e. 
the map $\Phi$ is well-defined. 
Indeed, $F$ only depends on $f^{*}f_{*}E=f^{*}G$. 

\begin{claim}
The map $\Phi$ is injective. 
\end{claim}

\begin{proof}
Let $G, G' \in \xi(M^0(v))$ with 
$\Phi(G)=\Phi(G')$. 
Then we have 
$G=f_{*}\Phi(G)=f_{*}\Phi(G')=G'$. 
\end{proof}

\begin{claim}
The map $\Phi$ is surjective. 
\end{claim}

\begin{proof}
Let $F=\xi^{-}_{0}(E) \in \xi_{0}^{-}(M^0(v))$. 
Then the equation (\ref{def of Phi}) shows that 
$F=\Phi(\xi(E))$. 
\end{proof}

(3) The assertion follows from the equation (\ref{def of Phi}). 

\end{proof}

\begin{prop}
\label{xi vs xi+}
\begin{enumerate}
\item We can identify $\xi^{+}_{0}(M^1(v))$ 
with $\xi^{+}(M^1(v))$. 

\item Via the identification (1), 
we have 
$\xi^{+}=\xi^{+}_{0}$. 
\end{enumerate}
\end{prop}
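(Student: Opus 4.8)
The plan is to reduce both assertions to Proposition \ref{xi vs xi0} by showing that the set-theoretic map $\xi^{+}_{0}$ factors as $\xi^{+}$ followed by the bijection $\Phi$ produced in the proof of Proposition \ref{xi vs xi0}(2). Recall from that proof that, for a sheaf $G$ in the image of $\xi$, the element $\Phi(G)$ is the $1$-stable quotient of $f^{*}G$ whose kernel lies in $\mathcal{C}^0$, that it depends only on $f^{*}G$, and that $\Phi$ is a bijection with inverse $f_{*}$. Since $\Phi$ is exactly the passage ``take the $\mathcal{F}_{D}$-quotient of $f^{*}G$'', identifying $\xi^{+}_{0}$ with $\Phi\circ\xi^{+}$ will make (1) and (2) formal.

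First I would fix $E^{+}\in M^1(v)$ and take its canonical decomposition $0\to F\to E^{+}\to T\to 0$ with $F\in\mathcal{T}$, $T\in\mathcal{F}$ from Proposition \ref{1-st to 0-st}, so that $T\in\mathcal{C}^0$ and $F=\xi^{+}_{0}(E^{+})$ is both $0$-stable and $1$-stable. Applying $f_{*}$ and using $f_{*}T=0$ gives $f_{*}F\cong f_{*}E^{+}=\xi^{+}(E^{+})=:G$. The heart of the argument is then to recognise $F$ as the $\mathcal{F}_{D}$-quotient of $f^{*}G$. Indeed, since $F$ is $0$-stable it lies in $\Per(X/Y)$, so by Lemma \ref{property of adjoint}(3) the adjunction $f^{*}G=f^{*}f_{*}F\to F$ is surjective, and its kernel $K$ lies in $\mathcal{C}^0$ by Lemma \ref{property of adjoint}(4), hence in $\mathcal{T}_{D}$ by Lemma \ref{criterion for tor free}(1); on the other hand $F$ is torsion free outside $D$ and its $1$-stability makes $f_{*}(F(-D))$ torsion free, so $F\in\mathcal{F}_{D}$ by Lemma \ref{criterion for tor free}(2). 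Thus $0\to K\to f^{*}G\to F\to 0$ is the torsion-pair decomposition of $f^{*}G$ for $(\mathcal{T}_{D},\mathcal{F}_{D})$, and by uniqueness $F$ equals the $\mathcal{F}_{D}$-quotient of $f^{*}G$, i.e. $F=\Phi(G)$. This yields the identity $\xi^{+}_{0}=\Phi\circ\xi^{+}$.

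With this identity in hand the two assertions follow formally. For (1), the relation $\xi^{+}_{0}=\Phi\circ\xi^{+}$ shows $\xi^{+}_{0}(M^1(v))=\Phi(\xi^{+}(M^1(v)))$, and since $\Phi$ is a bijection by Proposition \ref{xi vs xi0}(2), it restricts to a bijection between $\xi^{+}(M^1(v))$ and $\xi^{+}_{0}(M^1(v))$; this is the desired identification. Assertion (2) is then a restatement of the same identity: under the identification of (1), i.e. after applying $\Phi$, the two maps $\xi^{+}$ and $\xi^{+}_{0}$ out of $M^1(v)$ coincide (equivalently, $f_{*}\circ\xi^{+}_{0}=\xi^{+}$).

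I expect the main obstacle to be purely bookkeeping over the grading $M^{0,1}(v)=\coprod_{\beta\in\mathcal{S}}\bigl(M^0(v-\beta)\cap M^1(v-\beta)\bigr)$: one must check that $\Phi$, together with the immersion property of $\xi|_{M^0(v-\beta)\cap M^1(v-\beta)}$ from Proposition \ref{xi vs xi0}(1), is defined and bijective across all the relevant components rather than for a single Chern character. Concretely, for $G=\xi^{+}(E^{+})$ with $\ch F=v-\beta$ one should verify (using $\dR f_{*}T=0$ for $T\in\mathcal{C}^0$) that $G$ has Chern character $v'$ and lies in the locus $\xi(M^0(v-\beta))\subset M^{H}(v')$ on which $\Phi$ is defined, and that the scheme-theoretic image statements remain compatible once the set-theoretic bijection is in place.
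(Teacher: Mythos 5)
Your argument is correct and is essentially the intended one: the paper's proof of this proposition is literally ``Similar to that of Proposition \ref{xi vs xi0},'' and your identity $\xi^{+}_{0}=\Phi\circ\xi^{+}$, obtained by recognizing the $0$- and $1$-stable subsheaf $F\subset E^{+}$ as the $\mathcal{F}_{D}$-quotient of $f^{*}f_{*}E^{+}$ via the torsion pair $(\mathcal{T}_{D},\mathcal{F}_{D})$, is exactly the natural unpacking of that remark. The supporting steps (surjectivity of the adjunction and $K\in\mathcal{C}^0$ from Lemma \ref{property of adjoint}, $F\in\mathcal{F}_{D}$ from Lemma \ref{criterion for tor free}, and the bookkeeping over $\beta\in\mathcal{S}$) all check out.
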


\begin{proof}
The proof is similar to that of Proposition \ref{xi vs xi0}. 
Hence we just give an outline of the proof. 
We define the map 
$\Psi \colon \xi^+(M^1(v)) \to \xi^+_{0}(M^1(v))$ 
as follows: 
Let $F=\xi^{+}(E) \in \xi^{+}(M^1(v))$. 
Then we have $\phi \colon f_{*}E \cong F$ 
by definition. 
Take an element $\alpha \in \Hom(E, f^{*}F(D))$ 
corresponding to $\phi$ via the isomorphism 
$\Hom(f_{*}E, F) \cong \Hom(E, f^{*}F(D))$. 
Let $C:=\Cone(\alpha)$. 
Then since $f_{*}(\alpha)=\phi$, 
we have $C \in \mathcal{C}$. 
Furthermore, since $E$ is $1$-stable, 
we must have $\mathcal{H}^{-1}(C)=0$, 
i.e. $C$ is a sheaf. 
Hence we have the exact sequence 
\begin{equation}
\label{sub}
\xymatrix{
&0 \ar[r] &E \ar[r]^{\alpha} &f^{*}F(D) \ar[r] &C \ar[r] &0
}
\end{equation}
in $\Coh(X)$. 
Consider the following commutative diagram 
\[
\xymatrix{
& &0 \ar[d] & 0 \ar[d] & & \\
&0 \ar[r] &G' \ar[r] \ar[d] & E \ar[r] \ar^{\alpha}[d] & T' \ar[r] \ar[d] &0 \\ 
&0 \ar[r] &G \ar[r] &f^{*}F(D) \ar[r] &T \ar[r] &0 
}
\]
with $G', G \in \mathcal{T}$ 
and $T', T \in \mathcal{F}$. 
Using the fact that 
$C=\Coker(\alpha) \in \mathcal{C}^0$, 
we can show that $G' \cong G$. 
Now we define as 
$\Psi(F):=G=G'=\xi^+_{0}(E) \in \xi^+_{0}(M^1(v))$. 
As in Proposition \ref{xi vs xi0}, 
we can prove that $\Psi$ is bijection. 
Furthermore, the second assertion follows from 
the definition of the map $\Psi$. 
\end{proof}

As a summary of 
Proposition \ref{set}, 
Proposition \ref{xi vs xi0}, and 
Proposition \ref{xi vs xi+}, 
we get: 

\begin{prop}
\label{scheme}
There exists a diagram of projective schemes 
\[
\xymatrix{
&\cdots & M^m(v) \ar[ld] \ar[rd]^{\xi_{m}^{-}} & 
&M^{m+1}(v) \ar[ld]_{\xi_{m}^+} \ar[rd] & &\cdots \\
& & &M^{m, m+1}(v) & & & 
}
\]
connecting the moduli spaces of $m$-stable sheaves, 
where the morphisms $\xi_{m}^{\pm}$ are defined as 
$\xi_{m}^{\pm}:=f_{*}(- \otimes \mathcal{O}(-mD))$. 
Moreover, we can also describe 
the morphisms $\xi^{+}_{m}, \xi^{-}_{m}$ in terms of 
the decompositions with respect to the torsion pairs 
$\left(\mathcal{T} \otimes \mathcal{O}(mD), 
\mathcal{F} \otimes \mathcal{O}(mD)
\right)$, 
$\left(\mathcal{T}_{D} \otimes \mathcal{O}(mD), 
\mathcal{F}_{D} \otimes \mathcal{O}(mD)
\right)$, 
respectively. 
\end{prop}

We end with the following proposition 
describing the fibers of  $\xi$ and $\xi^{+}$. 
We can think them as the scheme structures on 
$\PSub(F, \beta)$ and 
$\widetilde{\PQuot}(F, \beta)$ 
defined in Definition \ref{pquot}. 

\begin{prop}
\label{fiber}
Let $F \in M^{H}(w)$ 
be a $H$-stable sheaf 
with Chern character $\ch(F)=v'$. 
Then the following statements hold. 
\begin{enumerate}
\item We have 
$
\xi^{-1}(F)=\Quot(f^{*}F, v). 
$

\item We have 
$
(\xi^{+})^{-1}(F)=\Sub(f^{*}F(D), v). 
$
\end{enumerate}
\end{prop}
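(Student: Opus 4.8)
The plan is to identify each fiber with the set of relevant perverse sub/quotient objects and then recognize that set as a Quot scheme via the Van den Bergh equivalence $\Phi$ of Theorem \ref{vdb}. Concretely, for statement (1), recall that $\xi \colon M^0(v) \to M^H(v')$ sends $E \mapsto f_*E$, so the fiber $\xi^{-1}(F)$ consists of $0$-stable sheaves $E$ with $f_*E \cong F$. Using the canonical exact sequence $0 \to K \to f^*f_*E \to E \to 0$ in $\Coh(X)$ (with $K \in \mathcal{C}^0$ by Lemma \ref{property of adjoint} (4)) together with $f_*E \cong F$, every such $E$ arises as a quotient $f^*F \twoheadrightarrow E$ whose kernel lies in $\mathcal{C}^0 \subset \Coh(X)$. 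Conversely, I would show that any quotient $f^*F \twoheadrightarrow E$ with $\ch(E) = v$ and $E$ an object of $\Per(X/Y)$ produces an element of $\xi^{-1}(F)$: applying $f_*$ recovers $F$ (since the kernel, forced into $\mathcal{C}^0$, has $f_* = 0$), and $0$-stability is inherited from the $\mu_H$-stability of $F = f_*E$. This gives a bijection $\xi^{-1}(F) \cong \Quot(f^*F, v)$ at the level of sets.

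The main point is to upgrade this bijection to a scheme-theoretic isomorphism, so I would pass through the equivalence $\Phi$. Since $\Phi$ restricts to an equivalence $\Per(X/Y) \cong \Coh(\mathcal{A})$ of abelian categories, it identifies the Quot functor of perverse quotients of $f^*F$ with Chern character $v$ with the honest Quot functor $\Quot(\Phi(f^*F), \Phi(v))$ in $\Coh(\mathcal{A})$; the latter is a genuine projective Quot scheme by Grothendieck's construction over the $\mathcal{A}$-module setting. I would then argue that the quotients appearing — with kernel in $\mathcal{C}^0$ — are exactly the quotients in $\Coh(X)$ that remain quotients in the abelian category $\Per(X/Y)$, so no distinction between the naive $\Coh(X)$-Quot scheme and the $\Per(X/Y)$-Quot scheme arises for the relevant Chern character. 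Matching this functorially with the fiber of $\xi$ (which is cut out as the locus over $[F]$) gives the identification $\xi^{-1}(F) = \Quot(f^*F, v)$ as schemes.

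For statement (2), I would run the dual argument in the tilted heart $\widetilde{\Per}(X/Y)$. Here $\xi^+ \colon M^1(v) \to M^H(v')$ sends $E^+ \mapsto f_*E^+$, and via the twist $(-)\otimes\mathcal{O}(-D)$ the $1$-stable sheaves correspond to $0$-stable data for $E^+(-D)$; the relevant sub-objects $0 \to T \to F[1]$ in $\widetilde{\Per}(X/Y)$ from Definition \ref{pquot} translate, after applying $\Phi$ (or its $\widetilde{\Per}$-analogue) and twisting, into sub-$\mathcal{A}$-modules of $\Phi(f^*F(D))$. The relation $\Hom(f^*M \otimes \mathcal{O}(D), F) \cong \Hom(M, f_*(F(-D)))$ used in the Claim of Proposition \ref{0-st to 1-st} is exactly the adjunction that turns "sub-objects by $\mathcal{C}^0$" into the twisted picture, which is why the answer is $\Sub(f^*F(D), v)$ rather than $\Sub(f^*F, v)$. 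The Sub scheme is then the projective scheme representing the sub-object functor, again via $\Phi$.

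The hard part will be the scheme-theoretic matching rather than the set-theoretic bijection: I must check that the universal family defining $\Quot(f^*F, v)$ (equivalently $\Sub(f^*F(D), v)$) pulls back to the universal family over the fiber of $\xi$ (resp. $\xi^+$), i.e. that the functors of points agree, not merely their $\mathbb{C}$-points. The subtlety is that the Quot scheme a priori parametrizes quotients in $\Coh(X)$, whereas the moduli problem lives in $\Per(X/Y)$; I expect to need the flatness of the relevant families together with the fact that for the fixed Chern character $v$ the perverse condition ($\Coker \phi = 0$ in Lemma \ref{property of adjoint} (3), equivalently the $\mathcal{C}^0$-kernel condition) is open, so that the $\Per$-quotients form an open subscheme of the ordinary Quot scheme that coincides with all of it on the stable locus. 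Verifying this compatibility family-wise, and the analogous base-change statement through $\Phi$ for the tilted heart in case (2), is where the genuine work lies.
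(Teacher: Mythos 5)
Your treatment of part (1) is essentially the paper's argument: the adjunction sequence $0 \to K \to f^*f_*E \to E \to 0$ with $K \in \mathcal{C}^0$ gives one direction, and for the converse one checks $f_*K=0$ (as $K$ is torsion and $F$ torsion free) so that $F \hookrightarrow f_*E$ is an isomorphism by equality of Chern characters; the paper then simply observes $\Hom(f^*F,E)\cong\Hom(F,f_*E)=\mathbb{C}$ to make the correspondence between quotient objects and sheaves bijective, without any detour through the Van den Bergh equivalence $\Phi$ or a functor-of-points comparison. Your only real omission here is the remark that \emph{every} coherent quotient of $f^*F$ lies in $\Per(X/Y)$ (quotients in $\Coh(X)$ of a perverse coherent sheaf are perverse, by Lemma \ref{criterion for per}), so that no restriction to ``perverse quotients'' is needed; that is minor.

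The genuine gap is in part (2), where ``run the dual argument in the tilted heart and push it through $\Phi$'' does not produce the proof. First, in the forward direction you must exhibit the injection $E \hookrightarrow f^*F(D)$: the paper obtains $\alpha \in \Hom(E, f^*F(D))$ from $\phi\colon f_*E \cong F$ via Grothendieck duality ($f^!F = f^*F\otimes\omega_{X/Y} = f^*F(D)$), and then uses the $1$-stability of $E$ to show $\mathcal{H}^{-1}(\Cone(\alpha))=0$, i.e.\ that $\alpha$ is injective as a map of sheaves. The adjunction isomorphism you cite from Proposition \ref{0-st to 1-st} is a different statement and does not supply this map. Second, and more seriously, $\Sub(f^*F(D), v)$ parametrizes subsheaves in $\Coh(X)$; subobjects in $\Coh(X)$ are \emph{not} subobjects in $\Per(X/Y)$ or $\widetilde{\Per}(X/Y)$, and $\Phi$ is only an equivalence $\Per(X/Y)\cong\Coh(\mathcal{A})$ (no $\widetilde{\Per}$-analogue is available in the paper), so there is no categorical equivalence that converts an arbitrary subsheaf $E \subset f^*F(D)$ with $\ch(E)=v$ into a sub-$\mathcal{A}$-module and thereby into a $1$-stable sheaf for free. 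The paper has to verify by hand that such an $E$ is $1$-stable: it shows the cokernel $C$ lies in $\mathcal{C}^0$ (via $\ch(C)\in\mathcal{S}$ and $C(-D)\in\Per(X/Y)$), then proves $E(-D)\in\Per(X/Y)$ by applying the criterion of Lemma \ref{criterion for per} together with the vanishing $\Hom(f^*M,\mathcal{O}_{L_y}(-1)[1])=0$, and finally uses $f^*F(D)\in\mathcal{F}_D$ and Lemma \ref{criterion for tor free} to get torsion-freeness and $\mu$-stability of $f_*(E(-D))$. None of these verifications appears in your plan, and they are the substance of part (2); conversely, the scheme-theoretic base-change machinery you propose to develop is not what the paper's proof requires.
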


\begin{proof}
(1) Let $E \in \xi^{-1}(F)$. 
Then we have an exact sequence 
\[
0 \to T \to f^{*}F=f^{*}f_{*}E \to E \to 0
\]
in $\Coh(X)$, i.e. 
$E \in \Quot(f^{*}F, v)$. 

For the converse, let $E \in \Quot(f^{*}F, v)$. 
Then we have 
\[
0 \to K \to f^{*}F \to E \to 0. 
\]
Note that $E \in \Per(X/Y)$. 
Since $K$ is torsion and $F$ is torsion free, 
we have $f_{*}K=0$ and 
$F \hookrightarrow f_{*}E$ is injective. 
Moreover, since $\ch(F)=\ch(f_{*}E)$, we have 
$F \cong f_{*}E$. 
Hence $E \in \xi^{-1}(F)$. 
Noting that 
$\Hom(f^{*}F, E) \cong \Hom(F, f_{*}E)=\mathbb{C}$, 
we conclude that 
$\xi^{-1}(F)=\Quot(f^{*}F, v)$. 

(2) Let $E \in (\xi^{+})^{-1}(F)$. 
Then by the proof of Proposition \ref{xi vs xi+}, 
we have the exact sequence as in (\ref{sub}): 
\[
0 \to E \to f^{*}F(D) \to C \to 0. 
\]
In other words, $E \in \Sub(f^{*}F(D), v)$. 

For the converse, let $E \in \Sub(f^{*}F(D), v)$.  
Then we have 
\[
0 \to E \to f^{*}F(D) \to C \to 0. 
\]

First we claim that $C \in \mathcal{C}^0$. 
Since $f^{*}F \in \Per(X/Y)$, 
we have $C(-D) \in \Per(X/Y)$ 
by Lemma \ref{property tor pair}. 
Hence by Lemma \ref{r1 per}, we have $\dR^1f_{*}C=0$. 
Furthermore, we have $\ch(C) \in \mathcal{S}$. 
Hence 
$\ch(f_{*}C)=\ch(\dR f_{*}C)=0$ 
and so we also have $f_{*}C=0$. 

Next we show that $E(-D) \in \Per(X/Y)$. 
To show this, it is enough to show that 
for $y \in C$, we have 
$\Hom(E(-D), \mathcal{O}_{L_{y}}(-1))=0$. 
Write $C(-D)=f^{*}M$. 
Applying $\Hom(-, \mathcal{O}_{L_{y}}(-1))$ 
to the exact sequence 
\[
0 \to E(-D) \to f^{*}F \to f^{*}M \to 0, 
\]
we have 
\[
0=\Hom(f^{*}F, \mathcal{O}_{L_{y}}(-1)) \to 
\Hom(E(-D), \mathcal{O}_{L_{y}}(-1)) \to 
\Hom(f^{*}M, \mathcal{O}_{L_{y}}(-1)[1]). 
\]

Hence it is enough to show that 
$\Hom(f^{*}M, \mathcal{O}_{L_{y}}(-1)[1])=0$. 
From the exact triangle 
\[
\dL^{-1}f^{*}M[1] \to \dL f^{*}M \to f^{*}M, 
\]
we have 
\begin{align*}
0=\Hom\left(
 \dL^{-1}f^{*}M[1], \mathcal{O}_{L_{y}}(-1)
 \right) 
&\to 
\Hom\left(
f^{*}M, 
\mathcal{O}_{L_{y}}(-1)[1]
 \right) \\
&\to 
\Hom\left(
 \dL f^{*}M, \mathcal{O}_{L_{y}}(-1)[1]
 \right)& \\
&\cong \Hom\left(
M, \dR f_{*}\mathcal{O}_{L_{y}}(-1)[1]
\right) \\ 
&=0. 
\end{align*}

As a conclusion, we have $E(-D) \in \Per(X/Y)$. 
Furthermore, since $f^{*}F(D) \in \mathcal{F}_{D}$, 
we also have $E \in \mathcal{F}_{D}$. 
Hence by Lemma \ref{criterion for tor free}, 
$f_{*}E(-D)$ is torsion free. 
Since $f_{*}E \cong F$ is $\mu$-stable, 
we conclude that 
$f_{*}E(-D)$ is also $\mu$-stable, i.e. 
$E \in (\xi^{+})^{-1}(F)$. 
As before, noting that 
$\Hom(E, f^{*}F(D)) \cong \Hom(f_{*}E, F)=\mathbb{C}$, 
we have 
$(\xi^{+})^{-1}(F)=\Sub(f^{*}F(D), v)$. 
\end{proof}


\section{Hilbert scheme of two points}
\label{Hilb}
In this section, we study the birational geometry 
of Hilbert scheme of two points using the 
flip-like diagram (\ref{zig-zag}) 
constructed in the previous sections. 
In the followings, we assume that 
$H^1(\mathcal{O}_{Y})=0$ and 
let $w:=(1, 0, \cdots, 0, -2) \in H^{*}(Y; \mathbb{Q})$, 
$v:=f^{*}w \in H^{*}(X; \mathbb{Q})$. 
Then we have 
$M^{H}(w)=\Hilb^2(Y)$, 
$M^{f^{*}H-\epsilon D}(v)=\Hilb^2(X).$ 
We will use that following notations: 
\begin{itemize}
\item For a $0$-dimensional closed subscheme $Z \subset X$ 
of length $2$, we denote its ideal sheaf as 
$I_{Z} \in \Hilb^2(X)$. 

\item $\Hilb^2(D/C) \subset \Hilb^2(X)$ denotes 
the relative Hilbert scheme which parametrizes 
$I_{Z} \in \Hilb^2(X)$ such that $Z$ is 
scheme-theoretically contained in a fiber 
of $\pi \colon D \to C$, i.e. 
there exists $y \in C$ such that 
$Z \subset L_{y}$. 

\item $\mathcal{I} \in \Coh(\Hilb^2(X) \times X)$ 
denotes the universal ideal sheaf on 
$\Hilb^2(X)$.  
\end{itemize}

We start with the following lemma: 

\begin{lem}
\label{stability of ideal sheaf}
Let $I_{Z} \in \Hilb^2(X)$ 
be an ideal sheaf of a length two 
closed subscheme $Z \subset X$. 
Then the following holds: 
\begin{enumerate}
\item If $Z \cap D= \emptyset$, then 
$I_{Z}$ is $0$-stable. 

\item If $Z \cap D \neq \emptyset$ and 
$I_{Z} \notin \Hilb^2(D/C)$, then 
$I_{Z}$ is $1$-stable but not $0$-stable. 

\item If $I_{Z} \in \Hilb^2(D/C)$, then 
$I_{Z}$ is $2$-stable but not $1$-stable. 
\end{enumerate}
\end{lem}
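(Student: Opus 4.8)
The plan is to analyze the stability of $I_Z$ by understanding $f_*(I_Z(-mD))$ and whether $I_Z(-mD)$ lies in $\Per(X/Y)$, using the criterion of Lemma \ref{criterion for per} (membership in $\Per(X/Y)$ is detected by $\Hom(-,\mathcal{O}_{L_y}(-1))=0$) together with Definition \ref{m-stability} (that $E$ is $m$-stable iff $E(-mD)$ is $0$-stable, i.e. $E(-mD)\in\Per(X/Y)$ and $f_*(E(-mD))$ is $\mu_H$-stable). The rank is $1$, so $\mu_H$-stability of any torsion free rank one sheaf is automatic; hence the whole problem reduces to two bookkeeping questions: for which $m$ is $f_*(I_Z(-mD))$ torsion free of the correct rank-one type, and for which $m$ does $I_Z(-mD)$ belong to $\Per(X/Y)$.

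First I would treat case (1). If $Z\cap D=\emptyset$, then $I_Z$ agrees with $\mathcal{O}_X$ near $D$, so the adjoint map $f^*f_*I_Z\to I_Z$ is surjective and $I_Z\in\Per(X/Y)$ by Lemma \ref{property of adjoint}(3); since $f_*I_Z=I_{f(Z)}$ is the torsion free (rank one) ideal sheaf of the length-two subscheme $f(Z)\subset Y$, it is $\mu_H$-stable, giving $0$-stability. For cases (2) and (3) the key computation is to restrict $I_Z$ to the fibers $L_y\cong\mathbb{P}^1$ and determine the splitting type, because the obstruction to $I_Z(-mD)\in\Per(X/Y)$ is exactly the existence of a quotient $\mathcal{O}_{L_y}(a)$ with $a\le -2$ after twisting, i.e. a nonzero map to $\mathcal{O}_{L_y}(-1)$. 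The twist by $\mathcal{O}(-mD)$ shifts the restriction degrees by $+m$ on each fiber meeting $D$ (since $D|_{L_y}=\mathcal{O}_{L_y}(-1)$), so increasing $m$ eventually pushes all fiber-degrees up into the perverse range; the question is pinning down the exact threshold in each geometric case.

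The main obstacle, and the heart of the argument, is the fiberwise analysis distinguishing case (2) from case (3). When $I_Z\notin\Hilb^2(D/C)$, the subscheme $Z$ is not contained in a single fiber $L_y$, so along any fiber $L_y$ the restriction is mild: $Z$ meets $D$ in at most one reduced point of each fiber, and one finds that $I_Z(-D)$ already lies in $\Per(X/Y)$ while $I_Z$ itself does not (equivalently, there is a nonzero map $I_Z\to\mathcal{O}_{L_y}(-1)$ but none after twisting down by $D$), yielding $1$-stable but not $0$-stable. When $I_Z\in\Hilb^2(D/C)$, the whole length-two scheme sits inside one fiber $L_y$, so the restriction to that fiber is more degenerate and one extra twist is needed: $I_Z(-D)$ still admits a map to $\mathcal{O}_{L_y}(-1)$ (failing $1$-stability) but $I_Z(-2D)$ does not (giving $2$-stability). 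I would make this precise by computing $\Hom(I_Z(-mD),\mathcal{O}_{L_y}(-1))$ via the defining sequence $0\to I_Z\to\mathcal{O}_X\to\mathcal{O}_Z\to0$ and the local structure of $Z$ relative to $L_y$, checking that $f_*(I_Z(-mD))$ stays torsion free at the relevant threshold so that $\mu_H$-stability is preserved.

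Throughout, the rank-one simplification means I never need genuine stability estimates, only torsion-freeness of the pushforward and the perverse membership criterion; so the proof is really a careful case-by-case restriction computation on the $\mathbb{P}^1$-fibers, with the split between "$Z$ meets $D$ transversally" and "$Z$ lies inside a fiber" accounting for the difference of one in the critical value of $m$. I expect the cleanest writeup to handle (1) directly, then establish for (2) and (3) the two claims "$I_Z(-mD)\in\Per(X/Y)$ for $m\ge$ threshold" and "$I_Z(-(m{-}1)D)\notin\Per(X/Y)$" separately, each reduced to Lemma \ref{criterion for per}.
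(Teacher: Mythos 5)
Your proposal is correct and follows essentially the same route as the paper: reduce to finding the smallest $m$ with $I_{Z}(-mD)\in\Per(X/Y)$ (the rank-one pushforward being automatically $\mu_{H}$-stable once torsion free), then apply Lemma \ref{criterion for per} by restricting $0\to I_{Z}\to\mathcal{O}_{X}\to\mathcal{O}_{Z}\to 0$ to the fibers $L_{y}$, where the torsion-free quotient of $I_{Z}|_{L_{y}}$ is $\mathcal{O}_{L_{y}}(-l)$ with $l$ the length of $Z\cap L_{y}$, so the threshold is exactly $m=l$. The case split you describe (empty intersection, transverse meeting, $Z$ inside a fiber) matches $l=0,1,2$ and is precisely the paper's computation.
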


\begin{proof}
By the proof of Proposition \ref{moduli on X}, 
it is enough to find the smallest 
$m \in \mathbb{Z}_{\geq 0}$ such that 
$I_{Z}(-mD) \in \Per(X/Y)$, i.e. 
\[
\Hom\left(
I_{Z}, \mathcal{O}_{L_{y}}(-m-1)
\right)
=\Hom\left(
I_{Z}(-mD), \mathcal{O}_{L_{y}}(-1)
\right)
=0 
\]
for all $y \in C$. 

Restricting the exact sequence 
\[
0 \to I_{Z} \to \mathcal{O}_{X} \to \mathcal{O}_{Z} \to 0
\]
to $L_{y}$, we get 
\[
0 \to T \to I_{Z}|_{L_{y}} \to \mathcal{O}_{L_{y}} 
\to \mathcal{O}_{Z \cap L_{y}} \to 0,  
\]
where $T \in \Coh(L_{y})$ is some torsion sheaf. 
Hence we get 
\[
0 \to T \to I_{Z}|_{L_{y}} \to \mathcal{O}_{L_{y}}(-l) \to 0,  
\]
where $l \in \mathbb{Z}$ is the length of $Z \cap L_{y}$. 
We conclude that  
\begin{align*}
\Hom\left(
I_{Z}, \mathcal{O}_{L_{y}}(-m-1)
\right)
&=\Hom\left(
I_{Z}|_{L_{y}}, \mathcal{O}_{L_{y}}(-m-1)
\right) \\ 
&=\Hom\left(
\mathcal{O}_{L_{y}}(-l), \mathcal{O}_{L_{y}}(-m-1)
\right). 
\end{align*}

Hence we get the result. 
\end{proof}

\begin{cor}
\label{exc1+}
The exceptional locus of $\xi^+_{1}$ is 
$\Exc(\xi^+_{1})=\Hilb^2(D/C)$. 
\end{cor}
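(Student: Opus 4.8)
The plan is to show that $\xi^+_1$ is an isomorphism exactly off $\Hilb^2(D/C)$, so that its exceptional locus is precisely this relative Hilbert scheme. Recall from Proposition~\ref{set} (applied with $m=1$) that the fibre of $\xi^+_1$ over a point $F \in M^{1,2}(v)$ is $\widetilde{\PQuot}(F, \beta)$, and that $\xi^+_1$ fails to be an isomorphism exactly where this fibre is positive-dimensional, equivalently where the $1$-stable sheaf $E^+$ sitting over $F$ admits a nonzero map to some $T[1]$ with $T \in \mathcal{C}^0$. First I would use Lemma~\ref{stability of ideal sheaf} to identify which ideal sheaves $I_Z \in \Hilb^2(X) \subset M^2(v)$ actually lie in the image of $\xi^+_1$ and get contracted: by part~(3) of that lemma, an ideal sheaf $I_Z$ is $2$-stable but \emph{not} $1$-stable precisely when $I_Z \in \Hilb^2(D/C)$, while by parts~(1) and~(2) the sheaves with $Z \cap D = \emptyset$ or $I_Z \notin \Hilb^2(D/C)$ are already $0$- or $1$-stable.

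The key step is therefore to match the failure of $1$-stability, as detected by Lemma~\ref{stability of ideal sheaf}(3), with the exceptional locus of $\xi^+_1$. Concretely, $I_Z \in \Hilb^2(X)$ sits in $M^2(v)$ but fails to descend to $M^1(v)$ exactly when $I_Z(-D) \notin \Per(X/Y)$, i.e. when $\Hom(I_Z, \mathcal{O}_{L_y}(-2)) \neq 0$ for some $y \in C$. The computation in the proof of Lemma~\ref{stability of ideal sheaf} shows this happens iff the length $l$ of $Z \cap L_y$ satisfies $l \geq 2$, which for a length-two subscheme forces $Z \subset L_y$ scheme-theoretically, that is, $I_Z \in \Hilb^2(D/C)$. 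I would then interpret this in terms of the wall-crossing diagram: over such an $I_Z$, there is a nonzero morphism to $\mathcal{O}_{L_y}(-1)[1]$ realizing $I_Z$ as a nontrivial element of the fibre $\widetilde{\PQuot}$, so $I_Z \in \Exc(\xi^+_1)$; conversely, when $I_Z \notin \Hilb^2(D/C)$ the sheaf is already $1$-stable and the fibre is a single reduced point, so $\xi^+_1$ is a local isomorphism there.

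The main obstacle I anticipate is bookkeeping the scheme-theoretic (as opposed to set-theoretic) identification of the exceptional locus: one must check that $\Hilb^2(D/C)$ is exactly the locus where the fibre dimension of $\xi^+_1$ jumps, and that this set-theoretic description of $\Exc(\xi^+_1)$ agrees with the structure coming from Proposition~\ref{fiber}(2), which describes the fibres of $\xi^+$ as $\Sub(f^*F(D), v)$. I would close the argument by combining the two inclusions: $I_Z \in \Hilb^2(D/C) \Rightarrow I_Z \in \Exc(\xi^+_1)$ from the failure of $1$-stability, and $I_Z \in \Exc(\xi^+_1) \Rightarrow I_Z \in \Hilb^2(D/C)$ from the length computation, yielding the claimed equality $\Exc(\xi^+_1) = \Hilb^2(D/C)$.
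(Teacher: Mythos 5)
Your argument is correct and is essentially the paper's own proof, which simply observes that the corollary follows directly from Lemma~\ref{stability of ideal sheaf}: the exceptional locus of $\xi^+_1$ is the locus in $\Hilb^2(X)\subset M^2(v)$ where $1$-stability fails, and that lemma identifies this locus as exactly $\Hilb^2(D/C)$. Your additional unwinding of the $\Hom(I_Z,\mathcal{O}_{L_y}(-2))$ criterion and the length computation just reproduces the content of that lemma's proof (note only that the relevant quotient object is $\mathcal{O}_{L_y}(-2)$, as in Lemma~\ref{relative hilb}, rather than $\mathcal{O}_{L_y}(-1)[1]$).
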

\begin{proof}
The assertion directly follows from 
Lemma \ref{stability of ideal sheaf}. 
\end{proof}

By Lemma \ref{stability of ideal sheaf}, 
we have the diagram: 

\begin{equation}
\xymatrix{
& &\ar[ld]^{\xi_{0}=f_{*}} \widetilde{M}^1(v) \ar[rd]_{\xi_{1}^{-}} & &\Hilb^2(X) \ar[ld]^{\xi_{1}^{+}} \subset M^2(v) \\
&\Hilb^2(Y) & &M^{1, 2}(v), & 
}
\end{equation}
where $\widetilde{M}^1(v)$ 
denotes the normalization of 
the connected component of 
$M^1(v)$ containing 
$\Hilb^2(X) \setminus \Hilb^2(D/C)$. 
In fact, we can actually show that the connectedness of 
$M^1(v)$ by using Proposition \ref{fiber}. 
However, we omit the proof here. 

In the following subsections, we will study 
the properties of these morphisms in details. 


\subsection{The diagram $\xi^{\pm}_{1}$ is a flip}
In this subsection, we will show that 
the diagram $\xi_{1}^{\pm}$ is a flip. 
The main technical tool used here is so-called 
elementary transformation. 
First we observe the following. 

\begin{lem}
\label{relative hilb}
Let $Z \subset L_{y}$ be a 
length $2$ closed subscheme ($y \in C$). 
Then the following statements hold: 
\begin{enumerate}
\item We have an exact sequence 
\begin{equation}
\label{decomp ideal}
0 \to I_{L_{y}} \to I_{Z} \to \mathcal{O}_{L_{y}}(-2) \to 0.  
\end{equation}

\item We have $I_{L_{y}} \in \Per(X/Y)$. 
\end{enumerate}
In particular, the sequence (\ref{decomp ideal}) 
is the decomposition with respect to the torsion pair 
$\left(
\mathcal{T} \otimes \mathcal{O}(D), 
\mathcal{F} \otimes \mathcal{O}(D) 
\right)$. 
\end{lem}

\begin{proof}
(1) Since $Z \subset L_{y}$ is length $2$, 
we have 
\[
0 \to \mathcal{O}_{L_{y}}(-2) \to \mathcal{O}_{L_{y}} \to \mathcal{O}_{Z} \to 0. 
\]
Hence we get the following 
commutative diagram: 
\[
\xymatrix{
& & & &0 \ar[d] & \\ 
& &0 \ar[d] & & \mathcal{O}_{L_{y}}(-2) \ar[d] & \\ 
&0 \ar[r] &I_{L_{y}} \ar[r] \ar[d] &\mathcal{O}_{X} \ar[r] \ar@{=}[d] &\mathcal{O}_{L_{y}} \ar[r] \ar[d] &0 \\ 
&0 \ar[r] &I_{Z} \ar[r] \ar[d]&\mathcal{O}_{X} \ar[r] &\mathcal{O}_{Z} \ar[r] \ar[d] &0 \\ 
& &T \ar[d] & &0 \\ 
& &0, 
}
\]
which implies $T=\mathcal{O}_{L_{y}}(-2)$ 
as required. 

(2) Pulling back the exact sequence 
\[
0 \to I_{y} \to \mathcal{O}_{Y} \to \mathcal{O}_{y} \to 0, 
\]
we get 
\[
0 \to \mathcal{O}_{L_{y}}(-1) \to f^{*}I_{y} \to \mathcal{O}_{X} \to \mathcal{O}_{L_{y}} \to 0. 
\]
It gives two short exact sequences 
\begin{align*}
&0 \to \mathcal{O}_{L_{y}}(-1) \to f^{*}I_{y} \to I_{L_{y}} \to 0, \\
&0 \to I_{L_{y}} \to \mathcal{O}_{X} \to \mathcal{O}_{L_{y}} \to 0. 
\end{align*}

By Lemma \ref{pull back in per}, 
we have 
$f^{*}I_{y} \in \Per(X/Y)$. 
Since there exists a surjection 
$f^{*}I_{y} \to I_{L_{y}}$, 
we also have $I_{L_{y}} \in \Per(X/Y)$ 
by Lemma \ref{property tor pair}. 

For the last assertion, we first note that 
$\mathcal{O}_{L_{y}}(-2) \otimes \mathcal{O}(-D)
\cong 
\mathcal{O}_{L_{y}}(-1)
\subset \mathcal{F}$. 
Furthermore, by (2) and Corollary \ref{cor of criterion}, 
we have $I_{L_{y}}(-D) \in \mathcal{T}$. 
Hence the sequence (\ref{decomp ideal}) 
is the decomposition with respect to the torsion pair 
$\left(
\mathcal{T} \otimes \mathcal{O}(D), 
\mathcal{F} \otimes \mathcal{O}(D) 
\right)$. 

\end{proof}

\begin{cor}
\label{fiber of xi1}
\begin{enumerate}
\item The restriction of $\xi^{+}_{1}$ to $\Hilb^2(D/C)$ 
is given by 
\[
\xi^{+}_{1} \colon \Hilb^2(D/C) \to C, \quad 
I_{Z} \mapsto I_{L_{y}}.  
\]

\item For every closed point $y$ of $C$, 
the fiber of $\xi^{+}_{1}$ over $I_{L_{y}}$ is 
given by 
\[
(\xi^{+}_{1})^{-1}(I_{L_{y}})=
\Hilb^2(L_{y}) \cong \mathbb{P}^2. 
\]

\item For every closed point $y$ of $C$, 
the fiber of $\xi^{-}_{1}$ over $I_{L_{y}}$ is 
given by 
\[
(\xi_{1}^-)^{-1}(I_{L_{y}})
=\mathbb{P}
\Ext^1\left(
I_{L_{y}}, \mathcal{O}_{L_{y}}(-2)
\right)^{\vee}. 
\]
\end{enumerate}
\end{cor}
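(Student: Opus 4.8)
The plan is to reduce each part to the torsion-pair description of the wall-crossing maps, using that $\xi^{\pm}_{1}$ is obtained from $\xi^{\pm}_{0}$ for the shifted character $v.e^{-D}$ via the isomorphism $(-)\otimes\mathcal{O}(-D)$. Recall from Proposition \ref{set}, together with Propositions \ref{1-st to 0-st} and \ref{0-st to 1-st}, that after this twist $\xi^{+}_{1}$ sends a sheaf $E$ to the $(\mathcal{T},\mathcal{F})$-subobject of $E(-D)$, while the fibre of $\xi^{-}_{1}$ over $I_{L_{y}}$ is the set $\PSub(I_{L_{y}}(-D),\beta)$ of $\Per(X/Y)$-subobjects of $I_{L_{y}}(-D)$ with quotient of class $\beta$.

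For (1), I would twist the sequence of Lemma \ref{relative hilb} (1) by $\mathcal{O}(-D)$; since $\mathcal{O}(-D)|_{L_{y}}\cong\mathcal{O}_{L_{y}}(1)$ this yields
\[
0 \to I_{L_{y}}(-D) \to I_{Z}(-D) \to \mathcal{O}_{L_{y}}(-1) \to 0 .
\]
I then claim this is the $(\mathcal{T},\mathcal{F})$-decomposition. Indeed $I_{L_{y}}\in\Per(X/Y)\cap\Coh(X)$ by Lemma \ref{relative hilb} (2), hence $I_{L_{y}}(-D)\in\Per(X/Y)\cap\Coh(X)$ by Corollary \ref{cor of criterion}, and this category is contained in $\mathcal{T}$; moreover $\mathcal{O}_{L_{y}}(-1)\in\mathcal{C}^{0}\subset\mathcal{F}$ since $f_{*}\mathcal{O}_{L_{y}}(-1)=0$. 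By uniqueness of the torsion decomposition, $\xi^{+}_{1}(I_{Z})=I_{L_{y}}$, so $\xi^{+}_{1}|_{\Hilb^2(D/C)}$ is the structure map $\Hilb^2(D/C)\to C$ of the relative Hilbert scheme.

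For (2), the preceding corollary gives $\Exc(\xi^{+}_{1})=\Hilb^2(D/C)$ and $\xi^{+}_{1}$ is an isomorphism off this locus, so $(\xi^{+}_{1})^{-1}(I_{L_{y}})\subset\Hilb^2(D/C)$; by (1) it is exactly the fibre over $y$ of $\Hilb^2(D/C)\to C$, namely $\Hilb^2(L_{y})$. As $L_{y}\cong\mathbb{P}^1$ we get $\Hilb^2(L_{y})\cong\mathbb{P}^2$, with scheme structure read off from the relative Hilbert scheme or matched to the $\Sub$-scheme of Proposition \ref{fiber} (2). For (3), the fibre is $\PSub(I_{L_{y}}(-D),\beta)$; a Chern-character count forces the quotient $T$ to satisfy $\ch(T)=\ch(\mathcal{O}_{L_{y}}(-1))$, and since $\mathcal{O}_{L_{y}}(-1)$ is the only such object of $\mathcal{C}^{0}$ supported on $L_{y}$, we get $T=\mathcal{O}_{L_{y}}(-1)$. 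Rotating the defining triangle $E^{-}(-D)\to I_{L_{y}}(-D)\to \mathcal{O}_{L_{y}}(-1)[1]$ and untwisting by $\mathcal{O}(D)$, every fibre element sits in an extension
\[
0 \to \mathcal{O}_{L_{y}}(-2) \to E^{-} \to I_{L_{y}} \to 0 ,
\]
the elementary transformation dual to Lemma \ref{relative hilb} (1), so the fibre is the projectivised space of such extensions, $\mathbb{P}\Ext^1(I_{L_{y}},\mathcal{O}_{L_{y}}(-2))^{\vee}$.

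The main obstacle will be (3): I must verify that every nonzero extension class produces a genuinely $1$-stable sheaf $E^{-}$ lying in $\widetilde{M}^1(v)$ and mapping to $I_{L_{y}}$ — that is, surjectivity of the associated $\Per(X/Y)$-map and $\mu$-stability of $f_{*}(E^{-}(-D))$ — that distinct classes up to scalar give distinct subobjects, and that this bijection upgrades to an isomorphism of schemes with the correct duality convention; here I would lean on the $\PSub$ and $\Quot$-scheme descriptions of Proposition \ref{fiber}. Keeping the twists by $\mathcal{O}(-D)$ and the index $\beta$ consistent throughout is the remaining bookkeeping point.
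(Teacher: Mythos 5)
Your argument follows the paper's proof essentially verbatim: parts (1) and (2) are exactly the combination of Proposition \ref{1-st to 0-st} (applied after the twist by $\mathcal{O}(-D)$) with Lemma \ref{relative hilb}, and part (3) is the same Chern-character count pinning the torsion piece down to $\mathcal{O}_{L_{y}}(-2)$ followed by the identification of the fibre with the projectivised extension space. The verifications you defer at the end (that nonzero extension classes yield points of $\widetilde{M}^1(v)$, via Proposition \ref{0-st to 0-st}, and the scheme-theoretic upgrade via Proposition \ref{fiber}) are likewise left implicit in the paper's own proof, so no further work is needed beyond what the paper itself supplies.
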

\begin{proof}
(1) Recall from Proposition \ref{set} that 
the morphism $\xi_{1}^+$ is defined by using 
the decomposition with respect to the torsion pair 
$\left(
\mathcal{T} \otimes \mathcal{O}(D), 
\mathcal{F} \otimes \mathcal{O}(D) 
\right)$. 
By Lemma \ref{relative hilb}, 
we have $\xi_{1}^+(I_{Z})=I_{L_{y}}$ 
as required. 

(2) By Corollary \ref{exc1+} and (1), 
for $I_{Z} \in \Hilb^2(X)$, 
we have $\xi_{1}^+(I_{Z})=I_{L_{y}}$ if and only if 
$Z$ is the closed subscheme of $L_{y}$, 
i.e. $I_{Z} \in \Hilb^2(L_{y})$. 
This proves the assertion. 

(3) For a $1$-stable sheaf $E \in {M}^1(v)$, 
we have $E \in (\xi_{1}^-)^{-1}(I_{L_{y}})$ 
if and only if 
there exists an element 
$T \in \mathcal{C}^0 \otimes \mathcal{O}(D)$ 
such that 
$E$ fits into a non-trivial exact sequence 
\[
0 \to T \to E \to I_{L_{y}} \to 0 
\]
by the construction of the morphism 
$\xi_{1}^-$. 
Note that we have 
$\ch(T)=\ch(E)-\ch(I_{L_{y}})
=\ch(I_{Z})-\ch(I_{L_{y}})
=\ch(\mathcal{O}_{L_{y}}(-2))$, 
where $I_{Z} \in \Hilb^2(X)$. 
Hence the only possibility is $T=\mathcal{O}_{L_{y}}(-2)$. 
We conclude that 
$(\xi_{1}^-)^{-1}(I_{L_{y}})
=\mathbb{P}
\Ext^1\left(
I_{L_{y}}, \mathcal{O}_{L_{y}}(-2)
\right)^{\vee}
$. 

\end{proof}

The following lemma determines the dimension of 
$\mathbb{P}
\Ext^1\left(
I_{L_{y}}, \mathcal{O}_{L_{y}}(-2)
\right)^{\vee}
$: 

\begin{lem}
\label{dim of fiber}
For every $y \in C$, 
we have 
$\ext^1(I_{L_{y}}, \mathcal{O}_{L_{y}}(-2))
=n$. 
\end{lem}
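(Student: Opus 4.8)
The plan is to compute all the groups $\Ext^i_X(I_{L_y}, \mathcal{O}_{L_y}(-2))$ by applying $\dR\Hom(-, \mathcal{O}_{L_y}(-2))$ to the structure sequence $0 \to I_{L_y} \to \mathcal{O}_X \to \mathcal{O}_{L_y} \to 0$. This yields a long exact sequence relating $\Ext^\bullet(I_{L_y}, \mathcal{O}_{L_y}(-2))$ to the two pieces $\Ext^\bullet(\mathcal{O}_X, \mathcal{O}_{L_y}(-2))$ and $\Ext^\bullet(\mathcal{O}_{L_y}, \mathcal{O}_{L_y}(-2))$. The first of these is simply $H^\bullet(L_y, \mathcal{O}_{L_y}(-2)) = H^\bullet(\mathbb{P}^1, \mathcal{O}(-2))$, which is $0$ in degree $0$ and one-dimensional in degree $1$, so the whole problem reduces to understanding the self-$\Ext$ groups of $\mathcal{O}_{L_y}$ twisted by $\mathcal{O}(-2)$, together with one connecting map.

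To compute $\Ext^\bullet_X(\mathcal{O}_{L_y}, \mathcal{O}_{L_y}(-2))$ I would use that $L_y \cong \mathbb{P}^1$ is a smooth subvariety of the smooth variety $X$, so the local $\mathcal{E}xt$ sheaf $\mathcal{E}xt^q_X(\mathcal{O}_{L_y}, \mathcal{O}_{L_y})$ is the exterior power $\wedge^q N_{L_y/X}$ of the normal bundle. The key geometric input is the identification $N_{L_y/X} \cong \mathcal{O}_{\mathbb{P}^1}^{\oplus(n-2)} \oplus \mathcal{O}_{\mathbb{P}^1}(-1)$. This follows from the normal bundle sequence $0 \to N_{L_y/D} \to N_{L_y/X} \to N_{D/X}|_{L_y} \to 0$: since $L_y$ is a fiber of the projective bundle $\pi\colon D \to C$ over the $(n-2)$-dimensional base, $N_{L_y/D} \cong \mathcal{O}_{L_y}^{\oplus(n-2)}$, while $N_{D/X}|_{L_y} = \mathcal{O}_X(D)|_{L_y} \cong \mathcal{O}_{\mathbb{P}^1}(-1)$ because $D\cdot L_y = -1$; the sequence splits since $\Ext^1_{\mathbb{P}^1}(\mathcal{O}(-1), \mathcal{O}^{\oplus(n-2)}) = H^1(\mathbb{P}^1, \mathcal{O}(1))^{\oplus(n-2)} = 0$.

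Feeding this into the local-to-global spectral sequence $H^p(L_y, \wedge^q N_{L_y/X} \otimes \mathcal{O}(-2)) \Rightarrow \Ext^{p+q}(\mathcal{O}_{L_y}, \mathcal{O}_{L_y}(-2))$ is then a routine computation over $\mathbb{P}^1$: every $H^0$ term vanishes, and the surviving $H^1$ contributions give $\Ext^1 = \mathbb{C}$ and $\Ext^2 \cong \mathbb{C}^n$ (the $n-2$ coming from the $\mathcal{O}(-2)^{\oplus(n-2)}$ summand of $N_{L_y/X}\otimes\mathcal{O}(-2)$ and the $2$ from the single $\mathcal{O}(-3)$ summand). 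To finish I must determine the connecting map $\mathbb{C} = \Ext^1(\mathcal{O}_{L_y}, \mathcal{O}_{L_y}(-2)) \to \Ext^1(\mathcal{O}_X, \mathcal{O}_{L_y}(-2)) = \mathbb{C}$, and showing it is an isomorphism is exactly the statement $\Hom(I_{L_y}, \mathcal{O}_{L_y}(-2)) = 0$. This I would obtain by restricting to $L_y$: the structure sequence gives $I_{L_y}|_{L_y} \cong \mathcal{T}or_1^X(\mathcal{O}_{L_y}, \mathcal{O}_{L_y}) = N^{\vee}_{L_y/X} \cong \mathcal{O}^{\oplus(n-2)} \oplus \mathcal{O}(1)$, and $\Hom_{\mathbb{P}^1}(\mathcal{O}^{\oplus(n-2)} \oplus \mathcal{O}(1), \mathcal{O}(-2)) = 0$. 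With this vanishing the long exact sequence collapses to $\Ext^1(I_{L_y}, \mathcal{O}_{L_y}(-2)) \cong \Ext^2(\mathcal{O}_{L_y}, \mathcal{O}_{L_y}(-2)) = \mathbb{C}^n$, that is $\ext^1 = n$.

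The main obstacle is precisely this connecting map: the crude dimension count from the $\Ext^\bullet(\mathcal{O}_X, -)$ and $\Ext^\bullet(\mathcal{O}_{L_y}, -)$ contributions is ambiguous by one dimension unless one verifies $\Hom(I_{L_y}, \mathcal{O}_{L_y}(-2)) = 0$. So the delicate points are pinning down the splitting of the normal bundle and the conormal identification $I_{L_y}|_{L_y} \cong N^{\vee}_{L_y/X}$, after which the $\mathbb{P}^1$-cohomology bookkeeping is mechanical.
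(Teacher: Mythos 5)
Your proof is correct and follows essentially the same route as the paper: reduce via the structure sequence of $L_{y}$ to the groups $\Ext^{i}(\mathcal{O}_{L_{y}},\mathcal{O}_{L_{y}}(-2))$, computed through the local-to-global spectral sequence and the splitting $N_{L_{y}/X}\cong\mathcal{O}^{\oplus(n-2)}\oplus\mathcal{O}(-1)$. You are in fact slightly more careful than the paper, which leaves implicit the vanishing $\Hom(I_{L_{y}},\mathcal{O}_{L_{y}}(-2))=0$ needed to kill the ambiguity in the long exact sequence (in the paper this also follows from $I_{L_{y}}\in\Per(X/Y)$, i.e.\ Lemma \ref{relative hilb}(2) combined with Lemma \ref{criterion for per}); your conormal-bundle argument for it is a valid alternative.
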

\begin{proof}
Applying 
$\Hom\left(
-, \mathcal{O}_{L_{y}}(-2)
\right)$ 
to the standard exact sequence 
\[
0 \to I_{L_{y}} \to \mathcal{O}_{X} \to \mathcal{O}_{L_{y}} \to 0, 
\]
the claim is reduced to compute 
$\ext^i(\mathcal{O}_{L_{y}}, \mathcal{O}_{L_{y}}(-2))$ 
for $i=1, 2$. 
Using the local-to-global spectral sequence 
\[
E_{2}^{p, q}
=H^{p}\left(
X, \mathcal{E}xt^q\left(
\mathcal{O}_{L_{y}}, \mathcal{O}_{L_{y}}(-2)
\right)
\right)
\Rightarrow 
\Ext^{p+q}(\mathcal{O}_{L_{y}}, \mathcal{O}_{L_{y}}(-2)) 
\]
and the isomorphism 
\begin{align*}
\mathcal{E}xt^q(\mathcal{O}_{L_{y}}, \mathcal{O}_{L_{y}}(-2))
&\cong 
\bigwedge^q \mathcal{N}_{L_{y}/X} \otimes \mathcal{O}_{L_{y}}(-2) \\
&=\bigwedge^q\left(
\mathcal{O}_{L_{y}}^{\oplus n-2} 
\oplus 
\mathcal{O}_{L_{y}}(-1)
\right)
\otimes 
\mathcal{O}_{L_{y}}(-2) \\
&\cong 
\mathcal{O}_{L_{y}}(-2)^{\oplus \binom{n-2}{q}} 
\oplus 
\mathcal{O}_{L_{y}}(-3)^{\oplus \binom{n-2}{q-1}} 
\end{align*}
(cf. \cite[Corollary 11.2]{huy06}), 
we have 
\[
\ext^i\left(
\mathcal{O}_{L_{y}}, 
\mathcal{O}_{L_{y}}(-2)
\right)
=
\begin{cases}
1 & (i=1) \\
(n-2)+2=n & (i=2). 
\end{cases}
\]
Using them, we get the result. 
\end{proof}

We next compute the normal bundle 
$\mathcal{N}_{\Hilb^2(D/C)/\Hilb^2(X)}$. 
To do that, let us define an embedding 
\[
D \to C\times X, \quad x \mapsto (f(x), x) 
\]
and denote the ideal sheaf of $D$ in $C \times X$ 
as $I_{D/ C\times X}$. 
We also use the following notations for projections: 

\[
\xymatrix{
& & \Hilb^2(X) \times X \ar[ld]^{p_{H}} \ar[rd]_{p_{X}} & 
& & C \times X \ar[ld]^{q_{C}} \ar[rd]_{q_{X}} & \\ 
&\Hilb^2(X) & & X 
&C & & X \\
}
\]

\begin{defin}
We define sheaves 
$\mathcal{E}_{\pm} \in \Coh(C)$ 
and 
$\mathcal{E} \in \Coh(\Hilb^2(X))$ 
as follows: 
\begin{align*}
&\mathcal{E}_{+}:=
\mathcal{E}xt^1_{q_{C}}\left(
\mathcal{O}_{D}(2D), 
I_{D/C \times X}
\right), \quad 
\mathcal{E}_{-}:=
\mathcal{E}xt^1_{q_{C}}\left(
I_{D/ C \times X}, 
\mathcal{O}_{D}(2D)
\right), \\
&\mathcal{E}:=
\mathcal{E}xt^1_{p_{H}}\left(
\mathcal{I}, 
\mathcal{I}
\right). 
\end{align*}
\end{defin}

We recall the following version of semicontinuity theorem: 

\begin{thm}[{\cite[Satz 3]{bps80}}]
\label{semicontinuity}
Let $g \colon M \to N$ 
be a flat morphism between 
smooth projective varieties. 
Let $E, F \in \Coh(M)$ 
be flat sheaves over $N$. 
Fix an integer 
$i \in \mathbb{Z}_{\geq 0}$. 
Assume that for every $p \in N$, 
$\ext^i(E_{p}, F_{p})$ is constant, 
where $E_{p}, F_{p} \in \Coh(M_{p})$ 
is the restriction of $E, F$ to 
the fiber $M_{p}:=g^{-1}(p)$. 
Then the sheaf 
$\mathcal{E}xt^i_{g}(E, F)$ 
is locally free. 
Furthermore, for $q=i-1, i$, 
$\mathcal{E}xt^q_{g}(E, F)$ commutes with the base change, i,e, 
for every $p \in N$, we have an isomorphism 
\[
\mathcal{E}xt^{q}_{g}(E, F)|_{\{p\}} 
\cong \Ext^q(E_{p}, F_{p}). 
\]
\end{thm}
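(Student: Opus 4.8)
The plan is to prove this as the standard ``cohomology and base change'' theorem applied to the \emph{relative local} $\Ext$ functor, the point being that smoothness of $M$ makes $\dR\mathcal{H}om$ perfect and hence well behaved under base change. Since the assertion is local on $N$, I would first replace $N$ by an affine open $\Spec A$; as $N$ is smooth it is in particular reduced, and this reducedness is exactly the hypothesis that will let Grauert's theorem apply. The central object is the relative derived Hom
\[
\mathcal{R}:=\dR g_{*}\dR\mathcal{H}om_{M}(E, F)\in D^b(N),
\]
whose cohomology sheaves are $\mathcal{H}^q(\mathcal{R})=\mathcal{E}xt^q_{g}(E,F)$. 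Because $M$ is smooth, $E$ admits a finite locally free resolution, so $\dR\mathcal{H}om_{M}(E,F)$ has bounded coherent cohomology; as $g$ is proper, $\mathcal{R}$ lies in $D^b_{\mathrm{coh}}(N)$, and since $N$ is smooth $\mathcal{R}$ is automatically perfect. Thus, locally on $\Spec A$, $\mathcal{R}$ is represented by a bounded complex $K^{\bullet}$ of finite free $A$-modules.

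The first substantive step is a derived base change identification. For a point $p\in N$ with inclusions $i_{p}\colon \{p\}\hookrightarrow N$ and $j_{p}\colon M_{p}\hookrightarrow M$, flatness of $g$ makes the fibre square Tor-independent, so $\dL i_{p}^{*}\dR g_{*}\cong \dR (g_{p})_{*}\dL j_{p}^{*}$. Using that $E$ is perfect on $M$, the derived pullback $\dL j_p^*$ commutes with $\dR\mathcal{H}om$, and since $E$ and $F$ are flat over $N$ their derived restrictions to the fibre coincide with the ordinary restrictions $E_{p},F_{p}$; therefore
\[
K^{\bullet}\otimes^{\dL}_{A}k(p)\cong \dL i_{p}^{*}\mathcal{R}\cong \dR\Hom_{M_{p}}(E_{p},F_{p}),
\]
and in particular $H^q(K^{\bullet}\otimes_{A}k(p))\cong \Ext^q(E_{p},F_{p})$ for every $q$. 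Note $M_p$ need not be smooth, but this is harmless: we only use the intrinsic complex $\dR\mathcal{H}om_{M_{p}}(E_{p},F_{p})$ and its hypercohomology.

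With this in hand the statement becomes pure linear algebra over $A$. By hypothesis $p\mapsto \dim_{k(p)}H^i(K^{\bullet}\otimes k(p))$ is constant, and $A$ is reduced, so Grauert's theorem yields that $\mathcal{E}xt^i_{g}(E,F)=H^i(K^{\bullet})$ is locally free and that the base change map $\varphi^i\colon H^i(K^{\bullet})\otimes k(p)\to H^i(K^{\bullet}\otimes k(p))$ is an isomorphism; combined with the previous display this gives local freeness and base change at $q=i$. For $q=i-1$ I would invoke the exchange property of $K^{\bullet}$: once $\varphi^i(p)$ is surjective and $H^i(K^{\bullet})$ is locally free, $\varphi^{i-1}(p)$ is forced to be surjective, and surjectivity of $\varphi^{i-1}(p)$ upgrades by Nakayama to an isomorphism in a neighbourhood. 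Together these give $\mathcal{E}xt^q_{g}(E,F)|_{\{p\}}\cong \Ext^q(E_{p},F_{p})$ for $q=i-1,i$, which is the pairing of indices appearing in the statement.

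I expect the derived base change step to be the main obstacle: one must justify carefully that $\dL j_{p}^{*}\dR\mathcal{H}om_{M}(E,F)\cong \dR\mathcal{H}om_{M_{p}}(\dL j_{p}^{*}E,\dL j_{p}^{*}F)$, which genuinely uses the perfectness of $E$ coming from smoothness of $M$, and that flatness of $E$ and $F$ over $N$ collapses the derived restrictions to the ordinary fibrewise sheaves. Once the complex $K^{\bullet}$ and its fibrewise cohomology are correctly identified, the passage through the cohomology-and-base-change machinery is entirely standard.
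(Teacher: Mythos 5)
The paper does not prove this statement: it is quoted verbatim as Satz 3 of \cite{bps80} and used as a black box, so there is no internal proof to compare yours against. On its own terms your argument is correct and is the standard algebraic route to this result. The two genuinely load-bearing points are exactly the ones you flag: (i) the identification $\dL i_{p}^{*}\dR g_{*}\dR\mathcal{H}om_{M}(E,F)\cong \dR\Hom_{M_{p}}(E_{p},F_{p})$, which needs Tor-independent (derived) base change for the flat $g$, perfectness of $E$ (from smoothness of $M$) to commute $\dL j_{p}^{*}$ past $\dR\mathcal{H}om$, and flatness of $E,F$ over $N$ to collapse $\dL j_{p}^{*}$ to the honest restriction; and (ii) the representation of the resulting perfect complex by a bounded complex $K^{\bullet}$ of finite free modules over the reduced ring $A$, after which Grauert gives local freeness and base change at $q=i$, and the exchange property ($\varphi^{i}$ surjective plus $H^{i}(K^{\bullet})$ locally free forces $\varphi^{i-1}$ surjective, hence bijective) gives base change at $q=i-1$. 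One cosmetic caveat: the paper's notation defines $\mathcal{E}xt^{q}_{g}$ as the derived functor of the composite $g_{*}\mathcal{H}om(E,-)$ rather than as $\mathcal{H}^{q}(\dR g_{*}\dR\mathcal{H}om(E,F))$; these agree because $\mathcal{H}om(E,I)$ is flasque for $I$ injective, and it would be worth one line to say so, but this does not affect the validity of your proof.
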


\begin{cor}
\begin{enumerate}
\item The sheaves 
$\mathcal{E}_{\pm}, \mathcal{E}$ 
are locally free. 

\item We have 
$\Hilb^2(D/C) \cong \mathbb{P}(\mathcal{E_{+}}^{\vee})$. 

\item The Hilbert scheme $\Hilb^2(X)$ 
is smooth of dimension $2n$ and 
its tangent bundle is given as 
$\mathcal{T}_{\Hilb^2(X)} 
\cong \mathcal{E}$. 
\end{enumerate}
\end{cor}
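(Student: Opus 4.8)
The plan is to handle the three assertions in turn. Local freeness in (1) will follow from the semicontinuity Theorem \ref{semicontinuity} once the relevant fibrewise first Ext-groups are shown to have constant dimension, and the geometric identifications (2) and (3) will be built on top of the resulting bundles. First I would record the flatness inputs: the projection $q_{C}\colon C\times X\to C$ is flat, and both $\mathcal{O}_{D}(2D)$ and $I_{D/C\times X}$ are flat over $C$, which reduces to the flatness of the $\mathbb{P}^1$-bundle $\pi\colon D\to C$ together with the sequence $0\to I_{D/C\times X}\to \mathcal{O}_{C\times X}\to \mathcal{O}_{D}\to 0$. By flatness the fibre of $q_{C}$ over $y\in C$ is $\{y\}\times X\cong X$ and restriction needs no Tor corrections, so $\mathcal{O}_{D}(2D)|_{\{y\}\times X}\cong \mathcal{O}_{L_{y}}(-2)$ (using $\mathcal{O}_{D}(D)|_{L_{y}}\cong \mathcal{O}_{L_{y}}(-1)$) and $I_{D/C\times X}|_{\{y\}\times X}\cong I_{L_{y}}$. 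Hence the fibre of $\mathcal{E}_{-}$ at $y$ is $\Ext^1(I_{L_{y}},\mathcal{O}_{L_{y}}(-2))$, of constant dimension $n$ by Lemma \ref{dim of fiber}, and the fibre of $\mathcal{E}_{+}$ is $\Ext^1(\mathcal{O}_{L_{y}}(-2),I_{L_{y}})$, which a computation entirely parallel to Lemma \ref{dim of fiber} (the local-to-global spectral sequence together with $\mathcal{N}_{L_{y}/X}\cong \mathcal{O}_{L_{y}}^{\oplus n-2}\oplus \mathcal{O}_{L_{y}}(-1)$) shows to be constant of dimension $3$, consistently with $\Hilb^2(L_{y})\cong \mathbb{P}^2$. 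Theorem \ref{semicontinuity} then gives that $\mathcal{E}_{\pm}$ are locally free of ranks $n$ and $3$, and the same theorem applies to the flat morphism $p_{H}$ and the flat universal ideal $\mathcal{I}$ to yield local freeness of $\mathcal{E}$, once the constancy of $\ext^1(I_{Z},I_{Z})$ is available from (3).

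For (2), the idea is to realise $\Hilb^2(D/C)$ as the projectivisation of the extensions classified by $\mathcal{E}_{+}$. By Lemma \ref{relative hilb}, for $Z\subset L_{y}$ of length two the ideal $I_{Z}$ sits in $0\to I_{L_{y}}\to I_{Z}\to \mathcal{O}_{L_{y}}(-2)\to 0$, and two such extensions have isomorphic middle term precisely when their classes in $\Ext^1(\mathcal{O}_{L_{y}}(-2),I_{L_{y}})$ differ by a scalar. Relativising over $C$, the universal extension over $\mathbb{P}(\mathcal{E}_{+}^{\vee})$ produces a flat family of length-two subschemes of the fibres of $\pi$, hence a morphism $\mathbb{P}(\mathcal{E}_{+}^{\vee})\to \Hilb^2(D/C)$; fibrewise over $y$ this is the identification $\mathbb{P}^2=\mathbb{P}(\Ext^1(\mathcal{O}_{L_{y}}(-2),I_{L_{y}})^{\vee})\cong \Hilb^2(L_{y})$, so the morphism is a fibrewise isomorphism between schemes smooth and proper over $C$, hence an isomorphism.

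For (3), I would first invoke the classical fact that $\Hilb^2(X)$ is smooth of dimension $2n$ for any smooth $n$-fold $X$ (for instance via $\Hilb^2(X)\cong \Bl_{\Delta}(X\times X)/\mathfrak{S}_{2}$, the group acting freely on the blow-up). Since $\Hilb^2(X)$ is an open and closed subscheme of the moduli space of stable sheaves $M^2(v)$ carrying the universal sheaf $\mathcal{I}$, its Zariski tangent space at $[I_{Z}]$ is canonically $\Ext^1(I_{Z},I_{Z})$; smoothness then forces $\ext^1(I_{Z},I_{Z})=2n$ for all $Z$, which is exactly the constancy needed to finish (1) for $\mathcal{E}$ and, via Theorem \ref{semicontinuity}, yields $\mathcal{E}|_{[Z]}\cong \Ext^1(I_{Z},I_{Z})$. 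The Kodaira--Spencer map of the universal family is a morphism $\mathcal{T}_{\Hilb^2(X)}\to \mathcal{E}$ whose fibre at $[I_{Z}]$ is precisely the canonical identification of the Zariski tangent space with $\Ext^1(I_{Z},I_{Z})$; as both sheaves are locally free of rank $2n$ and this map is a fibrewise isomorphism, it is an isomorphism of bundles.

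The main obstacle I expect is part (3): producing a canonical \emph{global} isomorphism $\mathcal{T}_{\Hilb^2(X)}\cong \mathcal{E}$ rather than a mere fibrewise dimension match. Everything rests on the smoothness of $\Hilb^2(X)$, so that the tangent sheaf is a bundle of the expected rank, and on the compatibility of the Kodaira--Spencer (Atiyah-class) map with the fibrewise identification of Zariski tangent spaces with $\Ext^1(I_{Z},I_{Z})$. One must also check that describing the tangent space by $\Ext^1(I_{Z},I_{Z})$, rather than by $\Hom(I_{Z},\mathcal{O}_{Z})$, is legitimate; here the hypothesis $H^1(\mathcal{O}_{Y})=0$, which forces $H^1(\mathcal{O}_{X})=0$ and hence the absence of a trace correction, is what makes the two descriptions agree.
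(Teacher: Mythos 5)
Your proposal is correct and follows essentially the same route as the paper: flatness over $C$ plus constancy of the fibrewise $\Ext^1$'s feeding into Theorem \ref{semicontinuity} for (1), the universal extension together with Lemma \ref{relative hilb} and the universal property of the Hilbert scheme for (2), and the classical smoothness of $\Hilb^2(X)$ combined with the fibrewise Kodaira--Spencer isomorphism for (3). If anything you are slightly more careful than the paper, which cites only Lemma \ref{dim of fiber} (covering $\mathcal{E}_{-}$) for the local freeness of $\mathcal{E}_{+}$ and does not spell out the constancy of $\ext^1(I_{Z},I_{Z})$ or the identification $T_{[I_Z]}\Hilb^2(X)\cong\Ext^1(I_Z,I_Z)$ that you rightly flag.
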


\begin{proof}
First of all, 
the smoothness of $\Hilb^2(X)$ is well known: 
the Hilbert-Chow morphism 
$\Hilb^2(X) \to \Sym^2(X)$ coincides with 
$\Bl_{\Delta_{X}}\Sym^2(X)$, i.e. 
$\Hilb^2(X) \cong \Bl_{\Delta_{X}}\Sym^2(X)$. 
Moreover, $\Sym^2(X)$ has only 
$\frac{1}{2}(1, \cdots, 1)$-type singularity 
along the diagonal $\Delta_{X} \subset \Sym^2(X)$. 
Hence $\Hilb^2(X)$ is smooth. 

(1) Since $D$ is flat over $C$, 
$I_{D/C \times D}$ and $\mathcal{O}_{D}(2D)$ 
are flat over $C$. 
Hence by Lemma \ref{dim of fiber} and 
Theorem \ref{semicontinuity}, 
the assertion follows. 

(2) We have the universal extension sheaf 
$\mathcal{F} \in 
\Coh\left(
\mathbb{P}(\mathcal{E}_{+}^{\vee}) 
\times X 
\right)$ 
which fits into the exact sequence 
\begin{equation}
\label{univ ext}
0 \to 
(\pi^{+}_{X})^{*} I_{D/C \times X} \otimes p^{*} \mathcal{O}_{\pi^{+}}(1) 
\to \mathcal{F} \to (\pi^{+}_{X})^{*} \mathcal{O}_{D}(2D) \to 0 
\end{equation}
(cf. \cite[Example 2.1.12]{hl97}). 
Here $\pi^{+} \colon \mathbb{P}(\mathcal{E}_{+}^{\vee}) \to C$ 
is the structure morphism of the projective space bundle, 
$\pi^{+}_{X} \colon 
\mathbb{P}(\mathcal{E}_{+}^{\vee}) \times X 
\to C \times X$ 
is the base change morphism, 
and $p \colon \mathbb{P}(\mathcal{E}_{+}^{\vee}) \times X 
\to \mathbb{P}(\mathcal{E}_{+}^{\vee})$ is the projection. 
The sheaf $\mathcal{F}$ parametrizes all the extensions 
\[
0 \to I_{L_{y}} \to F \to \mathcal{O}_{L_{y}}(-2) \to 0. 
\]

By Lemma \ref{relative hilb}, we have 
$F \cong I_{Z} \in \Hilb^2(D/C)$. 
Hence by the universality of the Hilbert scheme, 
we get 
$\Hilb^2(D/C) \cong \mathbb{P}(\mathcal{E}_{+}^{\vee})$ 
as required. 

(3) Consider the Kodaira-Spencer map 
$KS \colon \mathcal{T}_{\Hilb^2(X)} 
\to \mathcal{E}$. 
Since $\mathcal{E}$ commutes with the base change, 
$KS$ restricts to an isomorphism 
$KS_{p} \colon T_{p}\Hilb^2(X) \to \Ext^1(I_{Z}, I_{Z})$
for each $p=[I_{Z}] \in \Hilb^2(X)$. 
Hence $KS$ is surjective morphism 
between locally free sheaves of the same rank. 
We conclude that 
$\mathcal{T}_{\Hilb^2(X)} 
\cong \mathcal{E}$. 
\end{proof}

Now we can compute the normal bundle: 

\begin{lem}[cf. {\cite[Proposition 3.7]{fq95}}]
\label{normal bundle}
We have an isomorphism 
\[
\mathcal{N}_{\Hilb^2(D/C)/\Hilb^2(X)}
\cong (\pi^{+})^{*}\mathcal{E_{-}} \otimes \mathcal{O}_{\pi^{+}}(-1). 
\]
\end{lem}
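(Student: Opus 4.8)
The plan is to compute the normal bundle directly from the deformation theory of the universal family, following the strategy of \cite[Proposition 3.7]{fq95}. Write $P:=\Hilb^2(D/C)\cong\mathbb{P}(\mathcal{E}_{+}^{\vee})$, $M:=\Hilb^2(X)$, and let $\iota\colon P\hookrightarrow M$ be the inclusion. Denote by $\mathcal{F}=\mathcal{I}|_{P\times X}$ the restriction of the universal ideal sheaf; by part (2) of the preceding corollary this is the universal extension sheaf sitting in (\ref{univ ext}), which I abbreviate as $0\to\mathcal{A}\to\mathcal{F}\to\mathcal{B}\to0$ with $\mathcal{A}=(\pi^{+}_{X})^{*}I_{D/C\times X}\otimes p^{*}\mathcal{O}_{\pi^{+}}(1)$ and $\mathcal{B}=(\pi^{+}_{X})^{*}\mathcal{O}_{D}(2D)$. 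Since $\Hilb^2(X)$ is smooth of dimension $2n$, the group $\ext^1(I_{Z},I_{Z})$ is constant, so by base change (Theorem \ref{semicontinuity}) the identification $\mathcal{T}_{M}\cong\mathcal{E}$ restricts to an isomorphism $\iota^{*}\mathcal{T}_{M}\cong\mathcal{E}xt^1_{p}(\mathcal{F},\mathcal{F})$, where $p\colon P\times X\to P$ is the projection. The normal bundle is the cokernel of the Kodaira--Spencer map $\mathcal{T}_{P}\to\iota^{*}\mathcal{T}_{M}$, and the whole point is to identify this cokernel with the single graded piece $\mathcal{E}xt^1_{p}(\mathcal{A},\mathcal{B})$.

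To do this I would feed the sequence $0\to\mathcal{A}\to\mathcal{F}\to\mathcal{B}\to0$ into the relative $\mathcal{E}xt_{p}$-functors in each variable and combine the two resulting long exact sequences (equivalently, run the spectral sequence obtained by applying the relative functors in both slots). This produces a filtration of $\mathcal{E}xt^1_{p}(\mathcal{F},\mathcal{F})$ whose graded pieces are built from the relative sheaves $\mathcal{E}xt^1_{p}(\mathcal{A},\mathcal{A})$, $\mathcal{E}xt^1_{p}(\mathcal{B},\mathcal{B})$, $\mathcal{E}xt^1_{p}(\mathcal{B},\mathcal{A})$ and $\mathcal{E}xt^1_{p}(\mathcal{A},\mathcal{B})$. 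Fibrewise over $y\in C$, the pointwise $\Ext$-computations from the proof of Lemma \ref{dim of fiber} (for $A=I_{L_{y}}$ and $B=\mathcal{O}_{L_{y}}(-2)$) supply the vanishings needed to make these sequences degenerate, so that no spurious terms appear. The contributions of $\mathcal{E}xt^1_{p}(\mathcal{A},\mathcal{A})$ and $\mathcal{E}xt^1_{p}(\mathcal{B},\mathcal{B})$, together with the extension-class term $\mathcal{E}xt^1_{p}(\mathcal{B},\mathcal{A})$, assemble --- via the relative Euler sequence of $\pi^{+}\colon\mathbb{P}(\mathcal{E}_{+}^{\vee})\to C$ and the exact sequence $0\to\mathcal{T}_{P/C}\to\mathcal{T}_{P}\to(\pi^{+})^{*}\mathcal{T}_{C}\to0$ --- into exactly the image of the Kodaira--Spencer map, i.e. into $\mathcal{T}_{P}$. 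The remaining piece $\mathcal{E}xt^1_{p}(\mathcal{A},\mathcal{B})$ is therefore the normal bundle.

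It remains to evaluate $\mathcal{E}xt^1_{p}(\mathcal{A},\mathcal{B})$. Since $\pi^{+}$ is a projective bundle it is flat, hence $\pi^{+}_{X}$ is flat and relative $\mathcal{E}xt$ commutes with this base change; pulling the twist $\mathcal{O}_{\pi^{+}}(1)$ out of the contravariant slot gives
\[
\mathcal{E}xt^1_{p}(\mathcal{A},\mathcal{B})
\cong(\pi^{+})^{*}\mathcal{E}xt^1_{q_{C}}\bigl(I_{D/C\times X},\mathcal{O}_{D}(2D)\bigr)\otimes\mathcal{O}_{\pi^{+}}(-1)
=(\pi^{+})^{*}\mathcal{E}_{-}\otimes\mathcal{O}_{\pi^{+}}(-1).
\]
Local freeness and commutation with base change are guaranteed by Theorem \ref{semicontinuity}, whose hypothesis holds because $\ext^1(I_{L_{y}},\mathcal{O}_{L_{y}}(-2))=n$ is constant in $y$ by Lemma \ref{dim of fiber}. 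Combining with the previous paragraph yields the desired isomorphism.

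The main obstacle is the middle step: verifying that the Kodaira--Spencer image is \emph{precisely} the sub-bundle assembled from $\mathcal{E}xt^1_{p}(\mathcal{A},\mathcal{A})$, $\mathcal{E}xt^1_{p}(\mathcal{B},\mathcal{B})$ and $\mathcal{E}xt^1_{p}(\mathcal{B},\mathcal{A})$, so that the quotient is honestly the single reversed-$\mathcal{E}xt$ term rather than merely an iterated extension built from it. This demands the pointwise vanishing of the $\mathcal{H}om$ and $\mathcal{E}xt^2$ contributions among $I_{L_{y}}$ and $\mathcal{O}_{L_{y}}(-2)$, together with a careful check that the relative tangent directions of the $\mathbb{P}(\mathcal{E}_{+}^{\vee})$-fibers are matched with the extension-class term $\mathcal{E}xt^1_{p}(\mathcal{B},\mathcal{A})$ and the base directions along $C$ with the diagonal $\mathcal{E}xt^1$-terms; the bookkeeping of the $\mathcal{O}_{\pi^{+}}(\pm1)$ twists must also be kept consistent throughout.
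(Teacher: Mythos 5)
Your proposal is correct and follows essentially the same route as the paper: the paper also applies the two relative $\mathcal{H}om$ functors to the universal extension sequence to produce the composite map $\delta$ from $\mathcal{E}|_{\Hilb^2(D/C)}\cong\iota^{*}\mathcal{T}_{\Hilb^2(X)}$ onto $\mathcal{E}xt^1_{p}(\mathcal{A},\mathcal{B})\cong(\pi^{+})^{*}\mathcal{E}_{-}\otimes\mathcal{O}_{\pi^{+}}(-1)$, checks surjectivity fibrewise via the spectral-sequence computations of Lemma \ref{dim of fiber}, and identifies the kernel with $\mathcal{T}_{\Hilb^2(D/C)}$ by the inclusion $T_{p}\Hilb^2(D/C)\subset\ker(\delta_{p})$ plus a dimension count, deferring the matching of tangent directions to \cite[Proposition 3.7]{fq95} exactly as you flag in your final paragraph. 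The only cosmetic difference is that the paper works with the kernel of the composite $\delta=\delta_{2}\circ\delta_{1}$ rather than explicitly assembling the other three graded pieces of the filtration into $\mathcal{T}_{P}$, which sidesteps some of the bookkeeping you describe.
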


\begin{proof}
First we construct a morphism 
$\mathcal{E}|_{\Hilb^2(D/C)} 
\to (\pi^{+})^{*}\mathcal{E}_{-} 
\otimes \mathcal{O}_{\pi^{+}}(-1)$. 
Applying 
$\dR\pi^{+}_{*}\dR\mathcal{H}om(\mathcal{I}|_{\Hilb^2(D/C)}, -)$ 
and 
$\dR\pi^{+}_{*}\dR\mathcal{H}om\left(
-, (\pi^{+}_{X})^{*}\mathcal{O}_{D}(2D)
\right)$ 
to the exact sequence (\ref{univ ext}) 
and taking its cohomology, 
we get 
\begin{equation} 
\delta_{1} \colon 
\mathcal{E}|_{\Hilb^2(D/C)} \to
\mathcal{E}xt^1_{\pi^{+}}\left(
\mathcal{I}|_{\Hilb^2(D/C)}, 
(\pi^{+}_{X})^{*}\mathcal{O}_{D}(2D)
\right) 
\end{equation} 
and 
\begin{align} 
\delta_{2} \colon 
\mathcal{E}xt^1_{\pi^{+}}
&\left(
\mathcal{I}|_{\Hilb^2(D/C)}, 
(\pi^{+}_{X})^{*}\mathcal{O}_{D}(2D)
\right) 
\to \\
&\mathcal{E}xt^1_{\pi^{+}}\left(
(\pi^{+}_{X})^{*} I_{D/C \times X} \otimes p^{*} \mathcal{O}_{\pi^{+}}(1), 
(\pi^{+}_{X})^{*}\mathcal{O}_{D}(2D)
\right). \notag 
\end{align} 

Note that we used 
$\mathcal{F}\cong \mathcal{I}|_{\Hilb^2(D/C)}$ 
above. 
Straightforward computation shows that 
\[
\mathcal{E}xt^1_{\pi^{+}}\left(
(\pi^{+}_{X})^{*} I_{D/C \times X} \otimes p^{*} \mathcal{O}_{\pi^{+}}(1), 
(\pi^{+}_{X})^{*}\mathcal{O}_{D}(2D)
\right) 
\cong (\pi^{+})^{*}\mathcal{E}^{-} \otimes \mathcal{O}_{\pi^{+}}(-1). 
\]

Hence we get the morphism 
\[
\delta:=\delta_{2} \circ \delta_{1} \colon 
\mathcal{E}|_{\Hilb^2(D/C)} 
\to (\pi^{+})^{*}\mathcal{E}_{-} 
\otimes \mathcal{O}_{\pi^{+}}(-1). 
\]

We will prove that 
$\delta$ is surjective and 
$\ker \delta 
\cong \mathcal{T}_{\Hilb^2(D/C)}$. 
To show that, it is enough to show the following: 
For every $p=[I_{Z}] \in \Hilb^2(D/C)$ with 
$Z \subset L_{y}$, 
\begin{enumerate}
\item the restriction 
$
\delta_{p} \colon 
T_{p}\Hilb^2(X)|_{\Hilb^2(D/C)} 
\to \Ext^1(I_{L_{y}}, \mathcal{O}_{L_{y}}(-2))
$
is surjective, 

\item $\ker(\delta_{p})=T_{p}\Hilb^2(D/C)$. 
\end{enumerate}

In (1), we can actually show that 
both $\delta_{1, p}$ and $\delta_{2, p}$ 
are surjective 
by using the spectral sequence argument 
as in Lemma \ref{dim of fiber}. 

For (2), it is now enough to show that 
$T_{p}\Hilb^2(D/C) \subset \ker(\delta_{p})$ 
since they are vector spaces of the same dimension. 
The argument is exactly same as 
\cite[Proposition 3.7]{fq95} and  
hence we omit the detail. 
\end{proof}

The following result directly follows from 
Lemma \ref{normal bundle}: 

\begin{cor}
\begin{enumerate}
\item When $n=2$, 
$\xi^{+}_{1}$ is a flipping contraction. 

\item When $n=3$, 
$\xi^{+}_{1}$ is a flopping contraction. 

\item When $n \geq 4$, 
$\xi^{+}_{1}$ is an anti-flipping contraction. 
\end{enumerate}
\end{cor}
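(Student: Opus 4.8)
The plan is to read off the type of $\xi^{+}_{1}$ from the sign of $K_{\Hilb^2(X)}$ on the curves it contracts. By Corollary \ref{fiber of xi1}, the non-trivial fibres of $\xi^{+}_{1}$ are the projective planes $\Hilb^2(L_{y}) \cong \mathbb{P}^2$ sitting inside the exceptional locus $\Exc(\xi^{+}_{1})=\Hilb^2(D/C)=\mathbb{P}(\mathcal{E}_{+}^{\vee})$, one for each $y \in C$. Since $\dim \Hilb^2(D/C)=n$ while $\dim \Hilb^2(X)=2n$, the exceptional locus has codimension $n \geq 2$, so $\xi^{+}_{1}$ is a small contraction rather than a divisorial one; moreover the lines $\ell \subset \mathbb{P}^2$ in a single fibre are numerically proportional and the family over $C$ is connected, so they sweep out a single numerical class. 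Hence it suffices to compute $K_{\Hilb^2(X)}.\ell$ for one such line $\ell$.

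First I would pin down the normal bundle of a fibre $\mathbb{P}^2=\Hilb^2(L_{y})$ inside $\Hilb^2(X)$. Restricting Lemma \ref{normal bundle} to this fibre, and using that $(\pi^{+})^{*}\mathcal{E}_{-}$ is trivial there while $\mathcal{O}_{\pi^{+}}(-1)$ restricts to $\mathcal{O}_{\mathbb{P}^2}(-1)$, together with Lemma \ref{dim of fiber} which shows $\mathcal{E}_{-}$ has rank $n$, I obtain
\[
\mathcal{N}_{\Hilb^2(D/C)/\Hilb^2(X)}|_{\mathbb{P}^2} \cong \mathcal{O}_{\mathbb{P}^2}(-1)^{\oplus n}.
\]
On the other hand, $\mathbb{P}^2$ is a fibre of the smooth projective bundle $\pi^{+} \colon \Hilb^2(D/C) \to C$, so its normal bundle inside $\Hilb^2(D/C)$ is the trivial bundle $\mathcal{O}_{\mathbb{P}^2}^{\oplus(n-2)}$. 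Feeding these into the short exact sequence
\[
0 \to \mathcal{O}_{\mathbb{P}^2}^{\oplus(n-2)} \to \mathcal{N}_{\mathbb{P}^2/\Hilb^2(X)} \to \mathcal{O}_{\mathbb{P}^2}(-1)^{\oplus n} \to 0
\]
gives $\det \mathcal{N}_{\mathbb{P}^2/\Hilb^2(X)} \cong \mathcal{O}_{\mathbb{P}^2}(-n)$.

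Then the adjunction formula $\omega_{\mathbb{P}^2} \cong \omega_{\Hilb^2(X)}|_{\mathbb{P}^2} \otimes \det \mathcal{N}_{\mathbb{P}^2/\Hilb^2(X)}$ yields $K_{\Hilb^2(X)}|_{\mathbb{P}^2} \cong \mathcal{O}_{\mathbb{P}^2}(n-3)$, so that $K_{\Hilb^2(X)}.\ell = n-3$ on a line $\ell \subset \mathbb{P}^2$. Reading off the sign finishes the argument: for $n=2$ one has $K_{\Hilb^2(X)}.\ell<0$, so $\xi^{+}_{1}$ is a flipping contraction; for $n=3$ one has $K_{\Hilb^2(X)}.\ell=0$, a flopping contraction; and for $n \geq 4$ one has $K_{\Hilb^2(X)}.\ell>0$, an anti-flipping contraction.

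The computation is short precisely because Lemma \ref{normal bundle} already carries the hardest input. The only points demanding genuine care are the two identifications feeding the normal-bundle sequence — that the fibres of $\pi^{+}$ have trivial normal bundle in $\Hilb^2(D/C)$, and that $\mathcal{O}_{\pi^{+}}(-1)$ restricts to $\mathcal{O}_{\mathbb{P}^2}(-1)$ on a fibre — together with the verification that $\Exc(\xi^{+}_{1})$ has codimension at least two, so that we are genuinely in the flip/flop/anti-flip regime. I do not expect any serious obstacle beyond this bookkeeping.
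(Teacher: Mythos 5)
Your proposal is correct and follows essentially the same route as the paper, which simply asserts that the corollary ``directly follows from Lemma \ref{normal bundle}''; your adjunction computation $K_{\Hilb^2(X)}.\ell=n-3$ via $\mathcal{N}_{\Hilb^2(D/C)/\Hilb^2(X)}|_{\mathbb{P}^2}\cong\mathcal{O}_{\mathbb{P}^2}(-1)^{\oplus n}$ is exactly the intended (and here left implicit) argument. The bookkeeping you flag — triviality of the fibre's normal bundle in $\Hilb^2(D/C)$, the restriction of $\mathcal{O}_{\pi^{+}}(-1)$, and the codimension-$n\geq 2$ check — is all handled correctly.
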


Let $\mu \colon 
M:=\Bl_{\Hilb^2(D/C)}\Hilb^2(X) 
\to \Hilb^2(X)$, 
$E \subset M$ be the $\mu$-exceptional divisor, 
$\nu:=\mu|_{E} \colon D \to \Hilb^2(D/C)$. 
Note that by Lemma \ref{normal bundle}, 
we know that 
$E \cong 
\mathbb{P}(\mathcal{E}_{-}^{\vee}) 
\times_{C} 
\mathbb{P}(\mathcal{E}_{+}^{\vee}) 
$. 
By the elementary transformation, 
we will construct the family 
$\mathcal{G}$ 
of $1$-stable 
sheaves on $M$, which gives us the morphism 
$M \to \widetilde{M}^1(v)$. 

Pulling back the exact sequence (\ref{univ ext}), 
we get 
\[
0 \to \nu^{*}_{X}(\pi^{+}_{X})^{*}\left(
I_{D/C \times X} 
\otimes (\mathcal{O}_{\pi^{+}}(1)) 
\right) 
\to \mu_{X}^{*}\mathcal{I}|_{E \times X} 
\to \nu_{X}^{*}(\pi^{+})^*\mathcal{O}_{D}(2D) 
\to 0 
\]
in $\Coh(M \times X)$. 
Now define 
\[
\mathcal{G}:=
\ker\left(
\mu_{X}^*\mathcal{I} \to 
\mu_{X}^{*}\mathcal{I}|_{E \times X} 
\to \nu_{X}^{*}(\pi^{+})^*\mathcal{O}_{D}(2D) 
\right). 
\]

\begin{lem}
\label{elementary transform}
\begin{enumerate}
\item The sheaf 
$\mathcal{G}$ is a flat family of 
$1$-stable sheaves and hence 
defines the morphism 
$\sigma \colon M \to \widetilde{M}^1(v)$. 

\item The restriction 
$\sigma|_{E} \colon E \to \sigma(E)$ 
coincides with the projection morphism 
$\mathbb{P}(\mathcal{E}_{-}^{\vee}) 
\times_{C} 
\mathbb{P}(\mathcal{E}_{+}^{\vee}) 
\to \mathbb{P}(\mathcal{E}_{-}^{\vee})$. 
\end{enumerate}
\end{lem}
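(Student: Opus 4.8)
The plan is to run the argument through the standard machinery of elementary transformations, in the spirit of \cite[Proposition 3.7]{fq95} and Lemma \ref{normal bundle}. By construction $\mathcal{G}$ sits in a short exact sequence
\[
0 \to \mathcal{G} \to \mu_X^*\mathcal{I} \to \iota_*\mathcal{Q} \to 0
\]
on $M \times X$, where $\iota \colon E\times X \hookrightarrow M\times X$ is the inclusion and $\mathcal{Q}:=\nu_X^*(\pi^+)^*\mathcal{O}_D(2D)$, a sheaf supported on the Cartier divisor $E\times X$. The first task is to identify the fibers $\mathcal{G}|_s:=\mathcal{G}|_{\{s\}\times X}$; flatness of $\mathcal{G}$ over $M$ will then follow, since $M$ is smooth (hence reduced) and all fibers will be seen to have Chern character $v$, so that the Hilbert polynomial of $\mathcal{G}|_s$ is constant (alternatively one may quote Maruyama's flatness statement for elementary transformations directly).

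Over $M\setminus E\cong \Hilb^2(X)\setminus\Hilb^2(D/C)$ the map $\mu$ is an isomorphism and $\mathcal{G}$ restricts to $\mathcal{I}$, so $\mathcal{G}|_s=I_Z$ with $I_Z\notin\Hilb^2(D/C)$; these are $1$-stable by Lemma \ref{stability of ideal sheaf}. For $s\in E$ lying over $[I_Z]\in\Hilb^2(D/C)$ with $Z\subset L_y$, I would restrict the defining sequence to $\{s\}\times X$ and read off the long exact $\mathcal{T}or$-sequence. Since $\mu_X^*\mathcal{I}$ is flat over $M$ we have $\mathcal{T}or_1(\mu_X^*\mathcal{I},k(s))=0$, while $\mathcal{T}or_1(\iota_*\mathcal{Q},k(s))\cong \mathcal{Q}|_s\otimes \mathcal{N}_{E/M}^{\vee}|_s$ because $\iota_*\mathcal{Q}$ is supported on the divisor $E\times X$. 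Combined with the identification $\ker(I_Z\to\mathcal{O}_{L_y}(-2))\cong I_{L_y}$ coming from Lemma \ref{relative hilb} (1), this yields
\[
0 \to \mathcal{O}_{L_y}(-2) \to \mathcal{G}|_s \to I_{L_y} \to 0,
\]
the twist by the line $\mathcal{N}_{E/M}^{\vee}|_s$ being absorbed into $\mathcal{O}_{L_y}(-2)$. In particular every fiber has Chern character $v$, completing the flatness discussion.

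It then remains to prove that this extension is non-split, for by Corollary \ref{fiber of xi1} (3) the non-split extensions of $I_{L_y}$ by $\mathcal{O}_{L_y}(-2)$ are exactly the $1$-stable sheaves in $(\xi^-_1)^{-1}(I_{L_y})=\mathbb{P}\Ext^1(I_{L_y},\mathcal{O}_{L_y}(-2))^\vee=\mathbb{P}(\mathcal{E}_-^\vee)_y$. Here I would invoke Lemma \ref{normal bundle}: $E$ is the projectivized normal bundle $\mathbb{P}(\mathcal{N}_{\Hilb^2(D/C)/\Hilb^2(X)})$ with $\mathcal{N}_{\Hilb^2(D/C)/\Hilb^2(X)}\cong(\pi^+)^*\mathcal{E}_-\otimes\mathcal{O}_{\pi^+}(-1)$, and the extension class of $\mathcal{G}|_s$ produced by the elementary transformation is precisely the tautological normal direction at $s$, i.e. the image of $s$ under the projection $E=\mathbb{P}(\mathcal{E}_-^\vee)\times_C\mathbb{P}(\mathcal{E}_+^\vee)\to\mathbb{P}(\mathcal{E}_-^\vee)$. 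Concretely this amounts to matching the connecting map of the restricted $\mathcal{T}or$-sequence with the morphism $\delta$ of Lemma \ref{normal bundle}. Since that class is nonzero, $\mathcal{G}|_s$ is $1$-stable, which finishes the fiberwise verification in (1); and the same identification of the class shows that $\sigma(E)=\mathbb{P}(\mathcal{E}_-^\vee)$ and that $\sigma|_E$ is the stated projection, proving (2). Finally, $M$ is smooth, hence normal, and $\mathcal{G}$ carries the dense open $M\setminus E$ into the component of $M^1(v)$ containing $\Hilb^2(X)\setminus\Hilb^2(D/C)$, so the classifying morphism is dominant onto that component and therefore factors through its normalization, giving $\sigma\colon M\to\widetilde{M}^1(v)$.

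I expect the main obstacle to be the last step: pinning down the elementary-transformation extension class and showing it coincides with the tautological section of $\mathbb{P}(\mathcal{E}_-^\vee)$, equivalently with the map $\delta$ of Lemma \ref{normal bundle}. This is exactly the point where non-splitness — and hence $1$-stability of the new fibers — is forced, and it is precisely the computation carried out in \cite[Proposition 3.7]{fq95}, so I would either adapt that argument or cite it.
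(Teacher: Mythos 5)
Your proposal is correct and follows essentially the same route as the paper: both identify $\mathcal{G}|_{E\times X}$ via the $\mathcal{T}or$ long exact sequence along the Cartier divisor $E\times X$ as a family of extensions $0\to\mathcal{O}_{L_y}(-2)\to G\to I_{L_y}\to 0$, deduce flatness, and delegate the identification of the extension class (hence non-splitness, $1$-stability, and part (2)) to the standard elementary-transformation computation of Friedman and Friedman--Qin. The only differences are cosmetic: you run the $\mathcal{T}or$ computation fiberwise and use constant Hilbert polynomial over the reduced base for flatness, where the paper works with the whole restriction to $E\times X$ and the local criterion, and you are somewhat more explicit than the paper about why the resulting extensions are non-split.
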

\begin{proof}
(1) We get the following commutative diagram: 

\[
\xymatrix{
& & 0 \ar[d] & 0 \ar[d] & & \\
& & \mathcal{K} \ar@{=}[r] \ar[d] 
& \mu_{X}^{*}\mathcal{I}(-E \times X) \ar[d] & & \\
&0 \ar[r] &\mathcal{G} \ar[d] \ar[r] & \mu_{X}^{*}\mathcal{I} \ar[r] \ar[d] & \nu_{X^{*}}\mathcal{O}_{D}(2D) \ar@{=}[d] \ar[r] & 0 \\ 
& 0 \ar[r] & \mathcal{J} \ar[r] \ar[d] 
& \mu_{X}^{*}\mathcal{I}|_{E \times X} 
\ar[r] \ar[d] & \nu_{X}^{*}(\pi^{+})^*\mathcal{O}_{D}(2D) 
\ar[r] & 0 \\
& &0 & 0, & & 
}
\]
where we put 
\[
\mathcal{J}:=
\nu^{*}_{X}(\pi^{+}_{X})^{*}\left(
I_{D/C \times X} 
\otimes \mathcal{O}_{\pi^{+}}(1) 
\right). 
\]

Applying $(-) \otimes^{\dL} \mathcal{O}_{E \times X}$
to the left column in the above diagram, 
we get the following long exact sequence: 
\begin{align*} 
0 \to 
\mathcal{T}or^1\left(
\mathcal{O}_{E \times X}, 
\mathcal{G}
\right)
\to 
\mathcal{T}or^1\left(
\mathcal{O}_{E \times X}, 
\mathcal{J}
\right)
& \to \\
\mu_{X}^{*}\mathcal{I}(-E \times X)|_{E \times X} 
\to \mathcal{G}|_{E \times X} 
\to \mathcal{J}
&\to 0
\end{align*}

Note that we have 
$\mathcal{T}or^1\left(
\mathcal{O}_{E \times X}, 
\mathcal{\mathcal{K}}
\right)=0$ 
since $\mathcal{I}$ is flat over $\Hilb^2(X)$. 
Using the isomorphism 
$\mathcal{O}_{E \times X} 
\cong \left(
\mathcal{O}_{M \times X}(-E \times X) 
\to \mathcal{O}_{M \times X} 
\right)$
in $D^b(M \times X)$, 
we can easily show that 
$\mathcal{T}or^1\left(
\mathcal{O}_{E \times X}, 
\mathcal{G}
\right)=0$ and 
the above long exact sequence 
splits into 
\begin{align*}
&0 \to \mathcal{J} (-E \times X) 
\to \mu_{X}^{*}\mathcal{I}(-E \times X)|_{E \times X} 
\to \left(\nu_{X}^{*}(\pi^{+})^*\mathcal{O}_{D}(2D)\right) (-E \times X) 
\to 0 \\ 
& 0 \to \left(\nu_{X}^{*}(\pi^{+})^*\mathcal{O}_{D}(2D) \right) (-E \times X) 
\to \mathcal{G}|_{E \times X} 
\to \mathcal{J}
\to 0. 
\end{align*}

Hence $\mathcal{G}$ is flat over $M$. 
Furthermore, 
$\mathcal{G}|_{E \times X}$ 
parametrizes 
extensions 
\[
0 \to \mathcal{O}_{L_{y}}(-2) 
\to G 
\to I_{L_{y}} 
\to 0, 
\]
i.e. $1$-stable objects $G$. 
This defines a morphism 
$\sigma \colon M \to M^1(v)$. 
We claim that the image of $\sigma$ 
is contained in a connected component 
$\widetilde{M}^1(v)$. 
Indeed, we have 
$\sigma(M \setminus E) \subset \widetilde{M}^1(v)$ 
by our definition of $\widetilde{M}^1(v)$. 
Since $\sigma(M)$ is connected, it must be contained 
in $\widetilde{M}^1(v)$. 

(2) The assertion follows from 
\cite[Proposition A.2]{fri95}. 
\end{proof}

\begin{cor}
\label{flip}
The scheme $\widetilde{M}^1(v)$ 
is a smooth projective variety. 
Moreover, 
\begin{enumerate}
\item When $n=2$, 
$\Hilb^2(X) \dashrightarrow \widetilde{M}^1(v)$ is a flip. 
\item When $n=3$, 
$\Hilb^2(X) \dashrightarrow \widetilde{M}^1(v)$ is a flop. 
\item When $n \geq 4$, 
$\Hilb^2(X) \dashrightarrow \widetilde{M}^1(v)$ is an anti-flip. 
\end{enumerate}
\end{cor}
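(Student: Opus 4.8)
The plan is to exhibit the smooth projective variety $M=\Bl_{\Hilb^2(D/C)}\Hilb^2(X)$ as a common resolution of both $\Hilb^2(X)$ and $\widetilde{M}^1(v)$, and to read off the type of the birational map from the discrepancies of the two blow-downs. Write $Z_+:=\Hilb^2(D/C)\cong\mathbb{P}(\mathcal{E}_+^\vee)$ and $Z_-:=\sigma(E)\cong\mathbb{P}(\mathcal{E}_-^\vee)$. Recall from Lemma \ref{elementary transform} that $\sigma\colon M\to\widetilde{M}^1(v)$ restricts on $E\cong\mathbb{P}(\mathcal{E}_-^\vee)\times_C\mathbb{P}(\mathcal{E}_+^\vee)$ to the projection $E\to Z_-$, whose fibers are the surfaces $\mathbb{P}^2=\mathbb{P}(\mathcal{E}_+^\vee)|_y$, while $\mu$ restricts to the other projection $E\to Z_+$, whose fibers are the $\mathbb{P}^{n-1}=\mathbb{P}(\mathcal{E}_-^\vee)|_y$.

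First I would prove that $\widetilde{M}^1(v)$ is smooth by showing that $\sigma$ is the blow-up of a smooth center. Since $E$ is the $\mu$-exceptional divisor we have $\mathcal{O}_M(E)|_E=\mathcal{O}_{\mathbb{P}(\mathcal{N})}(-1)$ with $\mathcal{N}=\mathcal{N}_{Z_+/\Hilb^2(X)}$; combining this with the description $\mathcal{N}\cong(\pi^+)^*\mathcal{E}_-\otimes\mathcal{O}_{\pi^+}(-1)$ of Lemma \ref{normal bundle} and the twisting formula for projective bundles, a direct computation shows that the restriction of $\mathcal{O}_M(E)$ to each $\sigma$-fiber $\mathbb{P}^2$ is $\mathcal{O}_{\mathbb{P}^2}(-1)$. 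Hence $E\to Z_-$ is a projective bundle on which $\mathcal{O}_M(E)$ has fibrewise degree $-1$, and the Fujiki--Nakano contraction criterion produces a blow-down $c\colon M\to M'$ of $E$ onto a smooth subvariety with $M'$ smooth. As $\sigma$ is constant on the $c$-fibers it factors as $\sigma=g\circ c$; the induced morphism $g\colon M'\to\widetilde{M}^1(v)$ is proper, birational and bijective, and $\widetilde{M}^1(v)$ is normal, being a normalization, so $g$ is an isomorphism by Zariski's main theorem. Thus $\widetilde{M}^1(v)\cong M'$ is smooth, and it is projective because $M^1(v)$ is. In particular $\sigma$ is the blow-up of $\widetilde{M}^1(v)$ along the smooth subvariety $Z_-$, of codimension $3$, while $\mu$ is the blow-up of $\Hilb^2(X)$ along $Z_+$, of codimension $n$.

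Next I would determine the type of $\Hilb^2(X)\dashrightarrow\widetilde{M}^1(v)$ over $M^{1,2}(v)$. Both $\xi_1^{\pm}$ are small, since their exceptional loci $Z_+$ and $Z_-$ have codimension $n\geq 2$ and $3\geq 2$, so the rational map is an isomorphism in codimension one. Let $\ell_+\subset\mathbb{P}(\mathcal{E}_+^\vee)|_y$ and $\ell_-\subset\mathbb{P}(\mathcal{E}_-^\vee)|_y$ be lines in the fibers contracted by $\xi_1^+$ and $\xi_1^-$, and lift them to curves $\tilde\ell_{\pm}\subset E$ along the two projections, so that $\mu_*\tilde\ell_-=0$, $\sigma_*\tilde\ell_+=0$ and $E\cdot\tilde\ell_{\pm}=-1$ (the value for $\tilde\ell_-$ being standard and that for $\tilde\ell_+$ being the computation of the first step). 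From the discrepancy formulas $K_M=\mu^*K_{\Hilb^2(X)}+(n-1)E$ and $K_M=\sigma^*K_{\widetilde{M}^1(v)}+2E$ we get $\sigma^*K_{\widetilde{M}^1(v)}=\mu^*K_{\Hilb^2(X)}+(n-3)E$, whence, intersecting with $\tilde\ell_-$,
\[
K_{\widetilde{M}^1(v)}\cdot\ell_-=(n-3)\,(E\cdot\tilde\ell_-)=3-n,
\]
while the previous corollary, equivalently adjunction on $Z_+\subset\Hilb^2(X)$, gives $K_{\Hilb^2(X)}\cdot\ell_+=n-3$. These two numbers have opposite signs for $n\neq 3$ and both vanish for $n=3$: for $n=2$ the contraction $\xi_1^+$ is $K$-negative and $\xi_1^-$ is $K$-positive, so the map is a flip; for $n=3$ both are $K$-trivial, so it is a flop; and for $n\geq 4$ the signs are reversed, so it is an anti-flip.

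The main obstacle is the first step, namely verifying cleanly that $\sigma$ really is the blow-up of the smooth center $Z_-$ rather than merely a divisorial contraction. This requires both the fibrewise computation that $\mathcal{O}_M(E)$ has degree $-1$ on the $\mathbb{P}^2$-fibers, where the twist by $\mathcal{O}_{\pi^+}(-1)$ in Lemma \ref{normal bundle} must be tracked carefully through the projective-bundle conventions, and the identification of the abstract Fujiki--Nakano contraction with the moduli-theoretic morphism $\sigma$, for which the normality built into the definition of $\widetilde{M}^1(v)$ is exactly what makes Zariski's main theorem applicable. Once $\sigma$ is known to be a smooth blow-down, the discrepancy bookkeeping in the second step is routine.
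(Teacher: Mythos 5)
Your proposal is correct and follows essentially the same route as the paper: both rest on the two rulings of the exceptional divisor $E \cong \mathbb{P}(\mathcal{E}_{-}^{\vee}) \times_{C} \mathbb{P}(\mathcal{E}_{+}^{\vee})$ coming from Lemma \ref{normal bundle} and Lemma \ref{elementary transform}, and both obtain smoothness of $\widetilde{M}^1(v)$ from the Fujiki--Nakano criterion applied to the second ruling. The only difference is one of explicitness --- you spell out the degree $-1$ check on the $\sigma$-fibers, the identification of the Fujiki--Nakano blow-down with $\sigma$ via normality and Zariski's main theorem, and the discrepancy computation $\sigma^{*}K_{\widetilde{M}^1(v)} = \mu^{*}K_{\Hilb^2(X)} + (n-3)E$ --- where the paper delegates these points to the ``standard flip'' local model in the cited reference.
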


\begin{proof}
Note that Lemma \ref{elementary transform} shows that 
the diagram 
\[
\xymatrix{
& & M \ar[ld]^{\sigma} \ar[rd]_{\mu} & \\ 
& \widetilde{M}^1(v) & & \Hilb^2(X) 
}
\]
is a family version of the standard flip 
(cf. \cite[page 258]{huy06}). 
Since we already know the projectivity 
of $\widetilde{M}^1(v)$, it remains to check 
that $\widetilde{M}^1(v)$ is a smooth variety. 
The assertion then follows from 
the Fujiki-Nakano criterion 
(cf. \cite{fn71} and \cite[page 259]{huy06}). 
\end{proof}


\subsection{$\xi_{0}$ is an extremal contraction}

In this subsection, we describe the 
fiber of $\xi_{0}$ and show that 
it is the contraction of a $K$-negative extremal ray. 
Recall from Corollary \ref{fiber of xi1} (3) that 
we have 
\[
\widetilde{M}^1(v)=
\left(
\Hilb^2(X) \setminus \Hilb^2(D/C)
\right) 
\coprod 
\mathbb{P}\left(
\mathcal{E}_{-}^{\vee}
\right). 
\]

We start with the following lemma: 

\begin{lem}
Let $G \in \mathbb{P}(\mathcal{E}_{-}^{\vee}) \subset \widetilde{M}^1(v)$. 
Then $G$ is not $0$-stable. 
\end{lem}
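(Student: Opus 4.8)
The plan is to show directly that $G \notin \Per(X/Y)$, which by Definition \ref{m-stability} immediately implies that $G$ is not $0$-stable. To detect the failure of the perversity condition I will invoke Lemma \ref{criterion for per}: it suffices to exhibit a single point $y \in C$ for which $\Hom(G, \mathcal{O}_{L_y}(-1)) \neq 0$. By Corollary \ref{fiber of xi1} (3), the sheaf $G \in \mathbb{P}(\mathcal{E}_{-}^{\vee})$ sits in a non-trivial extension
\[
0 \to \mathcal{O}_{L_y}(-2) \to G \to I_{L_y} \to 0
\]
for a suitable $y \in C$, and this is precisely the $y$ I will test against.

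First I would apply the contravariant functor $\Hom(-, \mathcal{O}_{L_y}(-1))$ to the extension above. The relevant portion of the long exact sequence is
\[
\Hom(G, \mathcal{O}_{L_y}(-1)) \to \Hom(\mathcal{O}_{L_y}(-2), \mathcal{O}_{L_y}(-1)) \xrightarrow{\ \delta\ } \Ext^1(I_{L_y}, \mathcal{O}_{L_y}(-1)),
\]
and by exactness the image of the left-hand map is exactly $\ker \delta$. Hence it is enough to verify $\ker \delta \neq 0$, for which a dimension count suffices. Since $\mathcal{O}_{L_y}(-2)$ and $\mathcal{O}_{L_y}(-1)$ are line bundles supported on $L_y \cong \mathbb{P}^1$ and pushforward along the closed immersion $L_y \hookrightarrow X$ is fully faithful, I get $\hom(\mathcal{O}_{L_y}(-2), \mathcal{O}_{L_y}(-1)) = h^0(\mathbb{P}^1, \mathcal{O}(1)) = 2$.

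The remaining input is $\ext^1(I_{L_y}, \mathcal{O}_{L_y}(-1)) = 1$, which I expect to be the only step requiring genuine computation, and the main obstacle. I would compute it in exact parallel with Lemma \ref{dim of fiber}: applying $\Hom(-, \mathcal{O}_{L_y}(-1))$ to $0 \to I_{L_y} \to \mathcal{O}_X \to \mathcal{O}_{L_y} \to 0$ and using $\Ext^i(\mathcal{O}_X, \mathcal{O}_{L_y}(-1)) = H^i(\mathbb{P}^1, \mathcal{O}(-1)) = 0$ for all $i$ yields $\Ext^1(I_{L_y}, \mathcal{O}_{L_y}(-1)) \cong \Ext^2(\mathcal{O}_{L_y}, \mathcal{O}_{L_y}(-1))$. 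The latter I would evaluate with the local-to-global spectral sequence together with $\mathcal{E}xt^q(\mathcal{O}_{L_y}, \mathcal{O}_{L_y}(-1)) \cong \bigwedge^q \mathcal{N}_{L_y/X} \otimes \mathcal{O}_{L_y}(-1)$ and the splitting $\mathcal{N}_{L_y/X} \cong \mathcal{O}_{L_y}^{\oplus(n-2)} \oplus \mathcal{O}_{L_y}(-1)$ recorded in Lemma \ref{dim of fiber}; only the $H^1(\mathbb{P}^1, \mathcal{O}(-2))$ summand survives, giving $\ext^2(\mathcal{O}_{L_y}, \mathcal{O}_{L_y}(-1)) = 1$.

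Granting these two computations, $\delta$ is a linear map from a $2$-dimensional vector space to a $1$-dimensional one, so $\ker \delta \neq 0$; therefore $\Hom(G, \mathcal{O}_{L_y}(-1)) \neq 0$. By Lemma \ref{criterion for per} this forces $G \notin \Per(X/Y)$, and hence $G$ is not $0$-stable, as claimed. As a consistency check one finds $\Hom(I_{L_y}, \mathcal{O}_{L_y}(-1)) \cong \Ext^1(\mathcal{O}_{L_y}, \mathcal{O}_{L_y}(-1)) = 0$ by the same spectral sequence, so in fact $\Hom(G, \mathcal{O}_{L_y}(-1)) \cong \ker \delta$.
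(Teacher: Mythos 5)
Your proof is correct and is essentially the paper's own argument: both apply $\Hom(-,\mathcal{O}_{L_{y}}(-1))$ to the extension $0 \to \mathcal{O}_{L_{y}}(-2) \to G \to I_{L_{y}} \to 0$, compute $\ext^1(I_{L_{y}},\mathcal{O}_{L_{y}}(-1))=1$ via the same local-to-global spectral sequence as in Lemma \ref{dim of fiber}, and conclude $\Hom(G,\mathcal{O}_{L_{y}}(-1))\neq 0$ so that $G \notin \Per(X/Y)$ by Lemma \ref{criterion for per}. If anything, your dimension count $2>1$ forcing $\ker\delta\neq 0$ is slightly more careful than the paper's bare assertion that $\hom(G,\mathcal{O}_{L_{y}}(-1))=1$, which would additionally require surjectivity of the connecting map.
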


\begin{proof}
We have an exact sequence 
\[
0 \to \mathcal{O}_{L_{y}}(-2) \to G \to I_{L_{y}} \to 0
\]
for some $y \in C$. 
Applying $\Hom(-, \mathcal{O}_{L_{y}}(-1))$, 
we get 
\begin{align*}
&\Hom\left(
I_{L_{y}}, \mathcal{O}_{L_{y}}(-1)
\right) 
\to \Hom\left(
G, \mathcal{O}_{L_{y}}(-1)
\right) 
\to \Hom\left(
\mathcal{O}_{L_{y}}(-2), \mathcal{O}_{L_{y}}(-1)
\right) \\ 
\to 
&\Ext^1\left(
I_{L_{y}}, \mathcal{O}_{L_{y}}(-1)
\right)
\end{align*}

Since $I_{L_{y}} \in \Per(X/Y)$, 
$\Hom\left(
I_{L_{y}}, \mathcal{O}_{L_{y}}(-1)
\right)=0$. 
Moreover, as in Lemma \ref{dim of fiber}, 
we can show that 
$\ext^1\left(
I_{L_{y}}, \mathcal{O}_{L_{y}}(-1)
\right)=1$. 
Hence 
$\hom\left(
G, \mathcal{O}_{L_{y}}(-1)
\right)=1 \neq 0$. 
This shows that $G \notin \Per(X/Y)$. 
\end{proof}

By the above lemma and 
Lemma \ref{stability of ideal sheaf}, 
the exceptional locus of $\xi_{0}$ is 
\[
\Exc(\xi_{0})=
\left(
\left\{
I_{Z} \in \Hilb^2(X) : 
Z \cap D \neq \emptyset 
\right\}
\setminus 
\Hilb^2(D/C)
\right) 
\coprod 
\mathbb{P}\left(
\mathcal{E}_{-}^{\vee}
\right). 
\]

To study the geometry of $\xi_{0}$, 
we introduce the following filtration 
$B_{1} \subset B_{2} \subset B_{3}=B \subset \Hilb^2(Y)$ 
of $\Hilb^2(Y)$: 
\begin{itemize}
\item $B_{1}:=\left\{
I_{W} \in \Hilb^2(C) : 
\Supp(I_{W})=\pt
\right\}$, 

\item $B_{2}:=\Hilb^2(C)$, 

\item $B_{3}:=\left\{
I_{W} \in \Hilb^2(Y) : 
W \cap C \neq \emptyset
\right\}$. 
\end{itemize}

\begin{lem}
\label{determine fiber}
\begin{enumerate}
\item We have $\xi_{0}(\Exc(\xi_{0}))=B$. 

\item For $I_{W} \in B \setminus B_{2}$, 
the fibre of $\xi_{0}$ is 
$\xi_{0}^{-1}(I_{W})=\mathbb{P}^1$. 

\item For $I_{W} \in B_{2} \setminus B_{1}$, 
the fibre of $\xi_{0}$ is 
$\xi_{0}^{-1}(I_{W})=\mathbb{P}^1 \times \mathbb{P}^1$. 

\item For $I_{W} \in B_{1}$, 
the fibre of $\xi_{0}$ is 
the weighted projective plane 
$\xi_{0}^{-1}(I_{W})=\mathbb{P}(1, 1, 2)$. 

\end{enumerate}

Here, we consider the image and the fibers of $\xi_{0}$ 
with its reduced scheme structures. 
\end{lem}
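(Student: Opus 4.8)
The plan is to read $\xi_{0}=f_{*}$ off the two strata of
$\widetilde{M}^1(v)=\bigl(\Hilb^2(X)\setminus\Hilb^2(D/C)\bigr)\coprod\mathbb{P}(\mathcal{E}_{-}^{\vee})$
separately, computing $f_{*}$ on honest ideal sheaves $I_{Z}$ and on the extension sheaves $G$. The basic local fact is that for a length two $Z$ meeting $D$ one has $f_{*}I_{Z}=I_{f(Z)}$ with $\dR^1f_{*}I_{Z}=0$, and that a reduced point $p\in L_{y}$ of $Z$ contributes only its image $y\in C$: since $f_{*}\mathcal{O}_{p}=\mathcal{O}_{y}$, the position of $p$ inside the fibre $L_{y}\cong\mathbb{P}^1$ is forgotten. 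For $G\in\mathbb{P}(\mathcal{E}_{-}^{\vee})$ over $y$, pushing the defining sequence $0\to\mathcal{O}_{L_{y}}(-2)\to G\to I_{L_{y}}\to 0$ down gives $f_{*}G=I_{W}$ with $W$ a fat point supported at $y$. Both computations land in $\{W:W\cap C\neq\emptyset\}=B$, and nonemptiness of the fibres below yields the reverse inclusion, proving (1). Only the reduced structures are needed.

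For the two middle strata the fibres follow from this forgetting principle. If $I_{W}\in B\setminus B_{2}$ with $W=\{y_{1},y_{2}\}$, $y_{1}\in C$, $y_{2}\notin C$, the point over $y_{2}$ is rigid while the point over $y_{1}$ sweeps $L_{y_{1}}\cong\mathbb{P}^1$, giving $\xi_{0}^{-1}(I_{W})\cong\mathbb{P}^1$; the limiting transverse fat point is treated uniformly in the next paragraph. If $I_{W}\in B_{2}\setminus B_{1}$ with $W=\{y_{1},y_{2}\}\subset C$ and $y_{1}\neq y_{2}$, then any $Z=\{p_{1},p_{2}\}$ with $p_{i}\in L_{y_{i}}$ automatically avoids $\Hilb^2(D/C)$, and all such map to the same $I_{W}$, while no extension sheaf or fat point can contribute; hence $\xi_{0}^{-1}(I_{W})\cong L_{y_{1}}\times L_{y_{2}}\cong\mathbb{P}^1\times\mathbb{P}^1$. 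This settles (2) and (3).

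To pin down the extension-sheaf contributions I would show that $f_{*}$ induces an isomorphism $\mathbb{P}(\mathcal{E}_{-}^{\vee})_{y}\xrightarrow{\ \sim\ }\mathbb{P}(T_{y}Y)$ sending $G$ to the tangent direction of the length two scheme $f_{*}G$ at $y$, matching $\ext^1(I_{L_{y}},\mathcal{O}_{L_{y}}(-2))=n$ of Lemma \ref{dim of fiber} against $\dim T_{y}Y=n$. Consequently over each $I_{W}$ supported at a single point $y$ there is exactly one extension sheaf $G_{t}$ in the fibre, namely the one whose tangent equals that of $W$; for the transverse fat points in $B\setminus B_{2}$ this is precisely the point completing the affine line of the ideal-sheaf part to $\mathbb{P}^1$, keeping stratum (2) uniform.

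The substantive case is $I_{W}\in B_{1}$, where $W=(y,t)$ with $t\in T_{y}C$ and the fibre must be identified with $\mathbb{P}(1,1,2)$. Since $t\in\operatorname{im}(df_{p})$ for every $p\in L_{y}$, the ideal-sheaf part of the fibre is the $\mathbb{P}^1$-bundle $\mathbb{P}(\mathcal{V})$ over $L_{y}$ attached to $\mathcal{V}:=df^{-1}(\langle t\rangle)\subset TX|_{L_{y}}$; the sequence $0\to T_{L_{y}}\to\mathcal{V}\to\mathcal{O}_{L_{y}}\to 0$ with $T_{L_{y}}\cong\mathcal{O}(2)$ splits (as $H^1(\mathbb{P}^1,\mathcal{O}(2))=0$), so $\mathbb{P}(\mathcal{V})\cong\mathbb{F}_{2}$ and the vertical-tangent section is a $(-2)$-curve. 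This section is the punctual conic inside $\Hilb^2(L_{y})\cong\mathbb{P}^2$, which lies in $\Hilb^2(D/C)$; passing to $\widetilde{M}^1(v)$ through the elementary transformation of Lemma \ref{elementary transform} replaces it, and the plan is to show it collapses to the single extension sheaf $G_{t}$, yielding $\mathbb{F}_{2}$ with its $(-2)$-curve blown down, i.e. the quadric cone $\mathbb{P}(1,1,2)$. The main obstacle is exactly this contraction: one must verify that $\sigma$ sends the whole conic to one point of $\mathbb{P}(\mathcal{E}_{-}^{\vee})_{y}$ — equivalently that the normal direction of $\mathbb{F}_{2}$ along it is constant — and confirm the self-intersection is $-2$. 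This is the delicate geometry; the remaining bundle and cohomology bookkeeping is routine.
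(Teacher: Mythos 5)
Your strategy coincides with the paper's: stratify $B$, compute the ideal-sheaf part of each fibre by tracking tangent directions of punctual subschemes under $df$, identify the extension-sheaf contribution via an isomorphism $\mathbb{P}(\mathcal{E}_{-}^{\vee})_{y}\cong\mathbb{P}\Ext^1(\mathcal{O}_{y},\mathcal{O}_{y})^{\vee}$, and in case (4) realize the fibre as $\mathbb{F}_{2}$ with its $(-2)$-section contracted. But two steps are left open. First, ``matching $\ext^1(I_{L_{y}},\mathcal{O}_{L_{y}}(-2))=n$ against $\dim T_{y}Y=n$'' is not enough to produce the isomorphism of your third paragraph: equal dimensions do not make a map bijective. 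The paper constructs the map explicitly as the connecting homomorphism $\alpha\colon\Ext^1(I_{L_{y}},\mathcal{O}_{L_{y}}(-2))\to\Ext^1(f^{*}I_{y},\mathcal{O}_{L_{y}}(-2))\cong\Ext^1(\mathcal{O}_{y},\mathcal{O}_{y})$ attached to $0\to\mathcal{O}_{L_{y}}(-1)\to f^{*}I_{y}\to I_{L_{y}}\to 0$, proves injectivity from the vanishing $\Ext^1(\mathcal{O}_{L_{y}}(-1),\mathcal{O}_{L_{y}}(-2))=0$, and only then invokes the dimension count. Without this, the uniqueness of the extension sheaf over each punctual $I_{W}$ --- which you use in every stratum --- is unproved.

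Second, the step you flag as ``the main obstacle'' in case (4) requires no computation of normal directions at all, and the constancy statement you propose to verify is not the efficient route. Once you have (i) exactly one extension sheaf $G_{t}$ in $\xi_{0}^{-1}(I_{W})\cap\mathbb{P}(\mathcal{E}_{-}^{\vee})_{y}$ (the bijectivity of $\alpha$ above) and (ii) the fact that $\sigma|_{E}$ is the projection $\mathbb{P}(\mathcal{E}_{-}^{\vee})\times_{C}\mathbb{P}(\mathcal{E}_{+}^{\vee})\to\mathbb{P}(\mathcal{E}_{-}^{\vee})$ (Lemma \ref{elementary transform}(2)), then $\sigma(\mu_{*}^{-1}(b))$ lands in $\xi_{0}^{-1}(I_{W})\cap\mathbb{P}(\mathcal{E}_{-}^{\vee})_{y}=\{G_{t}\}$ because $\xi_{0}\circ\sigma$ agrees with $f_{*}\circ\mu$ on a dense open set and hence everywhere; so $b$ is automatically contracted and $\xi_{0}^{-1}(I_{W})\cong\mathbb{P}(1,1,2)$. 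A smaller omission of the same kind occurs in stratum (2): for a non-reduced transverse $W$ you assert the ideal-sheaf part is an affine line, whereas the paper's tangent-map diagram shows that only the single point $x=[\mathbb{C}\cdot\beta^{\vee}]\in L_{y}$ supports such $Z$ (since $\beta$ must lie in $\psi_{x}(N_{D/X}(x))$), the admissible directions over it form $\mathbb{P}^1\setminus\mathbb{P}^0=\mathbb{A}^1$, and the resulting set $\mathbb{A}^1\coprod\{G_{W}\}$ is identified with $\mathbb{P}^1$ via $H^1(\mathcal{O}_{\xi_{0}^{-1}(I_{W})})=0$. With these three points supplied, your argument closes and matches the paper's.
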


\begin{proof}
(1) First let 
$I_{Z} \in \Exc(\xi_{0}) \cap \Hilb^2(X)$. 
Then pushing forward the exact sequence 
\[
0 \to I_{Z} \to \mathcal{O}_{X} \to \mathcal{O}_{Z} \to 0, 
\]
we get 
\[
0 \to f_{*}I_{Z} \to \mathcal{O}_{Y} \to f_{*}\mathcal{O}_{Z} \to 0. 
\]

Hence $f_{*}I_{Z}$ is an ideal sheaf of 
length $2$ closed subscheme $W$ of Y 
with $\Supp W=f(Z)$. 
Since $Z \cap D \neq \emptyset$, 
$W \cap C \neq \emptyset$, i.e. 
$f_{*}I_{Z} \in B$. 

Next take $G \in \mathbb{P}(\mathcal{E}_{-}^{\vee})$. 
We can easily check that 
the following natural map 
$\alpha \colon 
\Ext^1\left(
I_{L_{y}}, \mathcal{O}_{L_{y}}(-2)
\right) 
\to 
\Ext^1\left(
\mathcal{O}_{y}, \mathcal{O}_{y}
\right)$ 
determines the class of $f_{*}G \in \Hilb^2(Y)$: 
Consider the short exact sequence 
\[
0 \to \mathcal{O}_{L_{y}}(-1) \to f^{*}I_{y} \to I_{L_{y}} \to 0. 
\]
Applying $\Hom\left(
-, \mathcal{O}_{L_{y}}(-2)
\right)$, 
we get 
\begin{align*}
\alpha \colon 
\Ext^1\left(
I_{L_{y}}, \mathcal{O}_{L_{y}}(-2) 
\right) 
&\to 
\Ext^1\left(
f^{*}I_{y}, \mathcal{O}_{L_{y}}(-2) 
\right) \\
&\cong 
\Hom\left(
I_{y}, \dR f_{*}\mathcal{O}_{L_{y}}(-2)[1]
\right) \\
&\cong 
\Hom\left(
I_{y}, \mathcal{O}_{y}
\right) \\ 
&\cong 
\Ext^1\left(
\mathcal{O}_{y}, \mathcal{O}_{y}
\right). 
\end{align*}

Hence $f_{*}G \in B_{1}$. 
Furthermore, we have 
$\Ext^1\left(
\mathcal{O}_{L_{y}}(-1), \mathcal{O}_{L_{y}}(-2)
\right)=0$ 
as in Lemma \ref{dim of fiber} 
and hence $\alpha$ is injective. 
Since we know 
$\ext^1\left(
I_{L_{y}}, \mathcal{O}_{L_{y}}(-2)
\right) 
= 
\ext^1\left(
\mathcal{O}_{y}, \mathcal{O}_{y}
\right)=n$, 
$\alpha$ is bijective. 
We conclude that 
\[
\xi_{0}|_{\mathbb{P}\Ext^1\left(
I_{L_{y}}, \mathcal{O}_{L_{y}}(-2)
\right)^{\vee}} 
\colon 
\mathbb{P}\Ext^1\left(
I_{L_{y}}, \mathcal{O}_{L_{y}}(-2)
\right)^{\vee} 
\cong 
\mathbb{P}\Ext^1\left(
\mathcal{O}_{y}, \mathcal{O}_{y}
\right)^{\vee}
\]
is an isomorphism. 

(2) Take an element 
$I_{W} \in B \setminus B_{2}$. 
First assume that 
$W=\{
a, b
\}$, 
$a \in C, b \notin C$. 
Then from the argument of (1), 
we have 
\[
\xi_{0}^{-1}(I_{W})
=\left\{
I_{p, q} \in \Hilb^2(X) : 
f(p)=a, f(q)=b
\right\}
\cong L_{a}. 
\]

Next assume that $\Supp W=\{y\}$, 
but scheme-theoretically $W \nsubseteq C$. 
Let $x \in D$ with $f(x)=y$. 
Then the following commutative diagram 
of the tangent maps determines 
the morphism 
\[
\xi_{0} \colon 
\mathbb{P}\Ext^1_{X}\left(
\mathcal{O}_{x}, \mathcal{O}_{x}
\right)^{\vee} 
\setminus 
\mathbb{P}\Ext^1_{L_{y}}\left(
\mathcal{O}_{x}, \mathcal{O}_{x}
\right)^{\vee} 
\to 
\mathbb{P}\Ext^1_{Y}\left(
\mathcal{O}_{y}, \mathcal{O}_{y}
\right)^{\vee}. 
\] 
\begin{equation}
\label{tangent map}
\xymatrix{
& &0 \ar[d] & & & \\
& &T_{x}L_{y} \ar[d] & &0 \ar[d] & \\
&0 \ar[r] &T_{x}D \ar[r] \ar[d]^{\eta_{x}} & T_{x}X \ar[r] \ar[d]^{\phi_{x}} &N_{D/X}(x) \ar[r] \ar[d]^{\psi_{x}} &0 \\
&0 \ar[r] ]&T_{y}C \ar[r] \ar[d] &T_{y}Y \ar[r] &N_{C/Y}(y) \ar[r] &0 \\
& &0, & & & 
}
\end{equation}
where maps $\eta_{x}, \phi_{x}, \psi_{x}$ 
are the tangent maps. 
Note that the point 
$
[\psi_{x}\left(N_{D/X}(x)\right)^{\vee}]
\in 
\mathbb{P}(N_{C/Y}(y)^{\vee}) 
=L_{y}
$
is nothing but $x \in L_{y}$. 
Now take the $1$-dimensional subspace 
$
\mathbb{C} \cdot (\alpha, \beta) 
\subset 
T_{y}Y=T_{y}C \oplus N_{C/Y}(y)
$
which corresponds to $I_{W}$. 
Since we assume that 
$W \nsubseteq C$, 
we have 
$0 \neq \beta \in N_{C/Y}(y)$. 
Let 
$x:=[\left(
\mathbb{C} \cdot \beta^{\vee}
\right)
] \in L_{y}$. 
Then we have 
$\phi_{x}^{-1}\left(
\mathbb{C} \cdot (\alpha, \beta) 
\right)
=T_{x}L_{y} \oplus N_{D/X}(x)
$. 
Moreover, recall from (1) that 
we have the unique element 
$G_{W} \in \mathbb{P}\Ext^1\left(
I_{L_{y}}, \mathcal{O}_{L_{y}}(-2)
\right)^{\vee}$ 
such that $f_{*}G_{W}=I_{W}$. 
Hence we conclude that 
\begin{align*}
\xi_{0}^{-1}(I_{W})
&=\left(
\mathbb{P}\left(
T_{x}L_{y} \oplus N_{D/X}(x) 
\right)^{\vee}
\setminus 
\mathbb{P}\left(
T_{x}L_{y}
\right)^{\vee}
\right) 
\coprod 
\{
G_{W}
\} \\ 
&= 
\mathbb{A}^1 
\coprod 
\pt. 
\end{align*}

Since both 
$\widetilde{M}^1(v)$ 
and $\Hilb^2(Y)$ 
are smooth, 
$H^1(\mathcal{O}_{\xi_{0}^{-1}(I_{W})})=0$. 
Hence we must have 
$\xi_{0}^{-1}(I_{W})\cong \mathbb{P}^1$. 

(3) Take an element 
$I_{W} \in B_{2} \setminus B_{1}$. 
Then by definition, 
$W=\{a, b\}$ with 
$a, b \in C$, 
$a \neq b$. 
Hence we have 
\[
\xi_{0}^{-1}(I_{W})
=\left\{
I_{p, q} : 
f(p)=a, f(q)=b
\right\}
=L_{a} \times L_{b}
\cong \mathbb{P}^1 \times \mathbb{P}^1. 
\]

(4) Let $I_{W} \in B_{1}$, 
$\Supp W=\{y\} \subset C$. 
Take the subspace 
$\mathbb{C} \cdot \alpha 
\subset T_{y}Y$ 
corresponding to 
$I_{W}$. 
Since $W \subset C$ as scheme, 
we have 
$\mathbb{C} \cdot \alpha \subset T_{y}C$. 
For each $x \in L_{y}$, 
we have 
$\phi_{x}^{-1}(\mathbb{C} \cdot \alpha)
=\mathbb{C} \cdot \alpha \oplus T_{x}L_{y}$. 
If we change $x \in L_{y}$, 
the vector space $T_{x}L_{y}$ changes, but 
the subspace $\mathbb{C} \cdot \alpha$ 
does not change. 
As before, we also have 
the unique element 
$G_{W} \in \mathbb{P}\Ext^1\left(
I_{L_{y}}, \mathcal{O}_{L_{y}}(-2)
\right)^{\vee}$ 
such that 
$f_{*}G_{W}=I_{W}$. 
Hence we conclude that 
\begin{align*}
\xi_{0}^{-1}(I_{W})
&=
\left(
\mathbb{P}\left(
\left(
\mathcal{O}_{L_{y}} 
\oplus 
\mathcal{T}_{L_{y}}
\right)^{\vee} 
\right) 
\setminus 
\mathbb{P}\left(
\mathcal{T}_{L_{y}}^{\vee}
\right)
\right)
\coprod 
\{
G_{W}
\} \\
&\cong 
\left(
\mathbb{P}\left(
\mathcal{O}_{L_{y}} \oplus \mathcal{O}_{L_{y}}(-2) 
\right)
\setminus 
\mathbb{P}\left(
\mathcal{O}_{L_{y}}(-2)
\right)
\right) 
\coprod 
\{
G_{W}
\}. 
\end{align*}

By this description, 
we can see that 
$\xi_{0}^{-1}(I_{W})$ 
is the proper transform of 
$S:=\mathbb{P}\left(
\mathcal{O}_{L_{y}} \oplus \mathcal{O}_{L_{y}}(-2) 
\right)$ 
via the birational map 
$\widetilde{M}^1(v) \dashrightarrow \Hilb^2(X)$. 
To determine the scheme structure of 
$\xi_{0}^{-1}(I_{W})$, 
recall the diagram 
\[
\xymatrix{
& & M \ar[ld]^{\sigma} \ar[rd]_{\mu} & \\ 
& \widetilde{M}^1(v) & & \Hilb^2(X). 
}
\]

Since 
$S \cap \Hilb^2(D/C)
=\mathbb{P}\left(
\mathcal{O}_{L_{y}}(-2)
\right)=:b$, 
the proper transform 
of $S$ by $\mu$ is 
$\mu_{*}^{-1}(S) \cong S$. 
Then the morphism 
$\sigma|_{S} \colon S \cong \mu_{*}^{-1}(S) 
\to \xi_{0}^{-1}(I_{W})$ is 
nothing but the contraction of 
a $(-2)$-curve $b \subset S$ 
and hence we get 
$\xi_{0}^{-1}(I_{W}) \cong \mathbb{P}(1, ,1, 2)$ 
as required. 
\end{proof}

\begin{lem}
\label{relative pic}
The relative Picard number 
$\rho\left(
\widetilde{M}^1(v)/\Hilb^2(Y)
\right)$
is one. 
\end{lem}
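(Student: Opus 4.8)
The plan is to exhibit $\xi_{0}\colon \widetilde{M}^1(v)\to \Hilb^2(Y)$ as a birational morphism between smooth projective varieties that contracts exactly one prime divisor, and then to invoke the standard fact that for a birational projective morphism between smooth (hence $\mathbb{Q}$-factorial) projective varieties the relative Picard number equals the number of exceptional prime divisors. First I would check birationality: since $f\colon X\setminus D\to Y\setminus C$ is an isomorphism and $\xi_{0}=f_{*}$, Lemma \ref{stability of ideal sheaf} (1) shows that over the open set $\{I_{W}:W\cap C=\emptyset\}=\Hilb^2(Y)\setminus B$ the morphism $\xi_{0}$ restricts to the isomorphism $\Hilb^2(X\setminus D)\cong\Hilb^2(Y\setminus C)$. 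As $\widetilde{M}^1(v)$ and $\Hilb^2(Y)$ are smooth, projective and irreducible, $\xi_{0}$ is a birational contraction; in particular $\xi_{0*}\mathcal{O}_{\widetilde{M}^1(v)}=\mathcal{O}_{\Hilb^2(Y)}$ and $\xi_{0}^{*}\colon N^1(\Hilb^2(Y))\to N^1(\widetilde{M}^1(v))$ is injective.

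Next I would pin down the exceptional locus using the description of $\Exc(\xi_{0})$ established above, namely that it is the union of (the closure of) $\{I_{Z}\in\Hilb^2(X):Z\cap D\neq\emptyset\}\setminus\Hilb^2(D/C)$ and $\mathbb{P}(\mathcal{E}_{-}^{\vee})$. The first piece is the strict transform of the locus $\{I_{Z}:Z\cap D\neq\emptyset\}\subset\Hilb^2(X)$, which is the image of the irreducible incidence variety parametrising a point of $D$ together with a second point of $X$; it is therefore irreducible of dimension $(n-1)+n=2n-1$, i.e. a prime divisor in $\Hilb^2(X)$, and its strict transform is a prime divisor in $\widetilde{M}^1(v)$. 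The second piece $\mathbb{P}(\mathcal{E}_{-}^{\vee})$ is a $\mathbb{P}^{n-1}$-bundle over $C$ by Lemma \ref{dim of fiber}, hence has dimension $(n-2)+(n-1)=2n-3$, i.e. codimension $3$ in $\widetilde{M}^1(v)$. Thus $\xi_{0}$ contracts exactly one prime divisor.

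Finally, since $\Hilb^2(Y)$ is smooth, it is $\mathbb{Q}$-factorial, so for any divisor class $\delta$ on $\widetilde{M}^1(v)$ the class $\delta-\xi_{0}^{*}\xi_{0*}\delta$ is $\xi_{0}$-exceptional and hence a combination of the exceptional prime divisors; as there is only one of them, $N^1(\widetilde{M}^1(v))/\xi_{0}^{*}N^1(\Hilb^2(Y))$ is spanned by a single class, giving $\rho(\widetilde{M}^1(v)/\Hilb^2(Y))\leq 1$. Since $\xi_{0}$ contracts a divisor this quotient is nonzero, so the relative Picard number is exactly $1$. The main technical point to be careful about is precisely the irreducibility and codimension bookkeeping of $\Exc(\xi_{0})$ (that the contracted locus is one prime divisor together with a piece of codimension $\geq 2$), which is exactly what the fibre analysis of Lemma \ref{determine fiber} provides. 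As an independent cross-check one can instead compute the difference of absolute Picard numbers: the anti-flip $\Hilb^2(X)\dashrightarrow\widetilde{M}^1(v)$ of Corollary \ref{flip} is an isomorphism in codimension one, so $\rho(\widetilde{M}^1(v))=\rho(\Hilb^2(X))$; using $\rho(\Hilb^2 Z)=\rho(Z)+1$ when $H^1(\mathcal{O}_{Z})=0$ together with $\rho(X)=\rho(Y)+1$ gives $\rho(\widetilde{M}^1(v))=\rho(Y)+2$ and $\rho(\Hilb^2(Y))=\rho(Y)+1$, whose difference is again $1$.
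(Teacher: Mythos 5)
Your argument is correct and arrives at the same reduction as the paper --- namely that $\xi_{0}$ is a birational morphism between smooth (hence $\mathbb{Q}$-factorial) projective varieties contracting exactly one prime divisor, from which $\rho(\widetilde{M}^1(v)/\Hilb^2(Y))=1$ follows --- but you handle the one genuinely nontrivial point, the irreducibility of the divisorial part of $\Exc(\xi_{0})$, by a different route. The paper observes via Lemma \ref{determine fiber} that the unique codimension-one component of $\Exc(\xi_{0})$ is the closure of $\xi_{0}^{-1}(B\setminus B_{2})$, a $\mathbb{P}^1$-fibration over $B\setminus B_{2}$, and so reduces to the irreducibility of $B\subset\Hilb^2(Y)$, which it proves by writing local equations for $B$ inside $\Hilb^2(Y)=\Bl_{\Delta_{Y}}\Sym^2(Y)$ and applying the Jacobian criterion to conclude that $B$ is smooth. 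You instead work upstairs: the locus $\overline{\{I_{Z}:Z\cap D\neq\emptyset\}}\subset\Hilb^2(X)$ is the closure of the image of the irreducible incidence variety $D\times X\dashrightarrow\Hilb^2(X)$, hence a prime divisor of dimension $2n-1$, and its strict transform is the unique exceptional prime divisor because $\mathbb{P}(\mathcal{E}_{-}^{\vee})$ has codimension three. Both routes are sound; yours avoids the explicit local computation, while the paper's argument yields the slightly stronger statement that $B$ is smooth. Your closing cross-check via absolute Picard numbers is a nice independent confirmation, but note that it quietly invokes $\rho(\Hilb^2 Z)=\rho(Z)+1$, which itself needs the identification $\Hilb^2 Z=\Bl_{\Delta}(Z\times Z)/(\mathbb{Z}/2)$ together with $H^1(\mathcal{O}_{Z})=0$, so it is not really more elementary than the direct argument.
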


\begin{proof}
By Lemma \ref{determine fiber}, 
the only codimension $1$ irreducible 
component in $\Exc(\xi_{0})$ is 
the closure of 
$\xi_{0}^{-1}\left(
B \setminus B_{2}
\right)$. 
Hence it is enough to show that 
$B$ is irreducible. 
To see that, we may assume 
$Y=\mathbb{C}^n
=\Spec\mathbb{C}[x_{1}, \cdots, x_{n}]$, 
$C=(x_{1}=x_{2}=0)$. 
Then we can write down the local equations 
of $B \subset \Hilb^2(Y)$ by using the fact that 
$\Hilb^2(Y)=\Bl_{\Delta_{Y}}\Sym^2(Y)$. 
The Jacobian criterion then 
shows that 
$B$ is smooth. 
In particular, it is irreducible. 
\end{proof}

\begin{cor}
\label{extremal contraction}
The birational morphism $\xi_{0}$ 
is the contraction of a 
$K$-negative extremal ray. 
\end{cor}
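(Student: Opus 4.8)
The plan is to verify the three defining properties of a $K$-negative extremal contraction for $\xi_{0}$: that it is a birational morphism with $\xi_{0*}\mathcal{O}_{\widetilde{M}^1(v)}=\mathcal{O}_{\Hilb^2(Y)}$, that $\rho(\widetilde{M}^1(v)/\Hilb^2(Y))=1$, and that $K_{\widetilde{M}^1(v)}$ is negative on the contracted curves. The second point is exactly Lemma \ref{relative pic}, and $\widetilde{M}^1(v)$ is a smooth projective variety by Corollary \ref{flip}, so the real work lies in the first and third points.

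First I would check that $\xi_{0}$ is a birational contraction. Over the open locus $\Hilb^2(Y)\setminus B$ the map $f$ restricts to the isomorphism $X\setminus D\cong Y\setminus C$, so $I_Z\mapsto f_{*}I_Z$ identifies $\{I_Z : Z\cap D=\emptyset\}$ with $\Hilb^2(Y)\setminus B$; hence $\xi_{0}$ is birational and is an isomorphism away from $B$ (compare Lemma \ref{determine fiber} (1)). Every fibre of $\xi_{0}$ is connected: over $\Hilb^2(Y)\setminus B$ it is a point, and over $B$ it is one of $\mathbb{P}^1$, $\mathbb{P}^1\times\mathbb{P}^1$, or $\mathbb{P}(1,1,2)$ by Lemma \ref{determine fiber}. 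Since $\Hilb^2(Y)$ is smooth, hence normal, the Stein factorization of the proper birational map $\xi_{0}$ is an isomorphism onto $\Hilb^2(Y)$ by Zariski's main theorem, so $\xi_{0*}\mathcal{O}_{\widetilde{M}^1(v)}=\mathcal{O}_{\Hilb^2(Y)}$ and $\xi_{0}$ is a contraction.

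For $K$-negativity, since $\rho(\widetilde{M}^1(v)/\Hilb^2(Y))=1$ the relative cone of curves is a single ray, so it suffices to evaluate $K_{\widetilde{M}^1(v)}$ on one contracted curve. I would take the fibre $\ell=L_a\cong\mathbb{P}^1$ over a point $I_W\in B\setminus B_2$ with $W=\{a,b\}$, $a\in C$, $b\notin C$, which by Lemma \ref{determine fiber} (2) parametrises $\{I_{p,q} : f(p)=a,\ f(q)=b\}$. Because $q\notin D$, none of these subschemes lies in $\Hilb^2(D/C)$, so $\ell$ is contained in the open locus where $\widetilde{M}^1(v)\cong\Hilb^2(X)$; and since $p,q$ are always distinct, $\ell$ also avoids the Hilbert--Chow diagonal. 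Hence $K_{\widetilde{M}^1(v)}\cdot\ell=K_{\Hilb^2(X)}\cdot\ell$, and using $\Hilb^2(X)=\Bl_{\Delta_X}\Sym^2(X)$ together with the double cover $X\times X\to\Sym^2(X)$ (\'etale along $\ell$, as $\ell$ avoids the diagonal) this reduces to a computation on $X$: with $q$ fixed, $\ell$ lifts to $\{(p,q) : p\in L_a\}$, and from $K_{X\times X}=\mathrm{pr}_1^{*}K_X+\mathrm{pr}_2^{*}K_X$ we get $K_{\widetilde{M}^1(v)}\cdot\ell=K_X\cdot L_a$. Finally $K_X=f^{*}K_Y+D$ and $\mathcal{O}_X(D)|_{L_a}=\mathcal{O}_{L_a}(-1)$ give $K_X\cdot L_a=f^{*}K_Y\cdot L_a+D\cdot L_a=0-1=-1<0$. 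Thus $-K_{\widetilde{M}^1(v)}$ is positive on the generator of the relative cone of curves, i.e.\ $\xi_{0}$-ample, and $\xi_{0}$ is the contraction of the $K$-negative extremal ray $\mathbb{R}_{\geq 0}[\ell]$.

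The only genuinely delicate point is this third step, and within it the bookkeeping that the chosen fibre $\ell$ meets neither the flipped locus $\Hilb^2(D/C)$ nor the Hilbert--Chow diagonal, which is precisely what lets me transport the intersection number first to $\Hilb^2(X)$ and then to the single curve $L_a\subset X$. An alternative that sidesteps some of this would be to compute $K_{\widetilde{M}^1(v)}\cdot\ell$ directly on the common resolution $M=\Bl_{\Hilb^2(D/C)}\Hilb^2(X)$ using the normal bundle of Lemma \ref{normal bundle}, but the reduction to $X$ above appears shorter and more transparent.
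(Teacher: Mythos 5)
Your proposal is correct and follows essentially the same route as the paper: both use Lemma \ref{relative pic} to reduce to checking $K$-negativity on a single contracted curve, choose the fibre $\{I_{p,q}: p\in L_{y}\}$ over a point of $B\setminus B_{2}$, observe that it avoids the Hilbert--Chow exceptional locus (and the flipped locus), and compute $K_{\widetilde{M}^1(v)}\cdot \ell = K_{X\times X}\cdot(L_{y}\times\{q\}) = K_{X}\cdot L_{y} = -1$. Your additional verification of the contraction property via Zariski's main theorem is a welcome expansion of a point the paper leaves implicit.
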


\begin{proof}
By Lemma \ref{relative pic}, 
we know that $\xi_{0}$ is 
the contraction of an 
extremal ray $R$. 
Hence it is enough to compute 
the intersection number 
$K_{\widetilde{M}^1(v)}. f$ 
for one element $f \in R$. 
Fix $y \in C, \  q \in X \setminus D$ 
and put 
$f:=\left\{
I_{p, q} : p \in L_{y}
\right\} \in R$. 
Since $f$ does not intersects with 
the exceptional divisor of 
the Hilbert-Chow morphism 
$\Hilb^2(X) \to \Sym^2(X)$, 
we have 
\[
K_{\widetilde{M}^1(v)}. f 
=K_{\Hilb^2(X)}. f 
=K_{X \times X}. 
\left(
L_{y} \times \{q\}
\right)
=-1. 
\]
\end{proof}

\begin{rmk}
\label{not grassmann}
When $n=2$, 
the locus 
$\Hilb^2(C) = \emptyset$. 
Hence all the fibers of $\xi_{0}$ 
are $\mathbb{P}^1$. 
In general, Nakajima and Yoshioka 
shows that every fibre of 
the zig-zag diagram (\ref{zig-zag}) 
is the Grassmann variety 
(see Theorem \ref{ny}). 

On the other hand, for $n \geq 3$, 
we have shown that 
$\mathbb{P}^1 \times \mathbb{P}^1$ 
and $\mathbb{P}(1, 1, 2)$ appear 
as the fibers of $\xi_{0}$. 
Of course, they are not Grassmann variety. 
Furthermore, 
$\mathbb{P}(1,1,2)$ is even singular. 
This shows that for $n \geq 3$, 
more complicated fibers appear in 
the zig-zag diagram. 
\end{rmk}

\begin{ACK}
I would like to thank my supervisor 
Professor Yukinobu Toda, who 
gave me various advices and comments. 
I would also like to thank 
Takeru Fukuoka, Yousuke Matsuzawa, 
Masaru Nagaoka, and Genki Ouchi 
for fruitful discussions, 
and Wahei Hara, Yuki Hirano
for reading the draft version of this article 
and giving useful comments. 

This work was supported by the program 
for Leading Graduate Schools, 
the Ministry of Education,Culture,Sports,Science and Technology, Japan;  
and 
Grant-in-Aid for Japan Society for the Promotion of Science Research Fellow [17J00664].
\end{ACK}

\end{document}